\title{Rates of convergence for the continuum limit of nondominated sorting}
\author{Brendan Cook \and Jeff Calder}
\address[B.~Cook, J.~Calder]{School of Mathematics, University of Minnesota}
\email{cookx932@umn.edu}
\email{jcalder@umn.edu}
\thanks{B.~Cook and J.~Calder were supported by NSF-DMS grant 1713691 and a University of Minnesota Grant in Aid award.}
\newcommand{\Z}{\mathbb{Z}}
\newcommand{\R}{\mathbb{R}}
\newcommand{\C}{\mathcal{C}}
\newcommand{\N}{\mathbb{N}}
\renewcommand{\O}{{\mathcal O}}
\newcommand{\E}{\mathbb{E}}
\newcommand{\s}{\mathcal{S}}
\newcommand{\p}{\mathbb{P}}
\newcommand{\vare}{\varepsilon}
\newcommand{\ov}[1]{\overline{#1}}
\newcommand{\uv}[1]{\underline{#1}}
\newcommand{\set}[1]{\left\{#1 \right\}}
\newcommand{\dif}[1]{\text{ }d #1}
\newcommand{\abs}[1]{\left\lvert#1\right\rvert}
\newcommand{\norm}[1]{\left\lVert#1\right\rVert}
\newcommand{\rhomin}{\rho_{\textrm{min}}}
\newcommand{\rhomax}{\rho_{\textrm{max}}}
\newcommand{\Czeroone}{C^{0,1}(\R^d)}
\newcommand{\Ctwo}{C^2(\R^d)}
\DeclareMathOperator{\dist}{dist}
\DeclareMathOperator{\argmin}{argmin}
\DeclareMathOperator{\USC}{USC}
\DeclareMathOperator{\LSC}{LSC}
\newtheorem{lemma}{Lemma}[section]
\newtheorem{proposition}{Proposition}[section]
\newtheorem{theorem}{Theorem}[section]
\newtheorem{remark}{Remark}[section]
\newtheorem{corollary}{Corollary}[section]
\newtheorem{definition}{Definition}[section]
\numberwithin{equation}{section}
\numberwithin{theorem}{section}
\numberwithin{proposition}{section}
\def\XXint#1#2#3{{\setbox0=\hbox{$#1{#2#3}{\int}$ }
\vcenter{\hbox{$#2#3$ }}\kern-.6\wd0}}
\begin{document}
\begin{abstract}
    Nondominated sorting is a discrete process that sorts points in Euclidean space according to the coordinatewise partial order, and is used to rank feasible solutions to multiobjective optimization problems. It was previously shown that nondominated sorting of random points has a Hamilton-Jacobi equation continuum limit. We prove quantitative error estimates for the convergence of nondominated sorting to its continuum limit Hamilton-Jacobi equation.  Our proof uses the maximum principle and viscosity solution machinery, along with new semiconvexity estimates for domains with corner singularities.
\end{abstract}
\maketitle

\section{Introduction}\label{sec:introduction}
The sorting of multivariate data is an important problem in many fields of applied science \cite{calder2014}. \textit{Nondominated sorting} is a discrete process that is widely applied in multiobjective optimization and can be interpreted as arranging a finite set of points in Euclidean space into layers according to the coordinatewise partial order. 
Let $\leq$ denote the coordinatewise partial order on $\R^d$ given by
\begin{align*}
    x\leq y \iff x_i \leq y_i \text{ for all } i=1,\ldots , d.
\end{align*}
Given a set of distinct points $X = \set{X_1\ldots ,X_n} \subset \R^d$, let $\mathcal{F}_1$ denote the subset of points that are coordinatewise minimal. The set $\mathcal{F}_1$ is called the \textit{first Pareto front}, and the elements of $\mathcal{F}_1$ are called \textit{Pareto-optimal} or \textit{nondominated}. In general, the $k$-th Pareto front is defined by
\begin{align*}
    \mathcal{F}_k = \text{Minimal elements of } X \setminus \bigcup_{j<k} \mathcal{F}_j ,
\end{align*}
and \textit{nondominated sorting} is the process of sorting a given set of points by Pareto-optimality. A multiobjective optimization problem involves identifying from a given set of feasible solutions those that minimize a collection of objective functions. In the context of multiobjective optimization, the $d$ coordinates of a point to be sorted are the values of the $d$ objective functions on a given feasible solution, and nondominated sorting provides an effective ranking of all feasible solutions. Nondominated sorting and multiobjective optimization are widely used in science and engineering disciplines \cite{deb2002, ehrgott2005}, particularly to control theory and path planning \cite{kumar2010, mitchell2003}, gene selection \cite{fleury2002, hero2003}, clustering \cite{handl2005}, anomaly detection \cite{hsiao2015b, hsiao2012}, and image processing \cite{mumford1989, chan2001, hsiao2015}.

Set $\R^d_+ = \set{x\in \R^d: x_i > 0 \text{ for } i=1,\ldots ,d}$ and define the \textit{Pareto-depth function} $U_n = \sum_{j=1}^n \mathbbm{1}_{P_j}$ where $P_j = \set{x\in \R^d_+ : x\geq y \text{ for some } y\in \mathcal{F}_j}$. It was shown in \cite{calder2014} that if the $X_i$ are \textit{i.i.d.}~random variables on $\R^d_+$ with density $\rho$, then $n^{-1/d} U_n \to \frac{c_d}{d} u$ almost surely in $L^\infty(\R^d)$ as $n\to \infty$ where $u$ is the unique nondecreasing viscosity solution of the problem
\begin{equation}\label{eq:PDE}
\left\{\begin{aligned}
u_{x_1}u_{x_2}\ldots u_{x_d}  &=  \rho&&\text{in }\R^d_+\\ 
u &= 0 &&\text{on }\partial \R^d_+.
\end{aligned}\right.
\end{equation}
and $c_d$ is a constant. This result shows that nondominated sorting of large datasets can be approximated by solving a partial differential equation numerically. This idea was developed further by Calder et al. in \cite{calder2015PDE} which proposed a fast approximate algorithm for nondominated sorting called \textit{PDE-based ranking} based on estimating $\rho$ from the data and solving the PDE numerically. It was shown in \cite{calder2015PDE} that PDE-based ranking is considerably faster than nondominated sorting in low dimensions while maintaining high sorting accuracy.

In this paper, we establish rates of convergence for the continuum limit of nondominated sorting. This is an important result in applications of PDE-based ranking \cite{abbasi2017, hsiao2015b} where it is important to consider how the error scales with the size $n$ of the dataset. The problem has several features that complicate the proof. The Hamiltonian $H(p) = p_1 \ldots p_d$ is not coercive, which is the standard property required to prove Lipschitz regularity of viscosity solutions \cite{bardi1997}. If one takes a $d$th root of the PDE to replace the Hamiltonian with $H(p) = (p_1 \ldots p_d)^{1/d}$, we obtain a concave $H$ at the cost of losing local Lipschitz regularity. In particular, solutions of \eqref{eq:PDE} are neither semiconcave nor semiconvex in general. Furthermore, $u$ is not Lipschitz due to the lack of boundary smoothness and coercivity. Our proof approximates the solution to \eqref{eq:PDE} by the solution to the auxiliary problem
\begin{equation}\label{eq:PDEaux}
\left\{\begin{aligned}
u_{x_1}u_{x_2}\ldots u_{x_d}  &=  \rho&&\text{in } \Omega_R\\ 
u &= 0 &&\text{on }\partial_R \Omega,
\end{aligned}\right.
\end{equation}
where $\Omega_R = \set{x\in [0,1]^d: (x_1\ldots x_d)^{1/d} > R}$ and $\partial_R \Omega = \set{x\in [0,1]^d : (x_1\ldots x_d)^{1/d} = R}$, effectively rounding off the corner singularity. We prove a one-sided convergence rate for the auxiliary problem restricted to the box $[0,1]^d$ by using an $\inf$-convolution to approximate $u$ by semiconcave functions that solve \eqref{eq:PDEaux} approximately. We apply the convergence rates for the longest chain problem proved in \cite{bollobas1992height} to obtain rates that hold with high probability on a collection of simplices, which are essentially cell-problems from homogenization theory. The remainder of the argument builds off of the proof in \cite{calder2016direct} but keeping track quantitatively of all sources of error. 

We also prove new semiconvexity results on the corner domain $\R^d_+$, which bound the blowup rate of the semiconvexity constant of $u$ at the boundary. The semiconvex regularity of $u$ on the auxiliary domain enables us to avoid use of a sup-convolution approximation for this direction, bolstering the convergence rate. The proof uses a closed-form asymptotic expansion to obtain a smooth approximate solution to \eqref{eq:PDEaux} near the boundary, and computes semiconvexity estimates for the approximation analytically. We believe this argument is new, as the typical arguments found in the literature for proving semiconvexity near the boundary proceed by means of vanishing viscosity \cite{bardi1997}. We also extend the semiconvexity estimates to the full domain with a doubling variables argument which is new and simpler compared to the standard tripling variables approach \cite{bardi1997}.

Our convergence rate proof is at a high level similar to the proofs of convergence rates for stochastic homogenization of Hamilton-Jacobi equations in \cite{armstrong2014error}, which uses Azuma's inequality to control fluctuations and a doubling variables argument to prove convergence rates. Apart from the viscosity solution theory, the main machinery we use is the convergence rate for the longest chain problem proved by Bollob\'as and Brightwell in \cite{bollobas1992height}, whose proof is also based on Azuma's inequality. As our PDE is first-order, our approach uses the $\inf$ convolution instead of a doubling variables argument which leads to an equivalent but somewhat simplified argument.

As described in \cite{calder2016direct}, this continuum limit result can be viewed in the context of stochastic homogenization of Hamilton-Jacobi equations. One may interpret $U_n$ as the discontinuous viscosity solution of 
\begin{equation}\label{eq:UnPDE}
\left\{\begin{aligned}
U_{n,x_1}U_{n,x_2}\ldots U_{n,x_d}  &=  \sum_{j=1}^n \delta_{X_j} &&\text{in }\R^d_+\\ 
U_n &= 0 &&\text{on }\partial \R^d_+.
\end{aligned}\right.
\end{equation}
The sense in which $U_n$ solves the PDE (\ref{eq:UnPDE}) is not obvious. By mollifying $U_n$, one obtains a sequence $U_n^\vare$ of approximate solutions to (\ref{eq:UnPDE}). It can be shown that  $U_n^\vare$ converges pointwise to $CU_n$ as $\vare \to 0$ where the constant $C$ depends on the choice of mollification kernel. 

Our proof techniques may also be applicable to several other related problems in the literature. The convex peeling problem studied in \cite{calder2020convex} bears many similarities to our problem, and similar ideas may give convergence rates for the convex peeling problem, provided the solutions of the continuum PDE are sufficiently smooth. The papers \cite{thawinrak2017high, calder2017numerical} introduce numerical methods for the PDE (\ref{eq:PDE}) and prove convergence rates. Our semiconvex regularity results could be used to improve the convergence rates of the above papers to $O(h)$ in one direction. We also suspect the methods used in our paper could be adapted to the directed last passage percolation problem studied in \cite{calder2015directed}.

We also briefly note that nondominated sorting is equivalent to the problem of finding the length of a longest chain (i.e. a totally ordered subset) in a partially ordered set, which is a well-studied problem in the combinatorics and probability literature \cite{ulam1961, hammersley1972, bollobas1988, deuschel1995}. In particular, $U_n(x)$ is equal to the length of a longest chain in $X$ consisting of points less than $x$ in the partial order.

\section{Main results}\label{sec:results}
We begin by introducing definitions and notation that will be used throughout the paper. In our results and proofs, we let $C$ denote a constant that does not depend on any other quantity, and $C_k$ denotes a constant dependent on the variable $k$. Be advised that the precise value of constants may change from line to line. To simplify the proofs, we model the data using a Poisson point process. Given a nonnegative function $\rho \in L^1(\R^d)$, we let $X_\rho$ denote a Poisson point process with intensity function $\rho$. Hence, $X_\rho$ is a random, at most countable subset of $\R^d$ with the property that for every Borel measurable set $A\subset \R^d$, the cardinality $N(A)$ of $A \cap X_\rho$ is a Poisson random variable with mean $\int_A \rho \dif{x}$. Given two measurable disjoint sets $A,B  \subset \R^d$, the random variables $N(A)$ and $N(B)$ are independent. Further properties of Poisson processes can be found in \cite{kingman1992poisson}. In this paper we consider a Poisson point process $X_{n\rho}$ where $n\in \N$ and $\rho \in C(\R^d)$ satisfies
\begin{align}\label{eq:rhobounds}
    0 < \rhomin \leq \rho \leq \rhomax.
\end{align}
We denote by $C_\rho$ a constant depending on $\rhomin$ and $\rhomax$, and possibly also on $[\rho]_{C^{0,1}(\R^d)}$ and $\norm{D^2 \rho}_{L^\infty(\R^d)}$ in those results that assume $\rho \in \Czeroone$ and $\rho \in \Ctwo$ respectively. Given $R\geq 0$, we define
\begin{equation*}
    \Omega_{R} = \set{x\in [0,1]^d: (x_1\ldots x_d)^{1/d} > R}
\end{equation*}
and
\begin{equation*}
    \partial_{R} \Omega = \set{x\in [0,1]^d: (x_1\ldots x_d)^{1/d} = R}.
\end{equation*}
Let $u$ denote the viscosity solution of \eqref{eq:PDEaux}. Given a finite set $A \subset \R^d$, let $\ell(A)$ denote the length of the longest chain in the set $A$. Given a domain $\Omega \subset \R^d$, the Pareto-depth function $U_n$ in $\Omega$ is defined by
\begin{equation*}
    U_n(x) = \ell([0,x] \cap X_{n\rho} \cap \Omega)
\end{equation*}
where $[0,x] := [0,x_1]\times \ldots \times [0,x_d]$. The scaled Pareto-depth function is defined by 
\begin{align}\label{eq:scaledDepth}
    u_n(x) = \frac{d}{c_d} n^{-1/d}U_n(x)
\end{align}
where $c_d$ is the constant defined by
\begin{align}\label{eq:definitionofcd}
    c_d = \lim_{n\to \infty} n^{-1/d}\ell([0,1]^d \cap X_n) \text{ a.s. }
\end{align}
For a subset $S \subset \R^d$ , we write $u_n(S)$ to denote $\frac{d}{c_d} n^{-1/d}\ell(S \cap X_{n\rho})$. This particular scaling is chosen to eliminate the constant on the right-hand side of \eqref{eq:PDEsemi}.
\begin{remark}
There are several results regarding the constant $c_d$ that have been established in the literature. Hammersley showed that $\lim_{n\to \infty} n^{-1/2}\ell(X_n \cap [0,1]^2) = c \text{ a.s. }$ and conjectured that $c = 2$ in \cite{hammersley1972}. In subsequent works, Logan and Shepp \cite{logan1977} and Vershik and Kerov \cite{vershik1977} showed that $c\geq 2$ and $c\leq 2$. The exact values of $c_d$ for $d > 2$ remain unknown, although Bollob\'as and Winkler showed in \cite{bollobas1988} that
\[
\frac{d^2}{d!^\frac{1}{d} \Gamma\left(\frac{1}{d}\right)} \leq c_d < e {\rm \ \ for \ all \ }d\geq 1.
\]
\end{remark}
Now we state our main convergence rate results. Let $u_n$ denote the Pareto-depth function in $\Omega_R$ and let $u$ denote the viscosity solution of \eqref{eq:PDEaux}.
\begin{theorem}\label{thm:mainaux}
Given $k\geq 1$ and $\rho \in C^{0,1}(\R^d)$ satisfying \eqref{eq:rhobounds}, the following statements hold.
\begin{enumerate}
\item[(a)]
Given $R\in (0,1]$, and $n^{1/d} \geq C_{d,k,\rho} R^{-(2d^2 - d - 1)}$ we have
\begin{align*}
\p \left( \sup_{\Omega_R} \left(u_n - u \right) > C_{d,\rho, k} R^{\frac{-2d^2 + d + 1}{4}} n^{-1/4d} \left(\frac{\log^{2} n}{  \log \log n} \right)^{1/2} \right) \leq C_{d,\rho, k} R^{-C_d}  n^{-k}.
\end{align*}

\item[(b)]
Assume $\rho \in C^2(\R^d_+)$. Then there exists $C_d>0$ such that for all $R\in (0,C_d)$ and $n^{1/d} \geq C_{d,k,\rho} R^{-2d^2 + 4d - 4}\log(n)^C$ we have
\begin{align*}
\p \left( \sup_{\Omega_R} \left(u - u_n \right) > C_{d,\rho, k} R^{\frac{-2d^2 + d}{3}} n^{-1/3d} \left(\frac{\log^{2} n}{  \log \log n} \right)^{2/3} \right) \leq C_{d,\rho, k} R^{-C_d} n^{-k}.
\end{align*}
\end{enumerate}
\end{theorem}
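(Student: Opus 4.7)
The plan is to reduce the problem at each point $x_0\in\Omega_R$ to a local estimate on the longest chain in a thin tangent simplex at $x_0$, aligned with the gradient of a smooth approximation of $u$, and to apply the Bollob\'as--Brightwell concentration rate for longest chains \cite{bollobas1992height} to control the fluctuation. The normalization $u_n=(d/c_d)n^{-1/d}U_n$ is designed so that the rescaling $y_i=p_ix_i$ with $p_1\cdots p_d=\rho$ from the PDE, together with the Bollob\'as--Brightwell asymptotic $\ell\sim c_d(n\rho|B|)^{1/d}$ and the identity $p\cdot x_0\approx u(x_0)$ for the locally linearized problem, produces $u_n(x_0)\approx u(x_0)$ at leading order; both halves of the theorem amount to quantifying the linearization and fluctuation errors in this correspondence.

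For the upper bound in (a), I would smooth $u$ via the inf-convolution
\[
u^\vare(x)=\inf_{y\in\Omega_R}\bigl\{u(y)+|x-y|^2/(2\vare)\bigr\},
\]
which is $1/\vare$-semiconcave, satisfies $u-C\Lip(u)^2\vare\leq u^\vare\leq u$, and remains an approximate viscosity subsolution of \eqref{eq:PDEaux}. Fix a net $N\subset\Omega_R$ with mesh size a small negative power of $n$. At each $x_0\in N$ the supergradient $p=Du^\vare(x_0)$ satisfies $p_1\cdots p_d\gtrsim \rho(x_0)$, and the semiconcave upper tangent plane for $u^\vare$ bounds $U_n(x_0)$ from above by $\ell(S\cap X_{n\rho})$, where $S=\{x\in[0,x_0]:p\cdot x\geq p\cdot x_0-\eta\}$ is a thin tangent slab of width $\eta$. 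Rescaling by $y_i=p_ix_i$ maps $S$ onto a standard simplex with Poisson intensity $\approx n$, and Bollob\'as--Brightwell then concentrates $\ell(S\cap X_{n\rho})$ around $(c_d/d)(p\cdot x_0)n^{1/d}$ with fluctuations controlled by $(\log n/\log\log n)^{1/2}$ at the correct scale. A union bound over $N$, optimization of $\vare$ and $\eta$, and a careful treatment of the boundary layer near $\partial_R\Omega$ then yield the rate stated in (a).

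For the lower bound in (b), I would avoid a second smoothing step and instead use the new semiconvexity estimates for $u$ on $\Omega_R$ established elsewhere in the paper, which give a semiconvexity constant $C_R=O(R^{-c})$ for $u$. These estimates provide $u(x)\geq u(x_0)+Du(x_0)\cdot(x-x_0)-(C_R/2)|x-x_0|^2$ almost everywhere on $\Omega_R$, together with $\prod_i \partial_i u(x_0)=\rho(x_0)$ from the PDE. To bound $u_n(x_0)$ from below, the same tangent-slab construction and Bollob\'as--Brightwell argument as in (a) produce a long chain in $S$ of length $\approx(c_d/d)(p\cdot x_0)n^{1/d}$, and the semiconvex lower bound identifies this quantity with $u(x_0)$ modulo the \emph{quadratic} defect $C_R\eta^2$. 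Balancing this quadratic error against the Bollob\'as--Brightwell fluctuation yields a strictly better optimum than the \emph{linear} $\Lip(u)^2\vare$ defect from (a), which is the origin of the improved rate $n^{-1/3d}$.

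The main obstacle is tracking the $R\to 0$ degeneracy uniformly: the Lipschitz constant of $u$, the semiconvexity constant $C_R$, the gradient magnitudes $|p|$, and the lower bound on $p\cdot x_0$ all blow up at the corner of $\R^d_+$, and the slab width $\eta$, the inf-convolution scale $\vare$, and the net spacing must be chosen as specific negative powers of $R$ and $n$ so that (i) every tangent simplex still contains the polylogarithmic number of points required for Bollob\'as--Brightwell concentration, (ii) the union bound over $N$ is summable to $n^{-k}$, and (iii) the accumulated errors balance optimally to produce the stated exponents. A secondary technical issue is verifying that $u^\vare$ is an approximate subsolution on a slightly shrunken subdomain so that the boundary condition on $\partial_R\Omega$ does not spoil the comparison.
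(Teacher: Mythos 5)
You have the right ingredients: inf-convolution to manufacture a semiconcave approximate supersolution for (a), the semiconvexity estimate of Theorem \ref{thm:mainsemiconvexity} in place of a sup-convolution for (b), the Bollob\'as--Brightwell cell-problem rate together with a union bound over a mesh of simplices, and the $R$-dependent optimization that correctly explains why the quadratic semiconvexity defect in (b) beats the extra inf-convolution smoothing error in (a). These are exactly the tools the paper uses. However, the local mechanism you propose at each net point does not work as stated, and it omits the maximum-principle structure that the proof actually rests on. The claim that the semiconcave tangent plane of $u^\vare$ at $x_0$ bounds $U_n(x_0)$ from above by $\ell(S\cap X_{n\rho})$ for a thin tangent slab $S=\set{x\in[0,x_0]:p\cdot x\geq p\cdot x_0-\eta}$ is false: a thin slab controls only the \emph{local increment} of $u_n$ near $x_0$, not the whole chain length $U_n(x_0)$. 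Taking $\eta$ all the way up to $p\cdot x_0$ makes $S=[0,x_0]$ and the bound tautological, but then a single linear rescaling cannot turn a variable intensity $\rho$ into a constant one, so the Bollob\'as--Brightwell box rate no longer applies.

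The paper's argument instead works at a \emph{maximum point} of $u_n-(1+\lambda)u_\alpha$ (and of $(1-\lambda)u-u_n$ for (b)), not at an arbitrary mesh point. At the touching point the viscosity inequality gives the set inclusion $A_n\subset A$ between the sublevel sets of $u_n$ and of the test function; Lemma \ref{lem:chaininAn} then shows $u_n(A_n)\geq\vare$; semiconcavity of $u_\alpha$ places $A$ inside a simplex $S_{x_n,p}$ on which Theorem \ref{thm:simplexrate} controls $u_n$; and the resulting inequality contradicts the strictness of $(1+\lambda)u_\alpha$ as a supersolution (Proposition \ref{prop:perturb}), forcing the maximum to occur near $\partial_R\Omega$, where Lemma \ref{lem:unbound} controls $u_n$ directly. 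The inclusion $A_n\subset A$ is precisely what your slab step needs, but it holds only at the touching point of $u_n$ against a test function, not at a generic net point; and without the multiplicative perturbation $(1+\lambda)$ there is no strictness to contradict. Your argument should therefore be recast as the comparison lemma (Lemma \ref{thm:maxnearboundary}), after which your error accounting for the parameters $\alpha,\vare,\delta$ and the $R$-blowup tracking would go through essentially as you describe.
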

Theorem \ref{thm:mainaux} depends on the parameters $R$ and $k$. Although $R$ is a constant in this result, we have stated the explicit dependence on $R$ as it is required to extend the rates from $\Omega_R$ to $\Omega_0$. Observe that the convergence rates become trivial as $R \to 0^{+}$, as the proof makes use of estimates for the Lipschitz constant and semiconvexity constant of $u$ on $\Omega_R$ that blowup as $R$ tends to $0$. Also observe that the convergence rate in (b) is sharper than in (a), thanks to our use of the semiconvexity estimates established in Theorem \ref{thm:mainsemiconvexity}. Let $v$ denote the solution of 
\begin{equation}\label{eq:PDEfull}
\left\{\begin{aligned}
v_{x_1}v_{x_2}\ldots v_{x_d}  &=  \rho&&\text{in }\Omega_0\\ 
v &= 0 &&\text{on } \partial_0 \Omega
\end{aligned}\right.
\end{equation}
In the next result we state our convergence rates on $\Omega_0 = [0,1]^d$ which are proved by using $u$ as an approximation to $v$ and setting $R$ equal to the optimal value that balances the approximation error term with the convergence rate. Let 
\begin{align}\label{eq:scaledDepthFull}
    v_n(x) = \frac{d}{c_d} n^{-1/d}\ell(X_{n \rho} \cap [0,x])
\end{align}
denote the scaled Pareto-depth function in $[0,1]^d$.
\begin{theorem}\label{thm:mainfull}
Given $k\geq 1$ and $\rho \in \Czeroone$ satisfying \eqref{eq:rhobounds}, the following statements hold.
\begin{enumerate}
\item[(a)]
For all $n > C_{d,k,\rho}$ we have
\begin{align*}
\p \left( \sup_{\Omega_0} \left(v_n - v \right) > n^{-1/(2d^3 + d^2 + 5d + 1)} \right) \leq C_{d,k,\rho} n^{-k}.
\end{align*}

\item[(b)]
Assume $\rho \in \Ctwo$. Then for all $n > C_{d,k,\rho}$ we have
\begin{align*}
\p \left( \sup_{\Omega_0} \left(v - v_n \right) >  n^{-1/(2d^3 - d^2 + 3d + 1)} \right) \leq C_{d,k,\rho} n^{-k}.
\end{align*}
\end{enumerate}
\end{theorem}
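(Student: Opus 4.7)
The plan is to use the auxiliary solution $u$ on $\Omega_R$ from Theorem \ref{thm:mainaux} as an approximation to $v$, and to choose $R = R(n)$ so as to balance the corner-rounding error against the convergence rate on $\Omega_R$. On $\Omega_R$ we decompose
\begin{align*}
v_n - v = (v_n - u_n) + (u_n - u) + (u - v),
\end{align*}
and analogously for $v - v_n$. The middle term is controlled by Theorem \ref{thm:mainaux}, and the outer two terms measure the \emph{continuum} and \emph{discrete} corner-rounding errors. On $\Omega_0 \setminus \Omega_R$ we instead estimate $v$ and $v_n$ directly.

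For the continuum approximation, the PDE $v_{x_1}\cdots v_{x_d} = \rho \leq \rhomax$ with $v = 0$ on $\partial_0 \Omega$ together with comparison yields $v \leq d\rhomax^{1/d}(x_1\cdots x_d)^{1/d}$, so in particular $v \leq d\rhomax^{1/d} R$ on $\partial_R \Omega$. Since $u + d\rhomax^{1/d} R$ solves the same Hamilton-Jacobi equation on $\Omega_R$ and dominates $v$ on $\partial_R \Omega$, the comparison principle gives
\begin{align*}
0 \leq v - u \leq C_\rho R \quad \text{on } \Omega_R,
\end{align*}
and the bound $v \leq C_\rho R$ also holds on $\Omega_0 \setminus \Omega_R$. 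For the discrete approximation, any chain in $[0,x]\cap X_{n\rho}$ splits at its last point in $\Omega_0 \setminus \Omega_R$ into a chain in the corner followed by a chain in $\Omega_R$, so
\begin{align*}
0 \leq v_n(x) - u_n(x) \leq \tfrac{d}{c_d} n^{-1/d} \ell\bigl(X_{n\rho} \cap (\Omega_0 \setminus \Omega_R)\bigr) \quad \text{for } x \in \Omega_R.
\end{align*}
Because $\Omega_0 \setminus \Omega_R$ is a down-set, its longest chain equals $\sup_{x \in \partial_R \Omega} \ell(X_{n\rho} \cap [0,x])$. Rescaling each rectangle $[0,x]$ with $x_1\cdots x_d = R^d$ to the unit cube and applying the Bollob\'as-Brightwell concentration inequality \cite{bollobas1992height} uniformly over a net on $\partial_R \Omega$ produces the bound $v_n - u_n \leq C_\rho R$ plus lower order terms, with probability at least $1 - C_{d,\rho,k} n^{-k}$. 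The same estimate controls $v_n$ on the corner.

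Combining these estimates with Theorem \ref{thm:mainaux}(a) for part (a) and Theorem \ref{thm:mainaux}(b) for part (b), we obtain with high probability bounds of the form
\begin{align*}
\sup_{\Omega_0}(v_n - v) \leq C_\rho R + C_{d,\rho,k}\, R^{(-2d^2 + d + 1)/4}\, n^{-1/(4d)} (\log n)^{C},
\end{align*}
and the analogue for $\sup_{\Omega_0}(v - v_n)$ using the exponent $(-2d^2 + d)/3$ and rate $n^{-1/(3d)}$ from Theorem \ref{thm:mainaux}(b). Choosing $R$ as the algebraic value equating the two terms, and then verifying that the hypotheses $n^{1/d} \geq C_{d,k,\rho} R^{-(2d^2 - d - 1)}$ in (a) and $n^{1/d} \geq C_{d,k,\rho} R^{-2d^2 + 4d - 4} \log(n)^C$ in (b) are satisfied and that the $R^{-C_d} n^{-k}$ probability factor can be absorbed into $n^{-k'}$ by taking $k$ large, yields the stated rates. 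The main technical obstacle is establishing the Bollob\'as-Brightwell concentration uniformly over the whole down-set $\Omega_0 \setminus \Omega_R$; this is handled by a $\delta$-net on $\partial_R \Omega$ of size $\delta^{-(d-1)}$ combined with a Poisson tail bound controlling the variation of $\ell([0,x])$ between net points.
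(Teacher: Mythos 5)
Your proposal is correct and follows essentially the same route as the paper: the identical three-term decomposition on $\Omega_R$, the comparison principle for the continuum corner error $0 \leq v - u \leq C_\rho R$ (Lemma 5.6(b)), the chain-splitting bound $v_n - u_n \leq \sup_{\Omega_0\setminus\Omega_R} v_n$ with a covering/net plus Bollob\'as--Brightwell to show this is $\leq C_\rho R$ with high probability (Lemma 5.5(a)), followed by balancing $R$ against the Theorem 2.2 rates. The only cosmetic differences are that the paper covers the corner region $\Omega_0\setminus\Omega_R$ by rectangles rather than taking a net on $\partial_R\Omega$, and that rather than ``equating the two terms'' it takes $\beta = 1/(2d^3+d^2+5d+1)$ strictly below the balance exponent $1/(2d^3+d^2+5d)$ so the polylogarithmic factors are absorbed into the choice of $R$ and the event probability stays $O(n^{-k})$ after enlarging $k$ --- a small but necessary adjustment to your ``equate and verify'' step.
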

Observe that the rate in (b) is sharper thanks to the sharper one-sided rate in Theorem \ref{thm:mainfull}. We do not know for certain whether the rates in Theorem \ref{thm:mainaux} and \ref{thm:mainfull} are optimal, although it seems likely that they are not.

These results also extend to the situation when $X_{n\rho} = \set{Y_1,\ldots ,Y_n}$ where $Y_1,\ldots ,Y_n$ are $\text{i.i.d.}$ random variables with continuous density $\rho$. The analogues of Theorems \ref{thm:mainaux} and \ref{thm:mainfull} in this context follow from Lemma \ref{lem:iidvariablebound}.
\begin{corollary}
Let $Y_1,\ldots ,Y_n$ be $\text{i.i.d.}$ random variables with density $\rho$. Then Theorems \ref{thm:mainaux} and \ref{thm:mainfull} hold when $X_{n\rho} = \set{Y_1, \ldots , Y_n}$.
\end{corollary}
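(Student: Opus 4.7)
The plan is a standard de-Poissonization transfer from the Poisson point process results to the i.i.d.\ setting, which should be the content of Lemma \ref{lem:iidvariablebound}. The key observation is that conditional on the event $\{|X_{n\rho}|=n\}$, the points of a Poisson point process $X_{n\rho}$ with intensity $n\rho$ are distributed exactly as $n$ i.i.d.\ samples from the probability density $\rho$ (assuming, as the hypothesis of the corollary requires, that $\rho$ is a probability density). Thus for any measurable event $A$ depending only on the point configuration,
\begin{equation*}
\p\bigl(\{Y_1,\ldots,Y_n\}\in A\bigr) \;=\; \p\bigl(X_{n\rho}\in A \,\big|\, |X_{n\rho}|=n\bigr) \;\leq\; \frac{\p(X_{n\rho}\in A)}{\p(|X_{n\rho}|=n)},
\end{equation*}
and Stirling's formula applied to $\p(|X_{n\rho}|=n)=e^{-n}n^n/n!$ yields the estimate $\p(|X_{n\rho}|=n)\geq c_0 n^{-1/2}$ for a universal constant $c_0>0$.

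Given this inequality, the two theorems transfer essentially verbatim. I would take $A$ to be the failure event of Theorem \ref{thm:mainaux}(a), namely $A=\{\sup_{\Omega_R}(u_n-u)>\varepsilon_n\}$ with $\varepsilon_n$ the stated rate, and analogously for part (b) and for both parts of Theorem \ref{thm:mainfull}. Since the Poisson-model tail bounds $C_{d,\rho,k}R^{-C_d}n^{-k}$ hold for \emph{every} $k\geq 1$, I would apply the corresponding Poisson result with $k'=k+1$ in place of $k$. Multiplying the resulting bound $O(R^{-C_d}n^{-(k+1)})$ by the $O(n^{1/2})$ de-Poissonization factor absorbs the loss into a new multiplicative constant and yields a tail of the required form $C_{d,\rho,k}R^{-C_d}n^{-k}$. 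Crucially, the convergence rate $\varepsilon_n$ itself is not altered by this procedure; only the constant in front of the probability bound changes.

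The only minor bookkeeping point is to check that the lower-bound conditions on $n$ in the Poisson statements, e.g.\ $n^{1/d}\geq C_{d,k,\rho}R^{-(2d^2-d-1)}$, are preserved under the shift $k\mapsto k+1$; this is immediate since these conditions depend on $k$ only through a multiplicative constant that can be absorbed into $C_{d,k,\rho}$. I do not anticipate any real obstacle here: once Lemma \ref{lem:iidvariablebound} is in hand, the proof is a one-line invocation of the appropriate Poisson theorem with an incremented exponent, together with the measurability of the failure events (which follows from the fact that $u_n$ is determined by longest-chain lengths in deterministic rectangles and $u$ is deterministic). The genuine mathematical content lives entirely in the Poisson versions already proved.
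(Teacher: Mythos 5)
Your proposal is correct and takes essentially the same approach as the paper: the paper's Lemma~\ref{lem:iidvariablebound} is exactly the de-Poissonization transfer you describe (Poissonize the sample size, extract the $k=n$ term, apply Stirling to get the $e\sqrt{n}$ factor), and the corollary then follows by invoking the Poisson-model theorems with $k$ incremented by one to absorb the polynomial loss. Your "condition on $|X_{n\rho}|=n$" phrasing and the paper's mixture-decomposition phrasing are the same computation, and your bookkeeping remarks about the $n$-lower-bound conditions and constant absorption match the paper's intent.
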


A key step in our proof of the sharper one-sided rate is a quantitative estimate on the semiconvexity constant of $u$. As the Hamiltonian $H(p) = (p_1\ldots p_d)^{1/d}$ is concave, the results on semiconvex viscosity solutions in \cite{bardi1997} would lead us to suspect that $u$ is semiconvex. However, from an examination of the function $w(x) = d(x_1\ldots x_d)^{1/d}$ that solves (\ref{eq:PDE}) with $\rho=1$, it is evident that solutions of (\ref{eq:PDE}) on $\R^d_+$ need not be semiconvex nor semiconcave due to the gradient singularity on the coordinate axes. This motivates us to determine the rate at which the semiconvexity constant of $u$ on $\Omega_R$ blows up as $R \to 0^{+}$. For proving these results it is convenient to raise the PDE to the $1/d$ power and pose the Dirichlet problem on the more general domains $\Omega_{R,M} = \set{x\in [0,M]^d: (x_1\ldots x_d)^{1/d} > R}$ with boundary conditions on $\partial_{R,M} \Omega = \set{x\in [0,M]^d: (x_1\ldots x_d)^{1/d} = R}$. Let $R>0$, $M\geq 1$, and let $u$ denote the solution of
\begin{equation}\label{eq:PDEsemi}
\left\{\begin{aligned}
(u_{x_1}u_{x_2}\ldots u_{x_d})^{1/d}  &=  \rho^{1/d}&&\text{in }\Omega_{R,M}\\
u &= 0 &&\text{on } \partial_{R,M} \Omega.
\end{aligned}\right.
\end{equation}
Our result on semiconvexity bounds the rate at which the semiconvexity constant of $u$ on $\Omega_{R,M}$ blows up as $R$ tends to $0$. This result enables us to establish the sharpened one-sided convergence rates in case (b) of Theorem \ref{thm:mainaux} and Theorem \ref{thm:mainfull}.
\begin{theorem}\label{thm:mainsemiconvexity}
    Let $u$ denote the solution to (\ref{eq:PDEsemi}). Then there exists a constant $C_\rho > 0$ such that for all $R \leq C_\rho$, $x\in \Omega_{R,M}$, and $h\in \R^d$ such that $x\pm h \in \Omega_{R,M}$ we have
    \begin{align*}
        u(x+h) - 2u(x) + u(x-h) \geq -C_{d,M,\rho} R^{-2d+1} \abs{h}^2
    \end{align*}
    where 
    \begin{align*}
        C_{d,M,\rho} = C_d(1 + M \rhomax^{1/d}) \left(\rhomin^{-(d-1)/d}\norm{D\rho}_{L^\infty(\partial_{R,M}\Omega)} M^{2d-1} + \rhomax^{1/d} M^{2d-2} \right).
    \end{align*}
\end{theorem}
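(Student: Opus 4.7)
The plan splits into a boundary-layer estimate obtained from an explicit asymptotic expansion and an interior propagation by a doubling-variables argument that exploits concavity of $H(p) = (p_1 p_2 \cdots p_d)^{1/d}$ on the positive orthant. For the boundary layer, set $f(x) = (x_1 \cdots x_d)^{1/d}$ and take the ansatz
$$v(x) := d\,\rho(x)^{1/d}\bigl(f(x) - R\bigr),$$
which vanishes on $\partial_{R,M}\Omega$. Using $\partial_i f = f/(d x_i)$, a direct calculation gives $v_{x_i} = \rho^{1/d}f/x_i + O(f-R)$ and hence $(v_{x_1}\cdots v_{x_d})^{1/d} = \rho^{1/d} + O(f-R)$, so $v$ solves the PDE classically up to an error vanishing on the boundary. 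The Hessian of $v$ has entries of order $f/(x_i x_j)$ for $i\neq j$ and $-(d-1)f/(d x_i^2)$ on the diagonal; the constraints $(x_1\cdots x_d)^{1/d} \geq R$ and $x_i \leq M$ force $x_i \geq R^d/M^{d-1}$ on $\Omega_{R,M}$, so that $D^2 v \geq -C_{d,M,\rho} R^{-2d+1} I$ with the $M^{2d-2}$ and $M^{2d-1}\|D\rho\|_{L^\infty}$ terms in the theorem arising from the $v_{x_i x_j}$ and $\rho^{1/d}$-derivative contributions, respectively. Adding a next-order correction in powers of $f - R$ improves the PDE residual, and a viscosity comparison between $u$ and the refined ansatz (perturbed by $\pm \delta (f-R)$) transfers the Hessian lower bound from $v$ to $u$ in a boundary layer of definite width.

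For the interior, define
$$\Psi(x,h) := 2u(x) - u(x+h) - u(x-h) - C_0|h|^2, \qquad C_0 := C_{d,M,\rho}R^{-2d+1},$$
on the set where $x, x \pm h \in \Omega_{R,M}$; the theorem reduces to $\Psi \leq 0$. By the boundary-layer step, $\Psi \leq 0$ whenever any of $x, x+h, x-h$ lies in that layer, so if $\sup \Psi > 0$ the supremum is attained (after a small quadratic penalty in $(x,h)$ to ensure attainment) at an interior pair $(x_*, h_*)$. The novelty is to treat $(x, h)$ as the two variables of a doubling argument, rather than introducing a third penalty variable at the midpoint as in the classical tripling approach of \cite{bardi1997}; the first-order optimality conditions in $x$ and $h$ couple the three gradients $Du(x_*), Du(x_* \pm h_*)$ through the relations $Du(x_* \pm h_*) = Du(x_*) \mp C_0 h_* + o(1)$. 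Concavity of $H$ at $Du(x_*)$ then yields
$$H(Du(x_*+h_*)) + H(Du(x_*-h_*)) - 2H(Du(x_*)) \leq -\lambda C_0^2 |h_*|^2,$$
with $\lambda$ controlled by $\rhomin, \rhomax, M$, and substituting the PDE $H(Du) = \rho^{1/d}$ gives
$$\lambda C_0^2 |h_*|^2 \leq 2\rho^{1/d}(x_*) - \rho^{1/d}(x_*+h_*) - \rho^{1/d}(x_*-h_*) \leq \|D^2 \rho^{1/d}\|_{L^\infty} |h_*|^2,$$
a contradiction for $C_{d,M,\rho}$ chosen large enough. All viscosity manipulations at $x_* \pm h_*$ are carried out on penalty-regularized versions of $\Psi$ in the standard way.

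The chief obstacle is the boundary-layer step: $v$ only approximately solves the PDE, and to preserve the sharp $R^{-2d+1}$ Hessian constant the comparison between $u$ and the corrected ansatz must control not merely $L^\infty$ closeness but second-difference closeness, particularly near the corners where some $x_i$ approaches $R^d/M^{d-1}$. A secondary obstacle in the interior step is that $D^2 H$ is degenerate along the gradient direction $Du$ itself, so the quantitative concavity constant $\lambda$ must be obtained orthogonally to $Du$ and then combined with the coupling relation for $h_*$; this case analysis (degenerate vs.\ transverse directions) is what keeps the doubling argument strictly simpler than tripling while still yielding a clean contradiction.
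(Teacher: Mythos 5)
Your boundary-layer plan is the right idea and matches the paper's strategy in spirit: both use an explicit near-boundary ansatz and comparison to transfer a Hessian bound to $u$. You correctly identify that the naive ansatz $v = d\rho^{1/d}(f-R)$ must be corrected to higher order --- indeed the PDE residual of your $v$ is only $O(f-R)$, so the resulting $L^\infty$ error is $O(\vare^2)$ in a layer of width $\vare$, which is exactly the size of $|h|^2$ and therefore \emph{not} small enough to bound the second difference $u(x+h)-2u(x)+u(x-h)$. The paper fixes this by using the two-term expansion $\ov{u}=w+v$ of Theorem \ref{thm:asymptotic}, whose residual is $O(\vare^2)$, giving $L^\infty$ error $O(\vare^3)$, which after dividing by $|h|^2=\vare^2$ contributes only $O(\vare)$ to the second-difference estimate. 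You flag this as the ``chief obstacle'' but do not construct the correction; this is fixable and consistent with the paper.

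Your interior step, however, contains a genuine gap. You replace the comparison-principle argument with a strict-concavity contradiction: from the optimality relations $Du(x_*\pm h_*) = Du(x_*) \mp C_0 h_*$, you want
\[
H(Du(x_*+h_*)) + H(Du(x_*-h_*)) - 2H(Du(x_*)) \leq -\lambda C_0^2 |h_*|^2
\]
to contradict semiconvexity of $\rho^{1/d}$. But $H(p)=(p_1\cdots p_d)^{1/d}$ is positively homogeneous of degree one, hence $D^2H(p)$ annihilates $p$; the concavity gap is zero (to second order) whenever the variation direction $h_*$ is parallel to $Du(x_*)$. Nothing in the optimality conditions excludes this case --- $h_*$ is free --- so the inequality degenerates to $\leq 0$ and no contradiction results. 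You acknowledge this as a ``secondary obstacle'' requiring case analysis ``orthogonally to $Du$,'' but the degenerate case cannot be dismissed: if $h_*\parallel Du(x_*)$ the argument simply gives no information. The paper's Theorem \ref{thm:semiconvexboundsplitdomain} avoids this entirely. It uses plain concavity (Jensen) to show that $w(x)=\tfrac12(u(x+h)+u(x-h))$ is a supersolution of $L(Dw)\geq \rho^{1/d}-\tfrac{K_\rho}{2}|h|^2$, then perturbs $w$ to a strict supersolution $w_\theta$ and invokes the comparison principle of Theorem \ref{thm:maximumprinciple} together with the boundary estimate $w\geq u - \tfrac{K_u}{2}|h|^2$ on $\Gamma$. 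This requires no strict concavity of $H$ at all, which is essential precisely because $H$ has a degenerate Hessian direction. You should abandon the contradiction formulation and instead verify the supersolution property of $w$ and apply comparison.
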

\subsection{Definition of Viscosity Solution}
Here we briefly state for reference the definition of viscosity solution for the first-order equation
\begin{equation}\label{eq:firstorderequation}
    H(Du,u,x) = 0 \text{ in } \O,
\end{equation}
where $H$ is continuous and $\O \subset \R^d$.
\begin{definition}[Viscosity solution]
We say that $u\in \USC(\O)$ is a \textit{viscosity subsolution} of \eqref{eq:firstorderequation} if for every $x\in \O$ and every $\varphi \in C^\infty(\R^d)$ such that $u-\varphi$ has a local maximum at $x$ with respect to $\O$ we have
\begin{equation*}
    H(D\varphi(x), \varphi(x), x) \leq 0.
\end{equation*}
We will often say that $u\in \USC(\O)$ is a viscosity solution of $H \leq 0$ in $\O$ when $u$ is a viscosity subsolution of \eqref{eq:firstorderequation}. Similarly, we say that $u\in \LSC(\O)$ is a \textit{viscosity supersolution} of \eqref{eq:firstorderequation} if for every $x\in \O$ and every $\varphi \in C^\infty(\R^d)$ such that $u-\varphi$ has a local minimum at $x$ with respect to $\O$ we have
\begin{equation*}
    H(D\varphi(x), \varphi(x), x) \geq 0.
\end{equation*}
We also say that $u\in \LSC(\O)$ is a viscosity solution of $H \geq 0$ in $\O$ when $u$ is a viscosity supersolution of \eqref{eq:firstorderequation}. Finally, we say that $u$ is a \textit{viscosity solution} of \eqref{eq:firstorderequation} if $u$ is both a viscosity subsolution and a viscosity supersolution.
\end{definition}
\subsection{Outline of Proof of Theorem \ref{thm:mainaux}}

\begin{figure}[!t]
\centering
\subfloat[$A_n\subset A$]{\includegraphics[width=.45\textwidth]{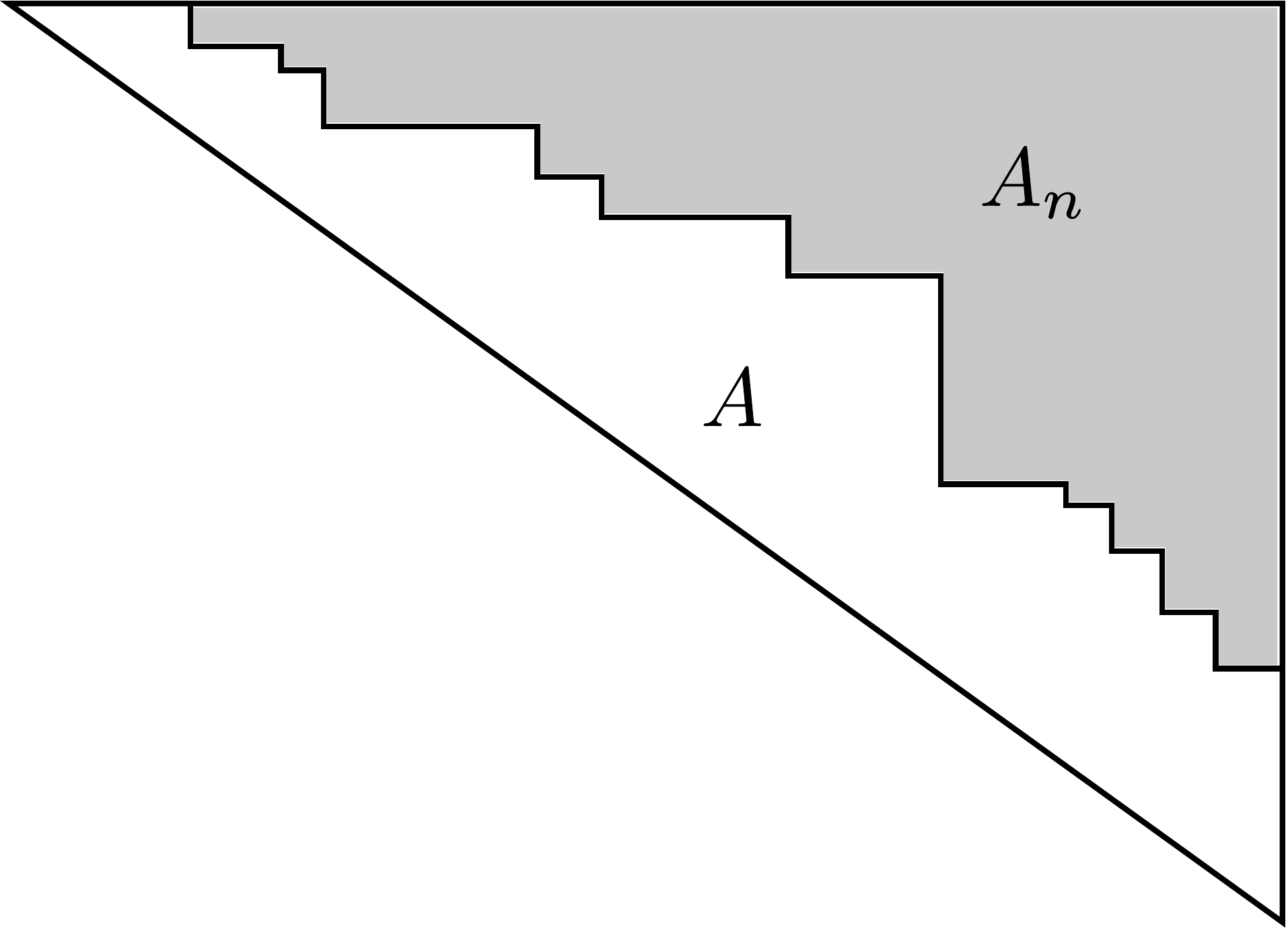}}\hfill
\subfloat[$B\subset B_n$]{\includegraphics[width=.45\textwidth]{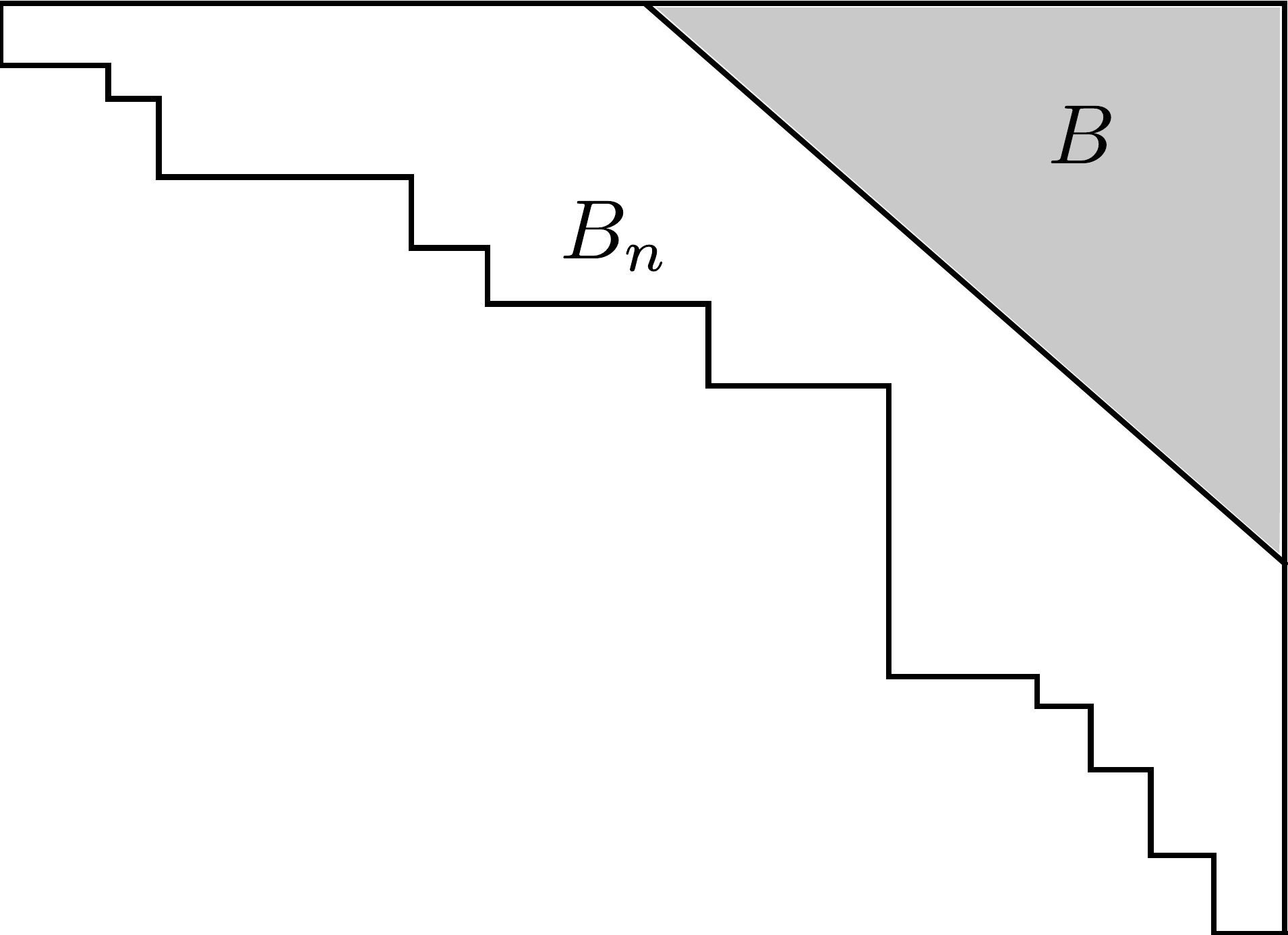}}
\caption{Illustration of the sets $A_n,B_n$ and $A,B$ used in the proof outline, and the viscosity touching property that $A_n\subset A$ and $B\subset B_n$. }
\label{fig:AB}
\end{figure}

Here, we present a high-level outline of the proof of Theorem \ref{thm:mainaux}.     The proof follows a stochastic homogenization argument, similar to \cite{armstrong2014error}, but with different ingredients. We first study the asymptotics of the longest chain in orthogonal simplices of the form
\begin{equation}\label{eq:gen_simplex}
S_{y,p} := \set{x\in (-\infty,y]^d: 1 + (x-y)\cdot p^{-1} \geq 0}
\end{equation}
and
\begin{equation}\label{eq:gen_simplex2}
    S_{p} := \set{x\in (-\infty,0]^d: 1 + x\cdot p^{-1} \geq 0}
\end{equation}
where $p\in (0,\infty)^d$ and $p^{-1} = (p_1^{-1},\ldots ,p_d^{-1})$. The set $S_{p}$ is an \emph{orthogonal simplex} with side length $p_i$ in the $i^{\rm th}$ coordinate direction. The measure of $S_p$ is given by
\begin{equation}\label{eq:measureSp}
|S_p| = \frac{p_1 \cdots p_d}{d^d}.
\end{equation}
The sets $A$ and $B$ in Figure \ref{fig:AB} show examples of orthogonal simplices. The longest chain in an orthogonal simplex, $u_n(S_p)$, can be thought of as a \emph{cell problem} from homogenization, in the sense that it is a simpler local problem, whose solution allows us to prove our main results. The value of $p$ will turn out to be proportional to the gradient $Du$ of the continuum limit $u$, as in homogenization, and the cell problem exactly describes the local behaviour of $u_n$ for large $n$.

For simplicity, we will take the intensity $\rho$ to be constant on $\R^d$ throughout the rest of this section, and we denote the constant value by $\rho>0$. The extension to nonconstant intensities follows by approximating $\rho$ from above and below by constant intensities on the simplices $S_p$, which are vanishingly small as $n\to \infty$. It was shown in \cite{calder2016direct} that 
\begin{equation}\label{eq:splim}
\lim_{n\to \infty}u_n(S_p) = d\rho^{1/d}|S_p|^{1/d},
\end{equation}
with probability one. This is proved by reducing to the unit cell problem $u_n(S_{\mathds{1}})$ using \emph{dilation invariance} of $u_n$ and the sets $S_p$. In particular, if $\Phi:\R^n\to \R^n$ is any dilation (i.e., $\Phi x = (a_1x_1,\dots,a_dx_d)$ for $a_i>0$), then we have $\Phi S_p = S_{\Phi^{-1} p}$ and so 
\[ \ell(X_{n\rho}\cap S_p) = \ell(\Phi X_{n\rho}\cap \Phi S_p) \sim \ell(X_{n|\Phi|^{-1}\rho}\cap \Phi S_p).\]
We then choose $\Phi$ so that $\Phi S_p = S_{\mathds{1}}$, that is $a_i=p_{i}$, to obtain 
\[u_n(S_p) \sim (p_1\cdots p_d)^{-1/d}u_{n|\Phi|^{-1}}(S_{\mathds{1}})\]
This shows that the scaling limit \eqref{eq:splim} for a general simplex $S_p$ follows directly follow from one for the unit simplex $S_{\mathds{1}}$. 

The first ingredient in our proof is a convergence rate, with high probability, for the cell problem \eqref{eq:splim}. In particular, in Theorem \ref{thm:simplexrate} we improve \eqref{eq:splim} by showing that
\begin{equation}\label{eq:cellprob}
 u_n(S_p) = d\rho^{1/d}|S_p|^{1/d} + O\left( n^{-1/2d}|S_p|^{1/2d}\right)
 \end{equation}
with high probability, up to logarithmic factors. The proof is based on the concentration of measure results in \cite{bollobas1992height} for the length of a longest chain in boxes, which uses Azuma's inequality. We adapt these results to the simplices $S_p$.

To illustrate how the cell problem \eqref{eq:cellprob} is used to prove our main results, let $x_0 \in \Omega_R$ and define
\[A_n = \left\{x\in [0,x_0]\cap \Omega_R\, : \,  u_n(x_0) - u_n(x)\leq \vare \right\}.\]
Basically by definition  we have $u_n(A_n)\approx \vare$ (see Lemma \ref{lem:chaininAn} for a precise statement of this). Now, if $u_n$ is well approximated by a smooth function $u$, then we can Taylor expand $u$ to show that $A_n\approx x_0 + S_p$ where $p^{-1} = \vare^{-1}Du(x_0)$. In this case we use \eqref{eq:measureSp} to obtain
\[|S_p| = \frac{p_1\cdots p_d}{d^d} = \frac{\vare^d}{d^d u_{x_1}(x_0)\cdots u_{x_d}(x_0)},\]
and hence
\begin{equation}\label{eq:heuristic}
\vare\approx u_n(A_n) \approx u_n(S_p) \approx d\rho^{1/d}|S_p|^{1/d} = \frac{\vare \rho^{1/d}}{(u_{x_1}(x_0)\cdots u_{x_d}(x_0))^{1/d}}.
\end{equation}
Rearranging we obtain the Hamilton-Jacobi equation \eqref{eq:PDE}.

The proof of our main result involves keeping track of the error estimate from the cell problem convergence rate \eqref{eq:cellprob} in the argument above, as well as using the viscosity solution framework to push the Taylor expansion arguments above onto smooth test functions. For this, we use the fact that the set $A_n$ satisfies a \emph{viscosity property}. That is, if $u_n - \varphi$ attains its maximum at $x_0$, then
\[u_n(x) - \varphi(x) \leq u_n(x_0) - \varphi(x_0),\]
and so
\[\varphi(x_0) - \varphi(x) \leq u_n(x_0) - u_n(x).\]
It follows that $A_n\subset A$, where $A$ is the corresponding set defined for the test function $\varphi$, given by
\[A = \left\{x\in [0,x_0]\cap \Omega_R\, : \,  \varphi(x_0) - \varphi(x)\leq \vare \right\}.\]
The inclusion $A_n\subset A$ is depicted in Figure \ref{fig:AB} (A). Then \eqref{eq:heuristic} is modified by inserting the inequality $u_n(A_n) \leq u_n(A)$, and then approximating $A$ by a simplex, which is possible when the test function $\varphi$ is sufficiently smooth. This gives a rate in only one direction, since we get a subsolution condition, and so we also need to consider touching from below; that is, examining the minimum value of  $u-\varphi$. In this case the inequalities are reversed and we have $A \subset A_n$. This inclusion is depicted in Figure \ref{fig:AB} (B), where we write $B$ and $B_n$ in place of $A$ and $A_n$ (different names are used in the proofs of our main results for technical reasons).

The convergence rates in our main results are then proved using a maximum principle argument, which examines the maximum of $u_n-u$ (and subsequently $u - u_n$)  and uses the viscosity properties and cell problem convergence rates described above. In the case where $u$ is a non-smooth viscosity solution, one typically replaces $u$ by smoother approximate sub-and super-solutions obtained by inf- and sup-convolutions, to allow for Taylor expansions (equivalently we may use a doubling variables argument). Another main contribution of our paper is a new semiconvexity estimate for the  solution $u$ of \eqref{eq:PDEaux} on the rounded off domain $\Omega_R$ (see Theorem \ref{thm:mainsemiconvexity}). We sharply characterize the blow-up of the gradient and semiconvexity constant of $u$ as $R\to 0$. This allows us to avoid the sup-convolution and use $\varphi=u$ directly in the maximum principle argument when bounding $u-u_n$. This leads to the better $O(n^{-1/3d})$ convergence rate in Theorem \ref{thm:mainaux} (b). In the other direction, when bounding $u_n-u$, we would need semiconcavity of $u$, which is not true in general, so we use the inf-convolution to produce a semiconcave approximation, leading to the worse $O(n^{-1/4d})$ rate in Theorem \ref{thm:mainaux} (a). As $R\to 0$ and we approach the corner singularity problem \eqref{eq:PDE}, we lose control of the semiconvexity estimates, and the solution of \eqref{eq:PDE} is neither semiconvex nor semiconcave in general. We thus obtain the rates in Theorem \ref{thm:mainfull} by approximation to the rounded off case \eqref{eq:PDEaux}, leading to substantially worse rates of convergence in the presence of the corner singularity in \eqref{eq:PDE}. 

While our proof techniques are at a high level similar to \cite{armstrong2014error}, the details are substantially different and cannot be compared directly. We can, however, compare the final convergence rates we obtain. In \cite{armstrong2014error} the authors consider stochastic homogenization of Hamilton-Jacobi equations of the form
\[u_t^\vare + H\left(Du^\vare,\frac{x}{\vare},\omega\right) = 0 \ \ \text{in } \R^d\times (0,\infty),\]
and obtain quantitative homogenization rates of
\begin{equation}\label{eq:armstrong}
-O\left( \vare^{1/8 - \delta}\right) \leq u^\vare - u \leq O\left( \vare^{1/5 - \delta}\right),
\end{equation}
for any $\delta>0$, in the setting where $H$ is level-set convex and coercive in the gradient. Our Hamiltonian $H(p)=p_1\cdots p_d$ is level-set concave (and in fact we can write it as $H(p)=(p_1\cdots p_d)^{1/d}$ to obtain a concave Hamiltonian), but it is not coercive. Recalling that nondominated sorting can be viewed as a stochastic Hamilton-Jacobi equation \eqref{eq:UnPDE} with rapidly oscillating terms on the order of $\vare=n^{-1/d}$ we see that our rates in Theorem \ref{thm:mainaux} yield
\[-C_1 \vare^{1/3} \leq u_n - u \leq C_2 \vare^{1/4},\]
up to logarthmic factors, which are substantially sharper than \eqref{eq:armstrong}. 

\subsection{Outline of Paper}
Here we outline the remainder of the paper. In Section \ref{sec:prelim} we establish a maximum principle and Lipschitz estimates for \eqref{eq:PDEaux} that are used throughout the paper. In Section \ref{sec:longestchain} we extend the work of Bollob\'as and Winkler in \cite{bollobas1992height} and establish rates of convergence for the longest chain problem in simplices. In Section \ref{sec:supporting} we establish our principle lemma for proving Theorem \ref{thm:mainaux}, which shows for a strict supersolution $\varphi$ of (\ref{eq:PDEaux}) that the maximum of $u_n - \varphi$ occurs near the boundary with high probability. In Section \ref{sec:mainrates} we present the proofs of Theorems \ref{thm:mainaux} and \ref{thm:mainfull}, and in Section \ref{sec:semiconvexity} we present the proof of Theorem \ref{thm:mainsemiconvexity}.

\section{Maximum Principle and Lipschitz estimates}\label{sec:prelim}
In this section we establish fundamental results regarding the PDE \eqref{eq:PDE} that are used throughout the paper. First we show that if $u$ satisfies $u_{x_1}\ldots u_{x_d} = \rho$ on a domain $\Omega$, then a closely related PDE is also automatically satisfied at certain boundary points. Given $M > 0$, let $\Omega \subset [0,M]^d$ and define
\begin{align}\label{eq:partialstaromega}
    \partial^{\ast} \Omega = \set{y\in \ov{\Omega}: y_i = M \text{ for some } i \text{ and } \exists \vare>0 \text{ such that } B(y,\vare) \cap [0,M)^d \subset \Omega}.
\end{align}
\begin{lemma}\label{lem:viscsolonboundary}
Given $\Omega \subset [0,M]^d$, let $\partial^\ast \Omega$ be given by \eqref{eq:partialstaromega} and let
$\rho \in C(\ov{\Omega})$ satisfy \eqref{eq:rhobounds}. Then the following statements hold.
\begin{enumerate}
    \item[(a)]
    Suppose that $u$ satisfies $u_{x_1}u_{x_2}\ldots u_{x_d}  \leq  \rho$ in $\Omega$. Then $u$ satisfies 
  \begin{align*}
      \prod_{i=1}^d (u_{x_i})_+  \leq  \rho \text{ in } \Omega \cup \partial^{\ast} \Omega.
  \end{align*}
    \item[(b)]
    Suppose that $u$ satisfies $u_{x_1}u_{x_2}\ldots u_{x_d}  \geq  \rho$ in $\Omega$ and $u$ is nondecreasing in each coordinate. Then $u$ satisfies 
    \begin{align*}
        \prod_{i=1}^d (u_{x_i})_+  \geq  \rho \text{ in }  \Omega \cup \partial^{\ast} \Omega.
    \end{align*}
\end{enumerate}
\end{lemma}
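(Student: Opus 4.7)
The interior claim in both parts follows from a direct sign analysis: at any smooth test function $\varphi$ touching $u$ from the appropriate side at $y \in \Omega$, either some $\varphi_{x_i}(y) \leq 0$, in which case $\prod_i (\varphi_{x_i}(y))_+ = 0$ satisfies both conclusions since $\rho > 0$, or every $\varphi_{x_i}(y) > 0$, in which case $\prod_i (\varphi_{x_i}(y))_+ = \prod_i \varphi_{x_i}(y)$ and the hypothesis on $u$ in $\Omega$ applies directly. The substantive work is therefore to promote the PDE to boundary points $y \in \partial^{\ast} \Omega$ where some coordinate $y_i$ equals $M$.

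\textbf{Part (a) on $\partial^{\ast}\Omega$.} Fix $y \in \partial^{\ast} \Omega$, set $I = \set{i : y_i = M}$, and let $\varphi$ be a smooth test function with $u - \varphi$ achieving a local maximum at $y$ over $\Omega \cup \partial^{\ast} \Omega$. After replacing $\varphi$ by $\varphi + |x-y|^2$ the maximum is strict and $D\varphi(y)$ is unchanged. Assuming all $\varphi_{x_i}(y) > 0$ (the other case is trivial), I would introduce the singular perturbation $\varphi_\delta(x) = \varphi(x) - \delta \sum_{i \in I}(M - x_i)^\alpha$ for a fixed $\alpha \in (0,1)$, and let $y_\delta$ be a maximizer of $u - \varphi_\delta$ over a small closed ball about $y$ intersected with $\ov{\Omega}$. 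Since $\partial_{x_i}(M - x_i)^\alpha \to -\infty$ as $x_i \to M^-$, the partial derivatives $\varphi_{\delta, x_i}$ in directions $i \in I$ diverge to $+\infty$ near the top face, which (together with a local Lipschitz bound on $u$) rules out $(y_\delta)_j = M$ for any $j \in I$ and places $y_\delta$ in $\Omega$ by the definition of $\partial^{\ast}\Omega$. Applying the interior subsolution at $y_\delta$ yields $\prod_i \varphi_{\delta, x_i}(y_\delta) \leq \rho(y_\delta)$; since the perturbation only increases each gradient component in $I$ and all $\varphi_{x_i}(y_\delta)$ remain positive for small $\delta$ by continuity, this forces $\prod_i \varphi_{x_i}(y_\delta) \leq \rho(y_\delta)$. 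Letting $\delta \to 0^+$ sends $y_\delta \to y$ and the claim follows by continuity of $D\varphi$ and $\rho$.

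\textbf{Part (b) on $\partial^{\ast}\Omega$ and the main obstacle.} The supersolution case is symmetric, with the nondecreasing hypothesis now entering essentially to handle the positivization: if $u - \varphi$ has a local min at $y$, testing against $y + t e_i$ with $t < 0$ yields $\varphi_{x_i}(y) \geq 0$, so $(\varphi_{x_i}(y))_+ = \varphi_{x_i}(y)$ and the conclusion reduces to $\prod_i \varphi_{x_i}(y) \geq \rho(y)$. I would use the opposite-sign perturbation $\varphi_\delta(x) = \varphi(x) + \delta \sum_{i \in I}(M - x_i)^\alpha$, so that $\varphi_{\delta, x_i}$ diverges to $-\infty$ along the top face and the minimizer $y_\delta$ of $u - \varphi_\delta$ is pushed into $\Omega$ by the same reasoning. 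The interior supersolution gives $\prod_i \varphi_{\delta, x_i}(y_\delta) \geq \rho(y_\delta) > 0$, which, combined with $\varphi_{\delta, x_i}(y_\delta) \geq 0$ from the monotonicity argument applied at $y_\delta$, forces each factor strictly positive; since the perturbation decreases the gradient components in $I$, we have $\prod_i \varphi_{x_i}(y_\delta) \geq \prod_i \varphi_{\delta, x_i}(y_\delta) \geq \rho(y_\delta)$, and $\delta \to 0^+$ completes the argument. The principal obstacle throughout is ruling out extremizers of $u - \varphi_\delta$ that lie on the top face; this hinges on the subcritical exponent $\alpha < 1$, which makes the order $\eta^\alpha$ displacement of $\varphi_\delta$ in the $-e_j$ direction dominate the linear Lipschitz response of $u - \varphi$, thereby contradicting maximality (resp. minimality) at any candidate point with $(y_\delta)_j = M$.
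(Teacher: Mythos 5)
Your overall strategy — introduce a singular perturbation in the coordinates hitting the face $x_i = M$ to push the extremizer of $u - \varphi_\delta$ off the boundary into $\Omega$, apply the interior inequality there, and pass to the limit — is the same strategy the paper uses, and your handling of the interior case and of the monotonicity argument (showing $\varphi_{x_i}(y) \geq 0$ at a local minimum) matches the paper's. However, your specific choice of perturbation creates a genuine gap. You use $\varphi_\delta(x) = \varphi(x) \mp \delta\sum_{i\in I}(M-x_i)^\alpha$ with $\alpha \in (0,1)$: this function is \emph{bounded} at $x_i = M$ (it vanishes there), and only its gradient blows up. Your argument for ruling out an extremizer $y_\delta$ on the face $x_j = M$ therefore has to compare the $\eta^\alpha$ gain from the perturbation against the response of $u - \varphi$ over the displacement $\eta$, which you do by invoking ``a local Lipschitz bound on $u$.'' But no such bound is part of the lemma's hypotheses: $u$ is only a semicontinuous viscosity sub- or supersolution. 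Because the Hamiltonian $p\mapsto p_1\cdots p_d$ is not coercive, Lipschitz regularity of $u$ is not automatic (the paper explicitly flags this in the introduction), and without a known modulus of continuity near the face there is no admissible $\alpha$ that makes the comparison go through — a subsolution could in principle increase at rate $\eta^\beta$ with $\beta < \alpha$ in the $e_j$ direction and your contradiction evaporates.

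The paper avoids this entirely by choosing the perturbation $\varphi_\varepsilon(x) = \varphi(x) \pm \varepsilon\sum_{i=1}^d \frac{1}{M - x_i}$, which is \emph{unbounded} as $x_i \to M^-$. Then $u - \varphi_\varepsilon$ tends to $\mp\infty$ uniformly near the top faces, so the extremizer automatically lies in $[0,M)^d$ using nothing but boundedness (hence semicontinuity on the compact $\ov\Omega$) of $u$; no Lipschitz estimate, no comparison of moduli. The rest — strict-extremum localization forcing $x_k \to x_0$, entering $\Omega$ via the definition of $\partial^*\Omega$, applying the interior viscosity inequality, noting the perturbation's gradient has a favorable sign, and taking $k\to\infty$ — proceeds exactly as you outline. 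So the fix is local: replace your $(M-x_i)^\alpha$ barrier with the pole barrier $\frac{1}{M - x_i}$ and the need for a Lipschitz bound on $u$ disappears.
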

\begin{proof}
To prove (b), let $\psi \in C^\infty(\R^d)$ such that $u - \psi$ has a local minimum at $x_0\in \Omega$, and show that $\psi_{x_i}(x_0)\geq 0$. For $y$ in a neighborhood of $x_0$ we have
\begin{align*}
    u(x_0) - u(y) \leq \psi(x_0) - \psi(y).
\end{align*}
Since $u$ is nondecreasing in each coordinate, when $h>0$ is sufficiently small we have
\begin{align*}
    0\leq \frac{u(x_0) - u(x_0 - he_i)}{h} \leq \frac{\psi(x_0) - \psi(x_0 - he_i)}{h}.
\end{align*}
Hence, $\psi_{x_i}(x_0) \geq 0$. Now let $x_0 \in \Omega \cup \partial^{\ast} \Omega$ and let $\varphi \in C^\infty(\R^d)$ such that $u- \varphi$ has a local minimum at $x_0$. Without loss of generality, we may assume that $u-\varphi$ attains a strict global minimum at $x_0$. If $x_0 \in \Omega$, then $\varphi_{x_i}(x_0) \geq 0$, and we have
\begin{align*}
     \prod_{i=1}^d (\varphi_{x_i}(x_0))_+ = \prod_{i=1}^d \varphi_{x_i}(x_0) \geq \rho(x_0).
\end{align*}
If $x_0 \in \partial^{\ast} \Omega$, let $\varphi_\vare(x) = \varphi(x) - \vare \sum_{i=1}^d \frac{1}{M-x_i}$, and we claim that $u - \varphi_\vare$ attains its minimum over $\ov{\Omega}$ in $\ov{\Omega} \cap [0,M)^d$. To prove this, let $y_k \in [0,M)^d$ be a minimizing sequence. Replacing $y_k$ with a convergent subsequence, we may assume that $y_k \to y \in [0,M]^d$. It is clear from the definition of $\varphi_\vare$ that we must have $y \in [0,M)^d$. There exist sequences $\vare_k \to 0$ and $x_k \to x_0$ such that $\vare_k > 0$ and $u - \varphi_{\vare_k}$ has a local minimum at $x_k \in [0,M)^d$. Since $x_0 \in \partial^{\ast} \Omega$, there exists $N > 0$ such that
$x_k \in \Omega$ for $k > N$. Hence, for all $k > N$ we have
\begin{align*}
    \prod_{j=1}^d \left(\varphi_{x_j}(x_k) - \frac{\vare_k}{(M-x_{k,j})^2}   \right) \geq \rho(x_k).
\end{align*}
Since $u - \varphi_{\vare_k}$ has a local minimum at $x_0$ and $u$ is nondecreasing in each coordinate, we have $(\varphi_{\vare_k})_{x_j} = \varphi_{x_j}(x_k) - \frac{\vare_k}{(M-x_{k,j})^2} \geq 0$ for $j=1,\ldots ,d$. Hence for $k > N$ we have
\begin{align*}
    \prod_{j=1}^d \left(\varphi_{x_j}(x_k)  \right)_+ = \prod_{j=1}^d \left(\varphi_{x_j}(x_k)  \right) \geq \rho(x_k).
\end{align*}
Letting $k\to \infty$, we have $\prod_{j=1}^d \left(\varphi_{x_j}(x_0)  \right)_+ \geq \rho(x_0)$. To prove (a), let $x_0 \in \Omega \cup \partial^{\ast} \Omega$, and let $\varphi \in C^{\infty}(\R^d)$ such that $u-\varphi$ has a local maximum at $x_0$. If $\varphi_{x_i}(x_0) \leq 0$ for some $1\leq i \leq d$, then we have
\begin{align*}
    0 = \prod_{j=1}^d \left(\varphi_{x_j}(x_0)  \right)_+ \leq \rho(x_0).
\end{align*}
Assume that $\varphi_{x_i}(x_0) > 0$ for each $i$. If $x_0 \in \Omega$, then we have
\begin{align*}
    \prod_{j=1}^d \varphi_{x_j}(x_0)   = \prod_{j=1}^d \left(\varphi_{x_j}(x_0)  \right)_+ \leq \rho(x_0).
\end{align*}
If $x_0 \in \partial^{\ast} \Omega$. Without loss of generality, we may assume that $u-\varphi$ attains a strict global maximum at $x_0$. Let $\varphi_\vare(x) = \varphi(x) + \vare \sum_{i=1}^d \frac{1}{M-x_i}$. As in (a), $u - \varphi_\vare$ attains its maximum over $\ov{\Omega}$ in $\ov{\Omega} \cap [0,M)^d$. Hence, there exist sequences $\vare_k \to 0$ and $x_k \to x_0$, $x_k \in [0,M]^d$ such that $u - \varphi_{\vare_k}$ has a local maximum at $x_k \in [0,M)^d$. Then when $k$ is large we have $x_k \in \Omega$, hence
\begin{align*}
    \prod_{j=1}^d \varphi_{x_j}(x_k)_+ \leq \prod_{j=1}^d \left(\varphi_{x_j}(x_k) + \frac{\vare_k}{(M-x_{k,j})^2}   \right) \leq \rho(x_k).
\end{align*}
Since $\varphi$ is smooth, we have $\varphi_{x_j}(x_k) > 0$ for $k$ sufficiently large. Letting $k\to \infty$, we have
\begin{align*}
    \prod_{j=1}^d \left(\varphi_{x_j}(x_0)_+  \right) \leq \rho(x_0).
\end{align*}

\end{proof}
Next we establish that subsolutions and supersolutions of \eqref{eq:PDE} may be perturbed to strict subsolutions and supersolutions. Let $L$ and $H$ be given by $L(p) = (p_1\ldots p_d)^{1/d}$, $H(p) = p_1\ldots p_d$.
\begin{proposition}\label{prop:perturb}
Given $V \subseteq \R^d$, let $\rho \in C(\ov{V})$ satisfy \eqref{eq:rhobounds}. Then the following statements hold.
\begin{enumerate}
\item[(a1)]
Let $u$ satisfy $L(Du) \geq \rho$ on $V$. Then for all $\lambda > 0$ we have
\begin{align*}
L(D((1+\lambda)u)) \geq \rho + \rhomin \lambda \text{ on } V.
\end{align*}

\item[(b1)]
Let $u$ satisfy $L(Du)\leq \rho$ on $V$. Then for all $\lambda \in (0,1]$ we have
\begin{align*}
L(D((1-\lambda)u)) \leq \rho - \rhomin \lambda \text{ on } V.
\end{align*}

\item[(a2)]
Let $u$ satisfy $H(Du) \geq \rho$ on $V$. Then for all $\lambda > 0$ we have
\begin{align*}
H(D((1+\lambda)u)) \geq \rho + d\rhomin \lambda \text{ on } V.
\end{align*}

\item[(b2)]
Let $u$ satisfy $H(Du)\leq \rho$ on $V$. Then for all $\lambda \in (0,1]$ we have
\begin{align*}
H(D((1-\lambda)u)) \leq \rho - \rhomin \lambda \text{ on } V.
\end{align*}
\end{enumerate}
\end{proposition}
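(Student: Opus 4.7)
The proof will exploit the positive homogeneity of $L$ and $H$: $L$ is $1$-homogeneous and $H$ is $d$-homogeneous, so for a classical solution one immediately has $L(D((1\pm\lambda)u)) = (1\pm\lambda)L(Du)$ and $H(D((1\pm\lambda)u)) = (1\pm\lambda)^d H(Du)$. The bulk of the work is lifting these identities through the viscosity definition via test-function rescalings.

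For part (a1), I would take $\varphi \in C^\infty(\R^d)$ with $(1+\lambda)u - \varphi$ attaining a local minimum at some $x_0 \in V$ and set $\psi := \varphi/(1+\lambda)$. Since $1+\lambda > 0$, the function $u - \psi$ also has a local minimum at $x_0$, so the supersolution hypothesis gives $L(D\psi(x_0)) \geq \rho(x_0)$. By $1$-homogeneity this reads $L(D\varphi(x_0)) \geq (1+\lambda)\rho(x_0) \geq \rho(x_0) + \lambda\rhomin$, using \eqref{eq:rhobounds}. Part (b1) for $\lambda \in (0,1)$ is identical with $1+\lambda$ replaced by $1-\lambda$ and minima replaced by maxima; for the endpoint $\lambda=1$ the function $(1-\lambda)u\equiv 0$ and the claim reduces to the trivial bound $0\leq\rho-\rhomin$. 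Parts (a2) and (b2) proceed by the same substitution with $H$ in place of $L$, producing a factor $(1\pm\lambda)^d$ rather than $(1\pm\lambda)$. For (a2), Bernoulli's inequality $(1+\lambda)^d \geq 1+d\lambda$ gives $(1+\lambda)^d\rho \geq \rho + d\lambda\rhomin$; for (b2), the elementary bound $(1-\lambda)^d\leq 1-\lambda$ for $\lambda\in[0,1]$ and $d\geq 1$ (since $(1-\lambda)^{d-1}\leq 1$) gives $(1-\lambda)^d\rho \leq \rho - \lambda\rhomin$.

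There is no substantive obstacle here; the argument is essentially bookkeeping. The only point requiring care is that the rescaling $\varphi \mapsto \varphi/(1\pm\lambda)$ preserves the type of local extremum, which holds whenever the scaling factor is strictly positive; the endpoint $\lambda=1$ in (b1) and (b2) must be treated separately as above.
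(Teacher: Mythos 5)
Your proof is correct and follows essentially the same route as the paper: rescale the test function, apply homogeneity of $L$ and $H$, then invoke $(1+\lambda)^d \geq 1+d\lambda$ for (a2) and $(1-\lambda)^d \leq 1-\lambda$ for (b2). Your phrasing (starting from an arbitrary test function for $(1\pm\lambda)u$ and dividing by the scale factor) is actually the cleaner direction for verifying the viscosity definition, and your explicit treatment of the endpoint $\lambda=1$ in (b1), (b2) is a small but correct refinement the paper glosses over.
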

\begin{proof}
To prove (a1), let $x\in V$. Then there exists $\varphi \in C^\infty(\R^d)$ such that $u-\varphi$ has a local minimum at $x$. Consequently, $(1+\lambda)u - (1+\lambda)\varphi$ has  a local minimum at $x$, so 
\begin{equation*}
L((1+\lambda)D\varphi(x)) = (1+\lambda) f(x) \geq f(x) + \lambda (\inf_V f)
\end{equation*}
and the statement follows. The proofs of the other statements are very similar and omitted here, making use of the inequalities $(1+\lambda)^d \geq (1+d\lambda)$ in (a2) and $(1-\lambda)^d \leq (1-\lambda)$ in (b2).
\end{proof}
Now we establish a comparison principle for the PDE (\ref{eq:PDE}).
\begin{theorem}\label{thm:maximumprinciple}
Given $\Omega \subset [0,M]^d$, let $\Gamma \subset \ov{\Omega}$ be a closed set such that $ \ov{\Omega} \subseteq \Gamma \cup \Omega \cup \partial^{\ast} \Omega$, where $\partial^\ast \Omega$ is given by \eqref{eq:partialstaromega}. Suppose that $\rho \in C(\ov{\Omega})$ satisfies \eqref{eq:rhobounds}, and $u\in C(\ov{\Omega})$ and $v\in C(\ov{\Omega})$ satisfy
\begin{equation}
\left\{\begin{aligned}
u_{x_1}u_{x_2}\ldots u_{x_d}  &\leq  \rho&&\text{in }\Omega\\ 
u &= g_1 &&\text{on } \Gamma,
\end{aligned}\right.
\end{equation}
and
\begin{equation}
\left\{\begin{aligned}
v_{x_1}v_{x_2}\ldots v_{x_d}  &\geq  \rho &&\text{in }\Omega\\ 
v &= g_2 &&\text{on } \Gamma,
\end{aligned}\right.
\end{equation}
respectively. Assume that $v$ is nondecreasing in each coordinate and $g_1 \leq g_2$ on $\Gamma$. Then $u \leq v \text{ on } \ov{\Omega}$.
\end{theorem}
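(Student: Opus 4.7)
The plan is a doubling-of-variables argument combined with the strict-subsolution perturbation from Proposition \ref{prop:perturb}, using Lemma \ref{lem:viscsolonboundary} to extend the viscosity inequalities to the artificial boundary $\partial^{\ast}\Omega$. First, assume for contradiction that $\sup_{\ov{\Omega}}(u - v) > 0$. After adding a constant to both $u$ and $v$ (which leaves the PDEs invariant) we may assume $g_1, g_2 \geq 0$ on $\Gamma$, so that for every $\lambda \in (0,1)$ the shifted function $u_\lambda := (1-\lambda)u$ satisfies $u_\lambda \leq g_2 = v$ on $\Gamma$. By Proposition \ref{prop:perturb}(b2), $u_\lambda$ is a strict subsolution: $H(Du_\lambda) \leq \rho - \rhomin \lambda$ in $\Omega$. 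For $\lambda$ small enough $\sup_{\ov{\Omega}}(u_\lambda - v) > 0$ still holds, and the maximum cannot be attained on $\Gamma$.

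Next, I would apply the standard doubling-of-variables technique. For $\alpha > 0$ consider
\begin{equation*}
\Phi_\alpha(x,y) = u_\lambda(x) - v(y) - \tfrac{\alpha}{2}\abs{x-y}^2
\end{equation*}
on $\ov{\Omega}\times\ov{\Omega}$, and let $(x_\alpha, y_\alpha)$ be a maximizer. Classical arguments (see, e.g., \cite{bardi1997}) give $x_\alpha, y_\alpha \to x_0$ along a subsequence, where $x_0$ realizes $\max_{\ov{\Omega}}(u_\lambda - v) > 0$, and $\alpha\abs{x_\alpha - y_\alpha}^2 \to 0$. Since $u_\lambda \leq v$ on $\Gamma$, we have $x_0 \notin \Gamma$, and by hypothesis $\ov{\Omega} \subseteq \Gamma \cup \Omega \cup \partial^{\ast}\Omega$, so $x_0 \in \Omega \cup \partial^{\ast}\Omega$ and the same holds for $x_\alpha, y_\alpha$ once $\alpha$ is sufficiently large.

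The test functions $\varphi(x) = v(y_\alpha) + \tfrac{\alpha}{2}\abs{x - y_\alpha}^2$ and $\psi(y) = u_\lambda(x_\alpha) - \tfrac{\alpha}{2}\abs{x_\alpha - y}^2$ touch $u_\lambda$ from above at $x_\alpha$ and $v$ from below at $y_\alpha$, respectively, with common gradient $q := \alpha(x_\alpha - y_\alpha)$. Invoking Lemma \ref{lem:viscsolonboundary}(a) for $u_\lambda$ (whose subsolution property carries over to $\Omega \cup \partial^{\ast}\Omega$) and Lemma \ref{lem:viscsolonboundary}(b) for $v$ (which is nondecreasing in each coordinate), I obtain
\begin{equation*}
\rho(y_\alpha) \leq \prod_{i=1}^d (q_i)_+ \leq \rho(x_\alpha) - \rhomin \lambda.
\end{equation*}
Letting $\alpha \to \infty$ and using continuity of $\rho$ gives $\rho(x_0) \leq \rho(x_0) - \rhomin \lambda$, the desired contradiction; hence $u \leq v$ on $\ov{\Omega}$.

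The main obstacle is handling the non-standard boundary $\partial^{\ast}\Omega$: under the usual viscosity definition the subsolution/supersolution inequalities are not guaranteed on this artificial box-boundary, so the penalized maximum could sit there and block the argument. This is precisely what Lemma \ref{lem:viscsolonboundary} is designed to overcome by extending both inequalities (with the $(\cdot)_+$ truncation on the gradient entries) to $\Omega \cup \partial^{\ast}\Omega$. A smaller technical point is the sign of the boundary data, which is resolved by the preliminary constant shift noted above.
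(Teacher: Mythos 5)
Your proof is correct and follows essentially the same route as the paper: doubling of variables, Lemma \ref{lem:viscsolonboundary} to push the viscosity inequalities onto $\partial^{\ast}\Omega$, and Proposition \ref{prop:perturb} to create a strict sub/supersolution. The only difference is cosmetic: you perturb the subsolution via $u_\lambda = (1-\lambda)u$ using part (b2), whereas the paper perturbs the supersolution via $v_\lambda = (1+\lambda)v$ using part (a2) and then sends $\lambda\to 0^+$; both are symmetric variations of the same idea. Incidentally, your preliminary constant shift to ensure $g_1, g_2 \geq 0$ before scaling is the right instinct---the paper's assertion that $u\leq v_\lambda$ on $\Gamma$ also implicitly needs $v\geq 0$ there (true in all its applications, where the boundary data vanish), so your version spells out a detail the paper glosses over.
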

\begin{proof}
Given $\lambda \in (0,1)$, set $v_\lambda = (1+\lambda)v$, and suppose for contradiction that $\sup_{\ov{\Omega}} (u-v_\lambda) > 0$. Let
\begin{equation*}
    \Phi(x,y) = u(x) - v_\lambda(y) - \frac{\alpha}{2}\abs{x-y}^2.
\end{equation*}
Then $\Phi \in C(\ov{\Omega} \times \ov{\Omega})$ and $\ov{\Omega}$ is bounded. Hence $\Phi$ attains its maximum at some $(x_\alpha, y_\alpha) \in \ov{\Omega}\times \ov{\Omega}$. Then we have
\begin{equation*}
    \Phi(x_\alpha, y_\alpha) \geq \sup_{\ov{\Omega}}(u-v_\lambda) > 0.
\end{equation*}
As $u$ and $-v_\lambda$ are bounded above on $\ov{\Omega}$ we have
\begin{equation}\label{eq:xalphayalphabound}
    \abs{x_\alpha - y_\alpha}^2 \leq \frac{C}{\alpha}.
\end{equation}
As $(x_\alpha, y_\alpha) \in \ov{\Omega}\times \ov{\Omega}$, there exists a sequence $\alpha_n \to \infty$ such that $\set{x_{\alpha_n}}$ and $\set{y_{\alpha_n}}$ are convergent sequences. Letting $x_n = x_{\alpha_n}$ and $y_n = y_{\alpha_n}$, we have $(x_n, y_n) \to (x_0, y_0)$. By \eqref{eq:xalphayalphabound} we have $x_0 = y_0$. By continuity of $\Phi$ we have
\begin{align*}
    \lim_{n \to \infty} \Phi(x_n, y_n) = u(x_0) - v_\lambda(x_0).
\end{align*}
We cannot have $x_0 \in \Gamma$, since $u(x_0) - v_\lambda(x_0) > 0$ and $u \leq v_\lambda$ on $\Gamma$. Hence, $x_0 \in \Omega \cup \partial^\ast \Omega$. As $u - v_\lambda \leq 0$ on $\Gamma$ and $(u-v_\lambda)(x_0) > 0$, by continuity of $u-v_\lambda$ there exists $N > 0$ such that $(x_n,y_n) \in (\Omega \cup \partial^\ast \Omega) \times (\Omega \cup \partial^\ast \Omega)$ for $n > N$. Let $\varphi(x) = \frac{\alpha_n}{2}\abs{x - y_n}^2$ and $\psi(x) = -\frac{\alpha_n}{2}\abs{x_n - y}^2$. Then $u - \varphi$ has a local maximum at $x_n$ and $v_\lambda - \psi$ has a local minimum at $y_n$. Setting $H(p) = p_1\ldots p_d$, Proposition \ref{prop:perturb} gives that $H(Dv_\lambda) \geq \rho + \delta$ on $\Omega$, where $\delta = \lambda \rhomin > 0$. By Lemma \ref{lem:viscsolonboundary} we have $\tilde{H}(Dv_\lambda) \geq \rho + \delta$ and $\tilde{H}(Du) \leq \rho$ on $\Omega \cup \partial^\ast \Omega$. Thus, we have
\begin{align*}
    \tilde{H}(D\varphi(x_n)) &= \tilde{H}(\alpha_n (x_n - y_n)) \leq \rho(x_n)
\end{align*}
and
\begin{align*}
    \tilde{H}(D\psi(y_n)) &= \tilde{H}(\alpha_n (x_n - y_n)) \geq \rho(y_n) + \delta.
\end{align*}
Hence, $\rho(x_n) - \rho(y_n) \geq \delta > 0$, and this gives a contradiction as $n \to \infty$. We conclude that $u \leq v_\lambda = (1+\lambda)v$ on $\ov{\Omega}$. Letting $\lambda \to 0^+$ completes the proof.
\end{proof}
Now we establish estimates on $[u]_{C^{0,1}(\ov{\Omega}_{R,M})}$ with respect to $R$ and $M$. To this end, we state the following theorem, proven in \cite[Theorem 2]{calder2014}. Let $g: \R^d \to [0,\infty)$ be bounded and Borel measurable, and let
\begin{equation}\label{eq:variational}
    U(x) = \sup_{\substack{\gamma \in \mathcal{A} \\ \gamma(1) \leq x}} J(\gamma),
\end{equation}
where 
\begin{equation*}
    J(\gamma) = \int_0^1 g(\gamma(t))^{1/d}\left[\gamma_1'(t)\ldots \gamma_d'(t)  \right]^{1/d}dt,
\end{equation*}
and
\begin{align*}
    \mathcal{A} = \set{\gamma \in C^1([0,1]; \R^d) : \gamma_j^{\prime}(t) \geq 0 \text{ for } j=1,\ldots d }.
\end{align*}
Then the value function $U$ satisfies
\begin{equation}
\left\{\begin{aligned}
U_{x_1}U_{x_2}\ldots U_{x_d}  &=  \frac{1}{d^d}g&&\text{in }\R^d_+\\ 
U &= 0 &&\text{on } \partial \R^d_+.
\end{aligned}\right.
\end{equation}
When $g = \rho \mathbbm{1}_{\Omega_{R,M}}$ and $u$ is given by \eqref{eq:PDEsemi}, Theorem \ref{thm:maximumprinciple} implies that $\frac{1}{d} U = u$ in $\Omega_{R,M}$.
\begin{theorem}\label{thm:lipschitzvar}
Given $\rho \in \Czeroone$ satisfying \eqref{eq:rhobounds}, let $u$ denote the solution to \eqref{eq:PDEsemi}. Then we have
\begin{align*}
     [u]_{C^{0,1}(\ov{\Omega}_{R,M})} \leq  C_d M^{d-1} R^{-(d-1)} \norm{\rho^{1/d}}_{C^{0,1}(\ov{\Omega}_{R,M})}.
\end{align*} 
\end{theorem}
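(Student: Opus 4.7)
I plan to deduce the estimate from the variational representation supplied just before the statement. With $g=\rho\mathbbm{1}_{\Omega_{R,M}}$, the formula (\ref{eq:variational}) combined with Theorem \ref{thm:maximumprinciple} identifies $u$ with a constant multiple of $U$ on $\Omega_{R,M}$, so the Lipschitz bound reduces to estimating $|J(\gamma^y)-J(\gamma^x)|$ for near-optimal paths to endpoints $x,y\in\ov{\Omega}_{R,M}$. A direct check in the constant-density case $\rho\equiv c$ gives the explicit solution $u_c(x)=dc^{1/d}((x_1\cdots x_d)^{1/d}-R)$, whose gradient $|Du_c(x)|=c^{1/d}(x_1\cdots x_d)^{1/d}\sqrt{\sum_i x_i^{-2}}$ is maximized on $\partial_{R,M}\Omega$ with the asserted magnitude $\sim c^{1/d}M^{d-1}R^{-(d-1)}$; this motivates the argument and identifies the scaling.

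The first step is a reduction to coordinate shifts. For $x,y\in\ov{\Omega}_{R,M}$, the componentwise maximum $z=x\vee y$ belongs to $\ov{\Omega}_{R,M}$ (since $\prod z_i\geq \prod x_i\geq R^d$), and the Pythagorean identity $|x-z|^2+|y-z|^2=|x-y|^2$ combined with the coordinatewise monotonicity of $u$ reduces the problem to bounding $u(y)-u(x)$ when $y=x+h\,e_j$ with $h>0$. For such a pair I will take a near-optimal path $\gamma^y$ for $U(y)$ and build a competitor $\tilde\gamma$ for $U(x)$ by rescaling only the $j$-th coordinate: $\tilde\gamma_j(t)=(x_j/y_j)\gamma^y_j(t)$ and $\tilde\gamma_i=\gamma^y_i$ for $i\neq j$. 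Then $\tilde\gamma\in\mathcal{A}$, $\tilde\gamma(1)\leq x$, so $U(x)\geq J(\tilde\gamma)$, and $U(y)-U(x)\leq J(\gamma^y)-J(\tilde\gamma)$.

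I would then split $J(\gamma^y)-J(\tilde\gamma)$ into three contributions: (i) a \emph{delay} term from the interval where $\gamma^y\in\Omega_{R,M}$ but $\tilde\gamma\notin\Omega_{R,M}$; (ii) a \emph{volume} term coming from the scaling factor $(x_j/y_j)^{1/d}$, producing the difference $1-(x_j/y_j)^{1/d}\leq h/(d y_j)$; and (iii) a \emph{density} term $\rho^{1/d}(\gamma^y)-\rho^{1/d}(\tilde\gamma)$, bounded via $[\rho^{1/d}]_{C^{0,1}}|\gamma^y-\tilde\gamma|\leq [\rho^{1/d}]_{C^{0,1}} h$. For (i) and (ii) the key tool is the AM-GM inequality $\sum_i \gamma_i^{y,\prime}/\gamma_i^y\geq d\,(\prod_i\gamma_i^{y,\prime}/\prod_i\gamma_i^y)^{1/d}$, equivalent to $\frac{d}{dt}(\prod_i\gamma_i^y)^{1/d}\geq (\prod_i\gamma_i^{y,\prime})^{1/d}$, which gives the tight bound $\int_{\text{delay}}(\prod_i\gamma_i^{y,\prime})^{1/d}dt\leq R((y_j/x_j)^{1/d}-1)\leq Rh/(dx_j)$ and, on the remaining interval, $\int(\prod_i\gamma_i^{y,\prime})^{1/d}dt\leq (\prod_i x_i)^{1/d}$. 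Combining these with the algebraic inequality
\begin{equation*}
\frac{(x_1\cdots x_d)^{1/d}}{x_j}=\frac{(\prod_{i\neq j}x_i)^{1/d}}{x_j^{(d-1)/d}}\leq \frac{M^{d-1}}{R^{d-1}},
\end{equation*}
which follows from $x_j\geq R^d/M^{d-1}$, should yield $u(y)-u(x)\leq C_d\|\rho^{1/d}\|_{C^{0,1}}M^{d-1}R^{-(d-1)}h$.

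The main obstacle will be producing the sharp power $R^{-(d-1)}$ rather than the weaker $R^{-d}$: naively estimating $1/x_j\leq M^{d-1}/R^d$ and the remaining integral by $M$ yields a spurious extra factor of $M/R$. The remedy is to package the $R/x_j$ coming from the delay and the integral on the remaining interval together into a single factor $(x_1\cdots x_d)^{1/d}/x_j$ and apply the algebraic bound above; the AM-GM growth of $(\prod_i\gamma_i^y)^{1/d}$ is what makes this packaging possible. A secondary technical point is that the rescaled path enters $\Omega_{R,M}$ slightly later than $\gamma^y$, but the size of this gap is controlled by exactly the same AM-GM inequality and is absorbed in the delay contribution.
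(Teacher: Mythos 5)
Your proposal is correct and follows essentially the same route as the paper: use the variational representation $U(x)=\sup_{\gamma(1)\leq x}J(\gamma)$ with $g=\rho\mathbbm{1}_{\Omega_{R,M}}$ (so $u=\tfrac1d U$ on $\Omega_{R,M}$ by the comparison principle), reduce to a shift $y=x+he_j$, build a competitor for $U(x)$ by rescaling only the $j$-th coordinate of a near-optimal path for $U(y)$, split the cost increment into a Lipschitz-in-$\rho$ term and a volume/Jacobian term, integrate via the AM--GM inequality $\int_0^1(\prod\gamma_i')^{1/d}\,dt\leq(\prod(\gamma_i(1)-\gamma_i(0)))^{1/d}$, and close with $\max_{\ov\Omega_{R,M}}(x_1\cdots x_d)^{1/d}/x_j = M^{d-1}R^{-(d-1)}$. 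The reduction to coordinate shifts via $z=x\vee y$ and monotonicity is a harmless preliminary that the paper omits by treating shifts directly.

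The one place where you go beyond the paper's write-up is the explicit ``delay'' term: the rescaled path $\tilde\gamma$ starts strictly below $\partial_{R,M}\Omega$ (its product of coordinates at time $0$ equals $(x_j/y_j)R^d<R^d$), so $g(\tilde\gamma)=0$ on an initial interval, and $J(\tilde\gamma)$ is strictly less than $\int_0^1\rho^{1/d}(\tilde\gamma)[\tilde\gamma_1'\cdots\tilde\gamma_d']^{1/d}\,dt$. The paper's display implicitly identifies these two quantities, which silently drops the delay contribution; since one needs an \emph{upper} bound on $J(\gamma)-J(\tilde\gamma)$, this drop works in the wrong direction. Your treatment --- using $\tfrac{d}{dt}(\prod\gamma_i)^{1/d}\geq(\prod\gamma_i')^{1/d}$ to bound the delay contribution by $\rhomax^{1/d}R((y_j/x_j)^{1/d}-1)\leq C\,\rhomax^{1/d}\,Rh/x_j$ and then folding the $R/x_j$ into the same package $(x_1\cdots x_d)^{1/d}/x_j\leq M^{d-1}R^{-(d-1)}$ --- is exactly the right fix, and it is what keeps the exponent at $d-1$ rather than $d$. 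So the proposal is sound and, on this point, is actually tighter than the argument as printed.
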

\begin{proof}
Let $U$ be given by \eqref{eq:variational} where $g = \rho \mathbbm{1}_{\Omega_{R,M}}$. In light of the preceding discussion, it is enough to show that
\begin{equation*}
     [U]_{C^{0,1}(\ov{\Omega}_{R,M})} \leq  C_d M^{d-1} R^{-(d-1)} \norm{\rho^{1/d}}_{C^{0,1}(\ov{\Omega}_{R,M})}.
\end{equation*} 
Let $x\in \ov{\Omega}_{R,M}$ and set $f = \rho^{1/d}$. It suffices to show that
\begin{align}
    U(x+he_i) - U(x) \leq C_d h M^{d-1} R^{-(d-1)} \norm{f}_{C^{0,1}(\ov{\Omega}_{R,M})}
\end{align}
when $1\leq i \leq d$ and  $h>0$ is sufficiently small. Given $\vare > 0$, let $\gamma \in \mathcal{A}$ such that $\gamma(1) \leq x + he_i$ and $U(x+he_i) \leq J(\gamma) + \vare$. Without loss of generality, we may assume that $\gamma(0) \in \partial_{R,M}\Omega$, $\gamma(1)=x+he_i$, and $\gamma_i'(t) > 0$ for $t\in [0,1]$ and $1 \leq i \leq d$. Let $\Phi(z) = \left(z_1,\ldots \frac{x_i}{x_i + h} z_i, \ldots z_d \right)$ and set $\ov{\gamma} = \Phi(\gamma)$. By construction, $\ov{\gamma}$ satisfies $\ov{\gamma}(1) = x$, $\ov{\gamma}_i(t) = \frac{x_i}{x_i + h}\gamma_i(t)$, and $\ov{\gamma}_j(t) = \gamma_j(t)$ for $j\neq i$. As $J(\ov{\gamma}) \leq u(x)$, we have
\begin{equation*}
    U(x+he_i) - U(x) \leq J(\gamma) - J(\ov{\gamma}) + \vare.
\end{equation*}
A simple calculation shows that $\abs{z - \Phi(z)} \leq \abs{\frac{hz_i}{h+x_i}} \leq Ch$ for $z\in [0,2x]^d$. Hence, we have
\begin{equation}
    \abs{\gamma(t) - \ov{\gamma}(t)} \leq \abs{\frac{h\gamma_i(t)}{h+x_i}} \leq \frac{h x_i}{h+x_i}.
\end{equation}
The above gives us
\begin{equation}
    \abs{f(\gamma(t)) - f(\ov{\gamma}(t))} \leq [f]_{C^{0,1}(\ov{\Omega}_{R,M})} \abs{\frac{h\gamma_i(t)}{h+x_i}} \\
    \leq hx_i [f]_{C^{0,1}(\ov{\Omega}_{R,M})} \frac{1}{x_i} \\ 
    \leq h [f]_{C^{0,1}(\ov{\Omega}_{R,M})}.
\end{equation}
We have
\begin{align*}
    J(\gamma) - J(\ov{\gamma}) &= \int_0^1 \left( f(\gamma(t))\left[\gamma_1'(t)\cdots \gamma_d'(t)  \right]^{1/d} - f(\ov{\gamma}(t))\left[\ov{\gamma}_1'(t)\cdots \ov{\gamma}_d'(t)  \right]^{1/d} \right)dt \\
    &= \int_0^1 \left( f(\gamma(t)) - f(\ov{\gamma}(t))\left[\frac{\ov{\gamma}_1'(t)\ldots \ov{\gamma}_d'(t)}{\gamma_1'(t)\cdots \gamma_d'(t)}  \right]^{1/d} \right)\left[\gamma_1'(t)\ldots \gamma_d'(t)  \right]^{1/d} dt \\
    &= \int_0^1  \left( f(\gamma(t)) - \left(\frac{x_i}{x_i + h}\right)^{1/d} f(\ov{\gamma}(t)) \right)\left[\gamma_1'(t)\cdots \gamma_d'(t)  \right]^{1/d} dt.
\end{align*}
Furthermore,
\begin{align*}
 \left( f(\gamma(t)) - \left(\frac{x_i}{x_i + h}\right)^{1/d} f(\ov{\gamma}(t)) \right) &=  \left( f(\gamma(t)) - f(\ov{\gamma}(t)) \right) + \left(1 - \left(\frac{x_i}{x_i + h} \right)^{1/d} \right)f(\ov{\gamma}(t)) \\
 &\leq \left( f(\gamma(t)) - f(\ov{\gamma}(t)) \right) + \left(1 - \frac{x_i}{x_i + h}  \right)f(\ov{\gamma}(t)) \\
 &\leq \left( f(\gamma(t)) - f(\ov{\gamma}(t)) \right) + \frac{h}{x_i} f(\ov{\gamma}(t)) \\
 &\leq h \norm{f}_{C^{0,1}(\ov{\Omega}_{R,M})} + \norm{f}_{C^{0,1}(\ov{\Omega}_{R,M})}\frac{h}{x_i} \\
 &=  h(1 + x_i^{-1}) \norm{f}_{C^{0,1}(\ov{\Omega}_{R,M})} 
\end{align*}
We conclude that
\begin{align*}
    J(\gamma) - J(\ov{\gamma}) &\leq h (1+x_i^{-1}) \norm{f}_{C^{0,1}(\ov{\Omega}_{R,M})}\int_0^1 \left[\gamma_1'(t)\ldots \gamma_d'(t)  \right]^{1/d} \\
    &\leq h (1+x_i^{-1}) \norm{f}_{C^{0,1}(\ov{\Omega}_{R,M})}\prod_{j=1}^d (\gamma_j(1) - \gamma_j(0))^{1/d} \\
    &\leq 2h \norm{f}_{C^{0,1}(\ov{\Omega}_{R,M})}\max_{x\in \ov{\Omega}_{R,M}}  \frac{(x_1\ldots x_d)^{1/d}}{x_i}.
\end{align*}
Observe that the maximum value of $\frac{(x_1\ldots x_d)^{1/d}}{x_i}$ over $\ov{\Omega}_{R,M}$ is attained when $x_j = M$ for $j\neq i$ and $x_i = R^d M^{-(d-1)}$, we have
\begin{align*}
    \max_{x\in \ov{\Omega}_{R,M}} \frac{(x_1\ldots x_d)^{1/d}}{x_i} = R^{-(d-1)} M^{d-1}.
\end{align*}
We conclude that for every $\vare > 0$ and $1 \leq i \leq d$ we have
\begin{equation*}
     U(x+he_i) - U(x) \leq \vare + C h M^{d-1} R^{-(d-1)} \norm{f}_{C^{0,1}(\ov{\Omega}_{R,M})}.
\end{equation*}
and consequently that
\begin{equation*}
     [U]_{C^{0,1}(\ov{\Omega}_{R,M})} \leq C M^{d-1} R^{-(d-1)} \norm{f}_{C^{0,1}(\ov{\Omega}_{R,M})}. \qedhere
\end{equation*}
\end{proof}

\section{Rates of convergence for the longest chain problem}\label{sec:longestchain} 
As discussed in Section \ref{sec:introduction}, nondominated sorting is equivalent to the problem of finding the length of a longest chain in a Poisson point process with respect to the coordinatewise partial order. Given $n \in \N$ and $\rho \in C(\R)$ satisfying \eqref{eq:rhobounds}, let $X_{n\rho}$ denote a Poisson point process on $\R^d$ with intensity $n\rho$. Given a finite set $A \subset \R^d$, let $\ell(A)$ denote the length of the longest chain in the set $A$. Then the Pareto-depth function $U_n$ in $\R^d$ is given by $U_n(x) = \ell([0,x] \cap X_{n\rho})$ where $[0,x] = [0,x_1]\times \ldots \times [0,x_d]$. The scaled Pareto-depth function is defined by $u_n(x) = \frac{d}{c_d} n^{-1/d}U_n(x)$ where $c_d$ is given by \eqref{eq:definitionofcd}. When $S \subset \R^d$ is bounded and Borel measurable, we write $u_n(S)$ to denote $\frac{d}{c_d} n^{-1/d}\ell(S \cap X_{n\rho})$ and $\abs{S}$ to denote its Lebesgue measure. When $\rho$ is constant and $S$ is a simplex of the form $\set{x\in (-\infty,0]^d : 1+x\cdot q \geq 0}$ with $q\in \R^d_+$, one can show that
\begin{align*}
    \lim_{n\to \infty} u_n(S) = d\rho^{1/d} \abs{S}^{1/d} \text{ a.s. }
\end{align*}
In this section, we establish explicit rates of convergence for the length of the longest chain in rectangles and simplices. We begin by stating a simple property of Poisson processes, whose proof is found in \cite{kingman1992poisson}.
\begin{lemma}\label{lem:poissonproperties}
Let $X_\rho$ be a Poisson process on $\R^d$ with intensity function $\rho$, where $\rho\in L^1_{loc}(\R^d)$ is nonnegative. Then given $g_1,g_2\in L^1_{loc}(\R^d)$ with $0\leq g_1\leq \rho \leq g_2$, there exist Poisson point processes $X_{g_1}$ and $X_{g_2}$ such that $X_{g_1} \subseteq X_\rho \subseteq X_{g_2}$.
\end{lemma}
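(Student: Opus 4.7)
The plan is to construct the processes $X_{g_1}$ and $X_{g_2}$ explicitly by coupling to $X_\rho$, using the standard thinning and superposition properties of Poisson processes (see, e.g., Kingman \cite{kingman1992poisson}).

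For the inclusion $X_{g_1} \subseteq X_\rho$, I would use \emph{thinning}. Define the retention probability
\begin{equation*}
p(x) = \begin{cases} g_1(x)/\rho(x) & \text{if } \rho(x) > 0,\\ 0 & \text{if } \rho(x) = 0,\end{cases}
\end{equation*}
which is well-defined and satisfies $0 \le p(x) \le 1$ because $0 \le g_1 \le \rho$, and note that $\{g_1 > 0\} \subseteq \{\rho > 0\}$ up to a null set. For each point $x \in X_\rho$, independently keep $x$ with probability $p(x)$ and discard otherwise, and let $X_{g_1}$ be the collection of retained points. By the standard thinning theorem for Poisson processes, $X_{g_1}$ is itself a Poisson point process on $\R^d$ with intensity $p(x)\rho(x) = g_1(x)$, and by construction $X_{g_1} \subseteq X_\rho$.

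For the inclusion $X_\rho \subseteq X_{g_2}$, I would use \emph{superposition}. Since $g_2 - \rho \geq 0$ and lies in $L^1_{loc}(\R^d)$, there exists on some enlarged probability space a Poisson point process $Y$ with intensity $g_2 - \rho$ that is independent of $X_\rho$. Define $X_{g_2} = X_\rho \cup Y$. By the superposition theorem for independent Poisson processes, $X_{g_2}$ is a Poisson point process with intensity $\rho + (g_2 - \rho) = g_2$, and by construction $X_\rho \subseteq X_{g_2}$.

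There is no real obstacle here — the only small bookkeeping point is to verify that the thinning and superposition can be performed simultaneously on a common probability space so that both inclusions $X_{g_1} \subseteq X_\rho \subseteq X_{g_2}$ hold for the \emph{same} realization of $X_\rho$. This is immediate by first enlarging the probability space to carry the independent marks used for thinning and the independent process $Y$, and then defining $X_{g_1}$ and $X_{g_2}$ from $X_\rho$ as above.
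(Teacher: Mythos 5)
Your proof is correct and is precisely the standard argument (thinning for the lower coupling, superposition for the upper) that the paper implicitly invokes by citing Kingman's book without reproducing the details. The brief remark about enlarging the probability space so that the thinning marks and the independent process $Y$ coexist with the same realization of $X_\rho$ is exactly the right bookkeeping point to flag.
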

The following result is proved in \cite{bollobas1992height} by Bollob\'as and Brightwell.
\begin{theorem}\label{thm:BollobasbrightwellOrig}
Let $X_{n}$ be a Poisson point process on $[0,1]^d$ with intensity $n$. Then there exists a constant $C_d$ such that for all $n > C_d$ we have
\begin{align*}
\p\left( \abs{U_n([0,1]^d) - \E U_n([0,1]^d) } > C_d t n^{1/2d}\frac{\log n}{ \log \log n}    \right) \leq 4t^2 \exp(-t^2)
\end{align*}
for all $t$ satisfying $2 < t < \frac{ n^{1/2d}}{\log \log n}$. Furthermore, 
 \begin{equation*}
        c_d n^{1/d} \geq \E U_n([0,1]^d) \geq c_d n^{1/d} - C_d n^{1/2d}\frac{\log^{3/2} n}{\log \log n}
    \end{equation*}
    where $c_d$ is given by
    \begin{align*}
        c_d = \lim_{n\to \infty} n^{-1/d}\ell(X_{n}  \cap [0,1]^d) \text{ a.s. }
    \end{align*}
\end{theorem}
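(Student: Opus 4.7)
\medskip

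\textbf{Proof proposal.} The plan is to combine Kingman's subadditive ergodic theorem with a martingale concentration argument based on Azuma's inequality applied to a two-level partition of the cube. The almost-sure existence of $c_d$ is a consequence of the superadditivity property $\ell(A\cup B) \geq \ell(A) + \ell(B)$ whenever every point of $A$ coordinatewise precedes every point of $B$, applied along the main diagonal; it also yields the upper bound $\E U_n([0,1]^d) \leq c_d n^{1/d}$ via splitting the cube into $k^d$ congruent subcubes along the diagonal and taking $k\to\infty$ after $n\to\infty$. The nontrivial work is in the concentration bound and the quantitative lower bound on the expectation.

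For the concentration, I would fix an integer scale $m$ (to be optimized) and partition $[0,1]^d$ into $m^d$ axis-aligned subcubes $Q_1,\ldots,Q_{m^d}$ of side $1/m$, ordered arbitrarily. Let $\mathcal{F}_i$ be the $\sigma$-algebra generated by $X_n \cap (Q_1\cup\cdots\cup Q_i)$, and let $M_i = \E[U_n([0,1]^d)\mid \mathcal{F}_i]$ so that $M_0 = \E U_n$ and $M_{m^d} = U_n$. The key observation is that modifying the point configuration inside a single subcube $Q_i$ can change $U_n$ by at most the length of the longest chain of $X_n\cap Q_i$, because any longest chain in $[0,1]^d$ uses at most $\ell(X_n\cap Q_i)$ of its vertices inside $Q_i$, and these can be independently removed or replaced. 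This gives a pointwise bound on the martingale increments of the form $|M_i - M_{i-1}| \leq \E[\ell(X_n\cap Q_i)\mid \mathcal{F}_{i-1}] + \ell(X_n\cap Q_i)$ (after symmetrizing).

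Each $\ell(X_n\cap Q_i)$ is distributed (up to lower-order fluctuations) as the longest chain of a Poisson process with intensity $n/m^d$ on a unit cube, hence typically of order $c_d (n/m^d)^{1/d} = c_d n^{1/d}/m$. The second step is therefore to condition on the high-probability event that every $\ell(X_n\cap Q_i)$ is bounded by $C_d n^{1/d}/m$ times a logarithmic correction; by a union bound, Markov, and the upper bound already established, this event has probability at least $1 - C_d m^d n^{-k}$ for suitably chosen constants. On this event, Azuma's inequality gives
\begin{equation*}
\p\left(|U_n - \E U_n| > s\right) \leq 2\exp\!\left(-\frac{s^2}{C_d m^d(n^{1/d}/m)^2 (\log n)^{O(1)}}\right) = 2\exp\!\left(-\frac{s^2}{C_d m^{d-2} n^{2/d} (\log n)^{O(1)}}\right).
\end{equation*}
Choosing $m$ so that $m^{d-2}(\log n)^{O(1)}$ is balanced against the desired tail, i.e.\ setting $s = C_d t n^{1/2d}\log n/\log\log n$, yields the stated tail $4t^2 \exp(-t^2)$ after tracking the log factors that arise from both the union bound and the subcube chain length estimates.

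For the quantitative lower bound on $\E U_n$, the idea is to feed the concentration inequality back into the superadditivity: along the diagonal one has $U_n([0,1]^d) \geq \sum_{j=1}^k \ell(X_n \cap Q_j^{\mathrm{diag}})$ where the $Q_j^{\mathrm{diag}}$ are $k$ disjoint diagonal subcubes of side $1/k$, so $\E U_n \geq k \cdot \E \ell(X_{n/k^d}\cap [0,1/k]^d)$. By iterating on $k$ and using the concentration inequality to control the difference between the normalized expectation $n^{-1/d}\E U_n$ and its limit $c_d$, the gap closes at the rate $n^{1/2d}(\log n)^{3/2}/\log\log n$. The principal obstacle is the delicate choice of $m$ (and of the truncation threshold for $\ell(X_n\cap Q_i)$): if $m$ is too small, the martingale differences are too large; if $m$ is too large, the union bound over subcubes and the in-subcube fluctuations overwhelm the gain, and either the power of $n$ or the log factor degrades. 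Tuning this two-parameter balance to land on exactly $n^{1/2d}\log n /\log\log n$ is the quantitative heart of the argument.
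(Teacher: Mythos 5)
The paper does not prove this statement; it is cited verbatim from Bollob\'as and Brightwell (1992), so there is no internal argument to compare against.

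Your proposal, however, has a genuine quantitative gap in the concentration step. With a partition into $m^d$ congruent subcubes of side $1/m$, the Doob martingale $M_i=\E[U_n\mid \mathcal{F}_i]$ has $N=m^d$ steps, and on the truncation event each increment is bounded by $T\approx n^{1/d}/m$ up to logarithmic corrections. Azuma's inequality then yields a tail of the form $\exp\!\bigl(-s^2/(2NT^2)\bigr)$ with $NT^2 = m^{d-2}n^{2/d}$, exactly as you compute. But for $d\geq 2$ and any integer $m\geq 1$ one has $m^{d-2}n^{2/d}\geq n^{2/d}$: equality holds for every $m$ when $d=2$, and the quantity is increasing in $m$ when $d\geq 3$, so the minimum is attained at $m=1$. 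Consequently the smallest $s$ for which this martingale certifies nontrivial concentration is of order $n^{1/d}$ --- the same order as $\E U_n$ itself. Plugging in $s = C_d t\, n^{1/2d}\log n/\log\log n$ requires $m^{d-2}\lesssim n^{-1/d}$ (up to polylogarithms), which has no solution in $m\geq 1$. The ``two-parameter balance'' you invoke simply does not exist within this decomposition, so the concentration bound is not obtained. Since your proposed lower bound on $\E U_n$ feeds the (unestablished) concentration estimate back into the superadditive recursion, it inherits the same gap. The Bollob\'as--Brightwell proof does use Azuma's inequality, but with a martingale adapted to the chain structure (exposing the process in layers tied to antichain boundaries rather than in congruent subcubes) so that the number of steps and the per-step increment obey a fundamentally different scaling; that is the missing ingredient your sketch cannot supply.
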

Next, we extend Theorem \ref{thm:BollobasbrightwellOrig} to a Poisson process with intensity $n\rho$ where $\rho > 0$ is a constant.
\begin{theorem}\label{thm:Bollobasbrightwell}
Let $X_{n \rho}$ be a Poisson point process on $[0,1]^d$ where $\rho > 0$ is a constant. Then for all $n > C_d\rho^{-1}$ and all $t$ satisfying $2 < t < \frac{(\rho n)^{1/2d}}{\log \log \rho n}$ we have
\begin{align*}
\p\left( \abs{u_n([0,1]^d) - \E u_n([0,1]^d) } > C_d n^{-1/2d}\rho^{1/2d} t\frac{\log \rho n}{ \log \log \rho n}    \right) \leq 4t^2 \exp(-t^2).
\end{align*}
Furthermore,
\begin{equation*}
       d\rho^{1/d} \geq \E u_n([0,1]^d) \geq d\rho^{1/d} - C_d \rho^{1/2d} n^{-1/2d}\frac{\log^{3/2}\rho n}{\log \log \rho n}.
\end{equation*}
\end{theorem}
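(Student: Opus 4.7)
The plan is to reduce directly to Theorem \ref{thm:BollobasbrightwellOrig} by treating $m := n\rho$ as the intensity parameter. The key point is that the Poisson intensity in Bollob\'as and Brightwell's result need not be an integer: both their statement and proof apply verbatim to any positive real intensity, since the Poisson process and the underlying Azuma-type estimate make no use of integrality. Thus $X_{n\rho}$ on $[0,1]^d$ is simply a Poisson process of intensity $m$, and I would apply Theorem \ref{thm:BollobasbrightwellOrig} with this $m$ in place of $n$, then rescale by the factor $\frac{d}{c_d} n^{-1/d}$ to pass from the longest chain length $\ell(X_{n\rho}\cap[0,1]^d)$ to $u_n([0,1]^d)$.

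First I would verify that the hypotheses $m > C_d$ and $2 < t < m^{1/2d}/\log\log m$ of Bollob\'as--Brightwell translate into $n > C_d\rho^{-1}$ and $2 < t < (n\rho)^{1/2d}/\log\log n\rho$. The concentration inequality then reads
\[
\p\!\left(\abs{\ell(X_{n\rho}\cap[0,1]^d) - \E\,\ell(X_{n\rho}\cap[0,1]^d)} > C_d\, t\, (n\rho)^{1/2d}\, \frac{\log n\rho}{\log\log n\rho}\right) \leq 4t^2 e^{-t^2},
\]
while the expectation bound becomes
\[
c_d(n\rho)^{1/d} \geq \E\,\ell(X_{n\rho}\cap[0,1]^d) \geq c_d(n\rho)^{1/d} - C_d(n\rho)^{1/2d}\,\frac{\log^{3/2} n\rho}{\log\log n\rho}.
\]
Multiplying both estimates by $\frac{d}{c_d} n^{-1/d}$ and using the scaling identities $(n\rho)^{1/d} n^{-1/d} = \rho^{1/d}$ and $(n\rho)^{1/2d} n^{-1/d} = n^{-1/2d}\rho^{1/2d}$ converts these into exactly the two conclusions of Theorem \ref{thm:Bollobasbrightwell}.

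The only subtle point is the appeal to Theorem \ref{thm:BollobasbrightwellOrig} at non-integer intensity. If one prefers not to invoke it in that generality, I would instead sandwich $X_{n\rho}$ between two Poisson processes of integer intensity via Lemma \ref{lem:poissonproperties}: choose integers $m_- \leq n\rho \leq m_+$ with $m_+ - m_- \leq 1$, couple so that $X_{m_-} \subseteq X_{n\rho} \subseteq X_{m_+}$, and apply Theorem \ref{thm:BollobasbrightwellOrig} to both $m_\pm$. Since $(m_+)^{1/d} - (m_-)^{1/d}$ and $(m_+)^{1/2d} - (m_-)^{1/2d}$ are dominated by the fluctuation scale that already appears in the bound, the discrepancy is absorbed into the constant $C_d$ and the lower-order logarithmic factors. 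I do not expect any real obstacle beyond this bookkeeping; the proof is essentially a change of scale, with the only delicacy being the verification that the range restriction on $t$ and the lower bound on $n$ remain consistent after the substitution $m \mapsto n\rho$.
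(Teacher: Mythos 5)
Your proof is correct and essentially identical to the paper's: both apply Theorem \ref{thm:BollobasbrightwellOrig} at intensity $m = n\rho$ and rescale by $\frac{d}{c_d}n^{-1/d}$, using $(n\rho)^{1/d}n^{-1/d} = \rho^{1/d}$ and $(n\rho)^{1/2d}n^{-1/d} = n^{-1/2d}\rho^{1/2d}$. The extra remark about non-integer intensity (and the fallback via Lemma \ref{lem:poissonproperties}) is sensible bookkeeping that the paper leaves implicit.
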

\begin{proof}
Replace $n$ by $\rho n$ in Theorem \ref{thm:BollobasbrightwellOrig}. Also note that
\begin{equation*}
        \lim_{n\to \infty}  n^{-1/d}\ell(X_{\rho n}  \cap [0,1]^d) = c_d \rho^{1/d} \text{ a.s. } \qedhere
    \end{equation*}
\end{proof}
Next we establish rates of convergence for the longest chain problem in a rectangular box.
\begin{theorem}\label{thm:rectanglerate}
Let $X_{n\rho}$ denote a Poisson point process on $\R^d$ with intensity $n\rho$ where $\rho \in C(\R^d)$ satisfies \eqref{eq:rhobounds}. Given $x,y\in \R^d$ with $x_i < y_i$ for $i=1,\ldots ,d$, let $R = [x,y] := \set{w \in \R^d: x_i \leq w_i \leq y_i \text{ for } i=1,\ldots ,d}$.
\begin{enumerate}
    \item[(a)]
    For all $n > C_d(\sup_R \rho)^{-1} \abs{R}^{-1}$ and $t$ satisfying 
\begin{equation*}
    C_d < t < C_d n^{1/2d} (\sup_R \rho)^{1/2d} \abs{R}^{1/2d} \frac{1}{\log \log n(\sup_R \rho)\abs{R}}
\end{equation*} 
we have
    \begin{align*}
    \p\left(u_n(R) - d(\sup_R \rho)^{1/d}\abs{R}^{1/d} > C_d t n^{-1/2d}(\sup_R \rho)^{1/2d} \abs{R}^{1/2d} \frac{\log^{3/2} (n\abs{R}(\sup_R \rho))}{\log \log (n\abs{R}(\sup_R \rho))} \right) \\
    \leq 4 t^2\exp(-t^2).
    \end{align*}
    
    \item[(b)]
    For all $n > C_d(\inf_R \rho)^{-1} \abs{R}^{-1}$ and $t$ satisfying 
\begin{equation*}
    \log^{1/2}(n(\inf_R \rho)\abs{R}) < t < (\inf_R \rho)^{1/2d} n^{1/2d} \abs{R}^{1/2d} \frac{1}{\log \log n(\inf_R \rho)\abs{R}}
\end{equation*}
we have
    \begin{align*}
    \p\left(u_n(R) - d(\inf_R \rho)^{1/d}\abs{R}^{1/d} < - C_d t n^{-1/2d} (\inf_R \rho)^{1/2d} \abs{R}^{1/2d} \frac{\log (n\abs{R}(\inf_R \rho))}{\log \log (n\abs{R}(\inf_R \rho))}   \right) \\
    \leq 4 t^2\exp(-t^2).
    \end{align*}
\end{enumerate}

\end{theorem}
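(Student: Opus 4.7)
The plan is to reduce both bounds to the constant-intensity estimate Theorem \ref{thm:Bollobasbrightwell} via two independent steps: an intensity-domination coupling supplied by Lemma \ref{lem:poissonproperties}, and an axis-aligned affine rescaling sending $R$ to $[0,1]^d$. Set $\rho^+ = \sup_R \rho$ and $\rho^- = \inf_R \rho$. Since $n\rho^- \leq n\rho \leq n\rho^+$ on $R$, choosing $g_1 = n\rho^-\mathbbm{1}_R$ and an appropriate $g_2$ agreeing with $n\rho^+$ on $R$ in Lemma \ref{lem:poissonproperties} produces Poisson processes $\tilde{X}^\pm$ with constant intensities $n\rho^\pm$ on $R$ satisfying $\tilde{X}^- \subseteq X_{n\rho}\cap R \subseteq \tilde{X}^+$. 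This immediately sandwiches the chain length as $\ell(\tilde{X}^-) \leq \ell(X_{n\rho}\cap R) \leq \ell(\tilde{X}^+)$.

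Next I apply the axis-aligned map $\Phi : R \to [0,1]^d$ defined by $\Phi(w)_i = (w_i - x_i)/(y_i - x_i)$. Since $\Phi$ is strictly increasing in each coordinate it preserves the coordinatewise order, so $\ell(\tilde{X}^\pm) = \ell(\Phi(\tilde{X}^\pm))$. The mapping theorem for Poisson processes implies that $\Phi(\tilde{X}^\pm)$ is a Poisson process on $[0,1]^d$ with constant intensity $n|R|\rho^\pm$. Consequently $\frac{d}{c_d}n^{-1/d}\ell(\tilde{X}^\pm)$ has the same distribution as the $u_n([0,1]^d)$ of Theorem \ref{thm:Bollobasbrightwell} at constant intensity $\rho' = |R|\rho^\pm$. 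The allowed ranges of $n$ and $t$ in the present statement are exactly the translations of the hypotheses $n > C_d/\rho'$ and $2 < t < (n\rho')^{1/2d}/\log\log(n\rho')$ under this substitution.

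For part (a), the upper sandwich gives $u_n(R) \leq \frac{d}{c_d}n^{-1/d}\ell(\tilde{X}^+)$, and Theorem \ref{thm:Bollobasbrightwell} supplies the mean bound $\mathbb{E}[\frac{d}{c_d}n^{-1/d}\ell(\tilde{X}^+)] \leq d(\rho^+)^{1/d}|R|^{1/d}$. Hence $u_n(R) - d(\rho^+)^{1/d}|R|^{1/d}$ is dominated by the upper deviation of $\frac{d}{c_d}n^{-1/d}\ell(\tilde{X}^+)$ from its mean, which the concentration inequality of Theorem \ref{thm:Bollobasbrightwell} handles directly (in fact with a $\log$ factor, slightly sharper than the $\log^{3/2}$ stated). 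For part (b), the lower sandwich produces two contributions to $d(\rho^-)^{1/d}|R|^{1/d} - u_n(R)$: the mean-gap $d(|R|\rho^-)^{1/d} - \mathbb{E}[\frac{d}{c_d}n^{-1/d}\ell(\tilde{X}^-)]$, which the lower mean bound controls by $C_d n^{-1/2d}(|R|\rho^-)^{1/2d}\log^{3/2}(n|R|\rho^-)/\log\log(n|R|\rho^-)$, and the concentration fluctuation, of order $n^{-1/2d}(|R|\rho^-)^{1/2d}\,t\log(\cdot)/\log\log(\cdot)$. The hypothesis $t \geq \log^{1/2}(n|R|\rho^-)$ is precisely what is needed for the concentration term to absorb the mean-gap, yielding the clean form stated. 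The argument requires no genuinely hard step; the only care needed is tracking how the substitution $\rho' = |R|\rho^\pm$ propagates through the logarithmic factors and the ranges of validity inherited from Theorem \ref{thm:Bollobasbrightwell}.
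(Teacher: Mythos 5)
Your proposal is correct and follows the same route as the paper: couple via Lemma \ref{lem:poissonproperties} to a constant-intensity process, rescale $R$ to $[0,1]^d$ by a diagonal affine map, and invoke Theorem \ref{thm:Bollobasbrightwell}. The paper only writes out (a) and asserts (b) is analogous; your account of (b) --- that the mean-gap contributes a $\log^{3/2}$ factor which the hypothesis $t \geq \log^{1/2}(n|R|\inf_R\rho)$ lets you absorb into the $t\,\log(\cdot)/\log\log(\cdot)$ fluctuation term --- correctly supplies what the paper leaves implicit, and your observation that (a) actually admits a $\log$ rather than $\log^{3/2}$ factor is also accurate.
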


\begin{proof}
We shall prove only (a), as the proof of (b) is similar. Without loss of generality we may take $R$ to be the rectangle $[0,y]$ with $y \in \R^d_+$. By Lemma \ref{lem:poissonproperties} there exists a Poisson process $\ov{X}_n \supset X_{n\rho}$ on $\R^d$ with intensity function $n\ov{\rho}$ where $\ov{\rho} =  \left(\sup_R \rho\right)\mathbbm{1}_R + \rho \mathbbm{1}_{\R^d \setminus R}$. Given $A \subset \R^d$, let $\ov{u}_n(A) = n^{-1/d}\ell(A\cap \ov{X}_n)$ and set $\Phi(x) = \left(\frac{x_1}{y_1},\ldots \frac{x_d}{y_d}\right)$. Then $Y_n := \Phi(\ov{X}_n)$ is a Poisson process with intensity $n\abs{R}\ov{\rho}$ and $\ov{u}_n(R) = n^{-1/d}\ell([0,1]^d \cap Y_n)$. Let $E$ be the event that
\begin{align*}
\abs{\ov{u}_n(R) - \E \ov{u}_n(R)} \leq C_{d} (\sup_R \rho)^{1/2d} t \abs{R}^{1/2d}n^{-1/2d} \frac{\log n(\sup_R \rho)\abs{R}}{\log \log n(\sup_R \rho)\abs{R}}
\end{align*}
and
\begin{align*}
    0 \geq \E\ov{u}_n(R) - d (\sup_R \rho)^{1/d} \abs{R}^{1/d} \geq - C_{d}(\sup_R \rho)^{1/2d} t \abs{R}^{1/d} n^{-1/2d}  \frac{\log^{3/2}(n(\sup_R \rho)\abs{R})}{\log \log (n(\sup_R \rho)\abs{R})}.
\end{align*}
where $2 < t < \frac{\abs{R}^{1/2d} (\sup_R \rho)^{1/2d} n^{1/2d}}{\log \log (\sup_R \rho)\abs{R} n}$, $n > (\sup_R \rho)^{-1} \abs{R}^{-1}$, and the constant $C_d$ is as in Theorem \ref{thm:Bollobasbrightwell}. By Theorem \ref{thm:Bollobasbrightwell} we have $P(E) \geq 1 - 4t^2 \exp(-t^2)$. Assume that $E$ holds for fixed choices of $t$ and $n$. As $u_n(R) \leq \ov{u}_n(R)$, we have
\begin{align*}
u_n(R) - d(\sup_R \rho)^{1/d}\abs{R}^{1/d} &\leq \ov{u}_n(R) - C_d(\sup_R \rho)^{1/d}\abs{R}^{1/d} \\
&\leq \abs{\ov{u}_n(R) - \E \ov{u}_n(R)} + (\E \ov{u}_n(R) - C_d(\sup_R \rho)^{1/d} \abs{R}^{1/d}) \\
&\leq C_{d} t (\sup_R \rho)^{1/2d} n^{-1/2d} \abs{R}^{1/2d} \frac{ \log^{3/2} n(\sup_R \rho)\abs{R}}{\log \log n(\sup_R \rho)\abs{R}}. \qedhere
\end{align*}
\end{proof}
Now we extend the preceding result to establish rates of convergence for the longest chain in an orthogonal simplex of the form $S_{y,q}$ as in \eqref{eq:gen_simplex}. The lower one-sided rate is easily attained taking the rectangle $R \subset S$ with largest volume and applying Theorem \ref{thm:rectanglerate}. To prove the upper one-sided rate, we embed $S$ into a finite union of rectangles and apply the union bound. The following result verifies the existence of a suitable collection of rectangles.
\begin{lemma}\label{lem:simplexcovering}
Given $y\in \R^d$ and $q \in \R^d_+$, let $S_{y,q}$ be as in \eqref{eq:gen_simplex}. Given $\vare > 0$, there exists a finite collection $\mathcal{R}$ of rectangles covering $S_{y,q}$ satisfying
\begin{align}\label{eq:simplexcovering1}
        C_d\vare \abs{S_{y,q}} \leq \abs{R}^{1/d} \leq \abs{S_{y,q}}^{1/d} + C_d \vare  \abs{S_{y,q}}\text{ for all } R \in \mathcal{R},
    \end{align}
    and
\begin{align}\label{eq:simplexcovering2}
         \dist(z,S_{y,q})\leq C_d \vare \abs{S_{y,q}} \text{ for all } R \in \mathcal{R} \text{ and } z\in R,
    \end{align}
and
\begin{align}\label{eq:simplexcovering3}
        \abs{\mathcal{R}} \leq C_d\vare^{-(d-1)}.
\end{align}
\end{lemma}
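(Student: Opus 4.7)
The plan is to reduce to covering the standard orthogonal simplex $T = \set{u \in \R^d : u_i \geq 0,\, u_1 + \cdots + u_d \leq 1}$, then transfer the covering back by a coordinate rescaling. By translation we may assume $y = 0$. The diagonal map $\Phi(x) = (-x_1/q_1,\ldots,-x_d/q_d)$ sends $S_{0,q}$ bijectively onto $T$ and takes axis-aligned rectangles to axis-aligned rectangles. Since $\Phi^{-1}$ multiplies volumes by $q_1 \cdots q_d = d^d \abs{S_{y,q}}$ in the convention of the paper, estimates on $\abs{R_u}$ in $u$-coordinates transfer to estimates on $\abs{R_x}$ comparable to $\abs{S_{y,q}}$.

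For the covering of $T$, index the rectangles by tuples $\vec k = (k_1,\ldots,k_{d-1})$ of positive integers with $\sigma(\vec k) := \sum_{i<d}(k_i - 1) \leq \lceil 1/\vare \rceil$, and define
\begin{equation*}
    R_{\vec k} = \prod_{i=1}^{d-1}[0, k_i\vare] \times [0,\, 1 - \sigma(\vec k)\vare].
\end{equation*}
For $u \in T$, setting $k_i = \max(1, \lceil u_i/\vare\rceil)$ for $i < d$ gives $u_i \leq k_i\vare$ and $u_d \leq 1 - \sum_{i<d} u_i \leq 1 - \sigma(\vec k)\vare$, so $u \in R_{\vec k}$. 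The number of admissible tuples is the number of compositions of integers up to $1/\vare + d - 1$ into $d-1$ nonnegative parts, which is $O(\vare^{-(d-1)})$, giving \eqref{eq:simplexcovering3}.

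To bound volumes, let $s_i = k_i\vare$ and $c = 1 + (d-1)\vare$, so $\abs{R_{\vec k}} = \prod_{i<d} s_i \cdot (c - \sum_{i<d} s_i)$. By AM-GM $\prod s_i \leq ((\sum s_i)/(d-1))^{d-1}$, and maximizing $F(\tau) = \tau^{d-1}(c-\tau)/(d-1)^{d-1}$ over $\tau = \sum s_i \in [0,c]$ at the critical point $\tau = (d-1)c/d$ yields $F(\tau) \leq (c/d)^d$. Therefore $\abs{R_{\vec k}}^{1/d} \leq (1 + (d-1)\vare)/d$, and rescaling by $\Phi^{-1}$ converts this into the upper bound in \eqref{eq:simplexcovering1}. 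The distance bound \eqref{eq:simplexcovering2} follows from the observation that $u \in R_{\vec k}$ implies $\sum u_i \leq 1 + (d-1)\vare$, so orthogonal projection onto $\set{\sum u_i = 1}$ shows $\dist(u, T) \leq (d-1)\vare/\sqrt d$; rescaling, $\Phi^{-1}$ stretches distances by at most $\max_i q_i$.

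For the lower bound on $\abs{R}^{1/d}$ in \eqref{eq:simplexcovering1}, discard rectangles with $\sigma(\vec k) > 1/\vare - 1$ (those whose last-coordinate side falls below $\vare$) and replace them by a single bounding box near the vertex $(0,\ldots,0,1)$; this does not affect the $O(\vare^{-(d-1)})$ count. The main obstacle is the dimensional bookkeeping when pushing the $u$-space estimates back to $x$-space via $\Phi^{-1}$, particularly in \eqref{eq:simplexcovering2} where $\max_i q_i$ must be related to a characteristic length of $S_{y,q}$; the AM-GM optimization and the lattice count themselves are routine.
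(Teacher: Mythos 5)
Your overall approach — reduce to the standard simplex $T = \{u \geq 0 : \sum_i u_i \leq 1\}$, cover it by $O(\vare^{-(d-1)})$ axis-aligned boxes with sides comparable to $\vare$ in $d-1$ directions, then push forward by a coordinate dilation — matches the paper's strategy, though the specific tiling is different. The paper places a $\vare$-lattice on the bounding hyperplane $P = \{x \geq 0 : \sum_i x_i = 1\}$ and takes boxes $[0, x + \vare\mathds{1}]$ with $x \in P \cap (x_0 + \vare\Z^d)$; by construction every side of every box is at least $\vare$, so the lower bound in \eqref{eq:simplexcovering1} is automatic. You instead lattice the first $d-1$ coordinates and compute the last side from the constraint as $1 - \sigma(\vec k)\vare$, which is a clean alternative, but it does not automatically enforce the lower bound, and this is where your argument breaks.

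The proposed patch for the lower bound is wrong. If $\sigma(\vec k) > 1/\vare - 1$ then, since $(k_i - 1)\vare \leq u_i$, any $u$ assigned to such a box satisfies $\sum_{i<d} u_i \geq \sigma(\vec k)\vare > 1 - \vare$, so these boxes cover the thin slab $\{u \in T : u_d < \vare\}$ next to the face $\{u_d = 0\}$ — not a neighborhood of the vertex $(0,\ldots,0,1)$ as you claim. Moreover a single bounding box for that slab is essentially $[0,1]^{d-1} \times [0,\vare]$, which contains the corner $(1,\ldots,1,0)$ at distance $\approx \sqrt{d-1}$ from $T$, violating \eqref{eq:simplexcovering2}, and has $\abs{R}^{1/d} \approx \vare^{1/d}$, which is the wrong order for \eqref{eq:simplexcovering1}. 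The fix is much simpler and local: replace the last side $1 - \sigma(\vec k)\vare$ by $\max\bigl(\vare,\, 1 - \sigma(\vec k)\vare\bigr)$. Enlarging only helps covering, all sides are now $\geq \vare$ so $\abs{R}^{1/d} \geq \vare$, the AM-GM upper bound goes through with $c = 1 + d\vare$ in place of $1 + (d-1)\vare$, and the distance bound still holds since $\sum u_i \leq 1 + d\vare$ on every box. With that correction your construction is a valid alternative to the paper's.

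On the point you flagged about $\Phi^{-1}$ stretching distances by $\max_i q_i$: you are right that this is not controlled by a power of $\abs{S_{y,q}}$ for general $q$, and the paper's own proof has the same difficulty when it asserts $\abs{\Phi(z) - \Phi(y)} \leq C_d\abs{S_q}^{1/d}\vare$. In practice this is harmless because in the proof of Theorem~\ref{thm:simplexrate} the covering lemma is only invoked for the unit simplex $S_{\mathds{1}}$; general simplices are handled by a dilation of the Poisson process rather than by applying the covering lemma directly. So the $q$-dependence you worried about never actually enters.
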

\begin{proof}
Without loss of generality, we may take $y = 0$ and prove the statements for the simplex $S_q := S_{0,q}$. Letting $\mathds{1}$ denote the ones vector, we first prove the statements for the simplex $S := \set{x\in [0,\infty)^d: x\cdot \mathds{1}\leq 1}$, and then obtain the general result via reflection and scaling. Let $P = \set{x\in [0,\infty)^d: x\cdot \mathds{1} = 1}$. Fix $x_0 \in P$, and let $\mathcal{R} = \set{[0,x+\vare \mathds{1}]: x\in P \cap (x_0 + \vare \Z^d) }$. It is clear from the definition of $\mathcal{R}$ that \eqref{eq:simplexcovering2} and \eqref{eq:simplexcovering3} hold. By the Arithmetic-Geometric Mean Inequality, for all $x\in P$ we have
\begin{align*}
    \vare \leq \prod_{i=1}^d (x_i + \vare)^{1/d} \leq \frac{1}{d} + \vare
\end{align*}
and it follows that \eqref{eq:simplexcovering1} holds. To show that $S \subset \bigcup_{R\in \mathcal{R}} R$, let $y\in P$. Then there exists $y^* \in (x_0 + \vare \Z^d)$ such that $\abs{y_i - y_i^*} \leq \vare$ for $1 \leq i \leq d$. Hence, $y\in [0, y^*]$, and it follows that $S \subset \bigcup_{R\in \mathcal{R}} R$. This concludes the proof for $S$, and we now leverage this result to prove the statement for the simplex $S_q = \set{x\in (-\infty,0]^d: 1 + x\cdot q \geq 0}$. Let $\Phi(x) = (\frac{-x_1}{q_1},\ldots \frac{-x_d}{q_d})$, so $\Phi(S) = S_q$. Applying the proven result for $S$, there exists a collection of rectangles $\mathcal{R}_1$ covering $S$ and satisfying $\abs{\mathcal{R}_1} \leq C_d \vare^{-(d-1)}$, $\abs{R}^{1/d} \leq \frac{1}{d} + \vare$ for all $R\in \mathcal{R}_1$, and $\dist(z,S_1) \leq C_d \vare$ for $z\in R$ and $R \in \mathcal{R}$. Let $\mathcal{R} = \set{\Phi(R): R \in \mathcal{R}_1}$, and we verify that $\mathcal{R}$ satisfies the required properties. We have $\abs{\mathcal{R}} = \abs{\mathcal{R}_1}$, so \eqref{eq:simplexcovering3} holds. To see that \eqref{eq:simplexcovering1} holds, observe that
\begin{align*}
    \abs{\Phi(R)}^{1/d} = d \abs{R}^{1/d} \abs{S_{q}}^{1/d}
\end{align*}
and
\begin{align*}
    d \vare  \abs{S_q}^{1/d}\leq d \abs{R}^{1/d} \abs{S_q}^{1/d} \leq \abs{S_q}^{1/d} + d \vare \abs{S_q}^{1/d}
\end{align*}
To show \eqref{eq:simplexcovering2}, let $z \in R \in \mathcal{R}_1$. Then there exists $y\in S$ such that $\abs{z - y} \leq C_d \vare$. Hence, we have $\abs{\Phi(z) - \Phi(y)} \leq C_d \abs{S_q}^{1/d} \vare$, and \eqref{eq:simplexcovering2} follows.
\end{proof}
Now we prove our main result of this section.
\begin{theorem}\label{thm:simplexrate}
Let $\rho \in C^{0,1}(\R^d)$ satisfy \eqref{eq:rhobounds}, and given $y \in \R^d$ and $q \in \R^d_+$ let $S_{y,q}$ be given by \eqref{eq:gen_simplex}. Assume that $d\abs{S_{y,q}}^{1/d} \leq 1$. Then for all $k\geq 1$ and $n > C_{d,\rho,k}\abs{S_{y,q}}^{-1} \log(n)^{2d}$ we have
\begin{align*}
    \p\left(u_n(S_{y,q}) - d(\sup_{S_{y,q}} \rho)^{1/d} \abs{S_{y,q}}^{1/d} >  C_{d,k,\rho} n^{-1/2d} \abs{S_{y,q}}^{1/2d} \frac{\log^{2} n}{\log \log n}  \right) \leq C_{d,k} n^{-k}
\end{align*}
and 
 \begin{align*}
    \p\left(u_n(S_{y,q}) - d(\inf_{S_{y,q}} \rho)^{1/d} \abs{S_{y,q}}^{1/d} < - C_{d,k,\rho} n^{-1/2d} \abs{S_{y,q}}^{1/2d}  \frac{\log^{2} n}{\log \log n}   \right) \leq C_{d,k} n^{-k}.
\end{align*}
\end{theorem}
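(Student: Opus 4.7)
The plan is to prove the two one-sided bounds separately, invoking Theorem \ref{thm:rectanglerate} together with two geometric facts: (i) the largest axis-aligned rectangle inscribed in $S_{y,q}$ has volume \emph{exactly} $|S_{y,q}|$, giving a sharp lower bound; and (ii) the covering from Lemma \ref{lem:simplexcovering} has a ``shared-corner'' monotonicity property that confines every chain in $S_{y,q}$ to a single rectangle of the covering, giving a sharp upper bound.

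For the lower one-sided rate, translate so that $y=0$ and consider the rectangle $R=\prod_{i=1}^d [-q_i/d,\,0]$. Since $\sum_{i}(q_i/d)q_i^{-1}=1$ we have $R\subset S_{0,q}$, and the equality case of the arithmetic-geometric mean inequality gives $|R|=\prod_i q_i/d^d=|S_{0,q}|$. The containment $R\subset S_{y,q}$ yields $u_n(R)\leq u_n(S_{y,q})$ and $\inf_R\rho\geq\inf_{S_{y,q}}\rho$, so applying Theorem \ref{thm:rectanglerate}(b) to $R$ with $t\sim\sqrt{(k+1)\log n}$ produces the lower one-sided rate with failure probability at most $C_{d,k}n^{-k}$.

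For the upper one-sided rate, apply Lemma \ref{lem:simplexcovering} with a parameter $\varepsilon>0$ to cover $S_{y,q}$ by $M\leq C_d\varepsilon^{-(d-1)}$ rectangles $\{R_j\}$, each of the form $\prod_i [a_{i,j},\,y_i]$, satisfying $|R_j|^{1/d}\leq |S_{y,q}|^{1/d}(1+C_d\varepsilon)$ and $\dist(z,S_{y,q})\leq C_d\varepsilon|S_{y,q}|^{1/d}$ for every $z\in R_j$. The key observation is that every $R_j$ shares the ``upper corner'' $y$ of $S_{y,q}$: if $p_1<p_2<\cdots<p_m$ is any chain in $S_{y,q}\cap X_{n\rho}$ and $p_1\in R_{j_0}$ (such a $j_0$ exists because $\{R_j\}$ covers $S_{y,q}$), then for every index $\ell$ and every coordinate $i$ we have $a_{i,j_0}\leq p_{1,i}\leq p_{\ell,i}\leq y_i$, whence $p_\ell\in R_{j_0}$ for all $\ell$. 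Therefore the entire chain lies in a single $R_{j_0}$, which gives
\[ u_n(S_{y,q})\leq \max_{1\leq j\leq M} u_n(R_j). \]
Applying Theorem \ref{thm:rectanglerate}(a) to each $R_j$ with $t^2\sim(k+d)\log n$ and taking a union bound over the $M$ rectangles bounds the right-hand side with failure probability $\leq C_{d,k}n^{-k}$.

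The final step is balancing error sources. The concentration error delivered by Theorem \ref{thm:rectanglerate}(a) is $O\bigl(n^{-1/2d}|S_{y,q}|^{1/(2d)}\log^2 n/\log\log n\bigr)$; the geometric error $|R_j|^{1/d}-|S_{y,q}|^{1/d}$ contributes $O(\varepsilon|S_{y,q}|^{1/d})$; and using $\rho\in C^{0,1}$ with the Hausdorff bound on $\dist(R_j,S_{y,q})$ gives a Lipschitz error $\bigl((\sup_{R_j}\rho)^{1/d}-(\sup_{S_{y,q}}\rho)^{1/d}\bigr)|R_j|^{1/d}=O(C_\rho\varepsilon|S_{y,q}|^{2/d})$. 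Choosing $\varepsilon$ of order $n^{-1/(2d)}|S_{y,q}|^{-1/(2d)}$ makes these secondary errors comparable to the concentration error, and the hypothesis $n^{1/d}|S_{y,q}|^{1/d}\gtrsim \log^{2}n$ guarantees that $t$ and $n$ lie in the admissible range of Theorem \ref{thm:rectanglerate}. The main obstacle is precisely this parameter balancing; the \emph{conceptual} heart of the argument is the shared-corner monotonicity, which replaces a sum of $M$ chain lengths (which would introduce a catastrophic factor $M\varepsilon^{-(d-1)}$) by a maximum amenable to a single union bound.
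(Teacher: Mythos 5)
Your proposal is correct and follows essentially the same route as the paper: cover $S_{y,q}$ by the rectangles of Lemma~\ref{lem:simplexcovering}, invoke Theorem~\ref{thm:rectanglerate} on each rectangle, take a union bound, and balance $\vare$ against the concentration error. The paper writes the bound $\ov{u}_n(S)\leq \max_{R\in\mathcal{R}}\ov{u}_n(R)$ without comment; your explicit ``shared-corner monotonicity'' observation is exactly what justifies this step (since all rectangles produced by Lemma~\ref{lem:simplexcovering} share the upper corner $y$, any chain in $S_{y,q}$ is trapped in the single rectangle containing its minimal element), and making it explicit is a genuine clarification rather than a new ingredient. Your inscribed-rectangle argument for the lower bound, relying on $|R|=|S_{y,q}|$ under the paper's normalization $|S_p|=p_1\cdots p_d/d^d$, is what the paper has in mind when it says the second statement is ``similar and simpler.'' The only cosmetic difference is that the paper first reduces to the unit simplex by a dilation and rescales at the end, whereas you work directly with $S_{y,q}$; this changes nothing material. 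One small imprecision: avoiding the sum $\sum_j\ell(R_j)$ saves a multiplicative factor of order $\vare^{-(d-1)}$ in the \emph{chain-length estimate}; the union bound over the $M\sim\vare^{-(d-1)}$ events is still needed and is absorbed by choosing $t^2\sim(k+d)\log n$, as you note.
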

\begin{proof}
We present the proof of the first statement only, as the proof of the second is similar and simpler. Without loss of generality, we may take $y = 0$ and prove the result for the simplex $S_q = S_{0,q}$. We first prove the result for the simplex $S := \set{x\in (-\infty, 0]^d : 1 + x\cdot \mathds{1}\geq 0}$, and then obtain the general result via a scaling argument. Given $\vare > 0$, we may apply Lemma \ref{lem:simplexcovering} to conclude there exists a collection $\mathcal{R}$ of rectangular boxes covering $S$ such that $\abs{\mathcal{R}} \leq C_d\vare^{-(d-1)}$, $C_d \vare \leq \abs{R}^{1/d} \leq \frac{1}{d} + C_d\vare$ for each $R \in \mathcal{R}$, and $\dist(y,S) \leq \vare$ for $y\in R \in \mathcal{R}$. Set $\ov{R} = \bigcup_{R\in \mathcal{R}} R$. As $\rho \in C^{0,1}(\R^d)$, we have $\sup_{\ov{R}} \rho \leq \sup_{S} \rho + [\rho]_{C^{0,1}(\R^d)} \vare$ for $y\in \ov{R}$. Let $K = \left(\sup_{S} \rho + [\rho]_{C^{0,1}(\R^d)} \vare\right)$. By Lemma \ref{lem:poissonproperties} there exists a Poisson process $\ov{X}_n \supset X_{n\rho}$ on $\R^d$ with intensity function $n\ov{\rho}$ where $\ov{\rho} = K\mathbbm{1}_{\ov{R}} + \rho \mathbbm{1}_{\R^d \setminus \ov{R}}$. Given $A \subset \R^d$, let $\ov{u}_n(A) = \frac{d}{c_d}n^{-1/d}\ell(A\cap \ov{X}_n)$. As $\ov{X}_n \supset X_{n\rho}$ and $S \subset \ov{R}$, we have $u_n(S) \leq \ov{u}_n(S) \leq \max_{R \in \mathcal{R}} \ov{u}_n(R)$. Let $E_R$ be the event that
\begin{align}\label{eq:Rbound}
    \ov{u}_n(R) - dK^{1/d}\abs{R}^{1/d} \leq C_d \nu K^{1/2d}
\end{align}
where $\nu = tn^{-1/2d} \frac{\log^{3/2} (n K)}{\log \log (nK)}$. For any $\vare < C_d$, $n > C_d\abs{R}^{-1}K^{-1}$, and $2 < t < \frac{(nK)^{1/2d} }{\log \log nK}$, we have $\p(E_R) \geq 1 - 4t^2\exp(-t^2)$ by Theorem \ref{thm:rectanglerate}. Letting $E$ be the event that \eqref{eq:Rbound} holds for all $R\in \mathcal{R}$, we have $\p(E) \geq 1 - C_d \vare^{-d+1}t^2 \exp(-t^2)$ by the union bound. Given $k \geq 1$, set $t = \sqrt{Ck \log(n)}$ for a constant $C$ chosen large enough so $n^{1/2} n^{-Ck} \leq n^{-k}$, and let $\vare = t n^{-1/2d}$. Observe that the hypotheses of Theorem \ref{thm:rectanglerate} are satisfied when $nK > C_{d,k} \log(n)^{2d}$, so we have $\p(E) \geq 1 - C_{d,k} n^{1/2} n^{-Ck} \geq 1 - C_{d,k} n^{-k}$. If $E$ holds, then using $S \subset \ov{R}$, $\abs{R}^{1/d} \leq \frac{1}{d} + C_d \vare$, and \eqref{eq:Rbound}, we have
\begin{align}
    u_n(S) \leq \ov{u}_n(S) &\leq \max_{R\in \mathcal{R}} \ov{u}_n(R) \\
    &\leq K^{1/d} + C_d \nu (K^{1/2d} + K^{1/d}).
\end{align}
Now we obtain the stated result for the simplex $S_q = \set{x\in (-\infty, 0]^d : 1 + q\cdot x \geq 0}$. Let $\Phi(x) = (q_1 x_1,\ldots q_d x_d)$, and observe that $\Phi(S_q) = S$. Then $Y_n := \Phi(X_{n\rho})$ is a Poisson point process of intensity $n\tilde{\rho}$ where $\tilde{\rho} = \abs{\det \Phi}^{-1} n\rho = d^{d} n\rho\abs{S_q}$. As $\Phi$ preserves the length of chains, we have $\ell(X_{n\rho} \cap S_q) = \ell(Y_n \cap S)$. Let $\vare = n^{-1/2d}\sqrt{Ck \log n}$, $\tilde{K} = \left(\sup_{S} \tilde{\rho} + [\tilde{\rho}]_{C^{0,1}(\R^d)} \vare\right)$, $K = \left(\sup_{S_q} \rho + [\rho]_{C^{0,1}(\R^d)} \vare\right)$, and $\nu =  \frac{\vare \log^{3/2} n \tilde{K}}{\log \log n\tilde{K}}$. Let $E$ be the event that
\begin{align}
    u_n(S_q) &\leq \tilde{K}^{1/d} + C_d \nu (\tilde{K}^{1/2d} + \tilde{K}^{1/d}).
\end{align}
If $E$ holds, then using $\tilde{K} = d^d \abs{S_q} K$ and $d \abs{S_q}^{1/d} \leq 1$, we have
\begin{align*}
    u_n(S_q) &\leq \tilde{K}^{1/d} + C_d \nu (\tilde{K}^{1/2d} + \tilde{K}^{1/d}) \\
    &= d \abs{S_q}^{1/d} K^{1/d} + C_d \nu \abs{S_q}^{1/2d} (1 + K^{1/2d}) \\
     &\leq d \abs{S_q}^{1/d} (\sup_S \rho)^{1/d} + C_{d,\rho} \nu \abs{S_q}^{1/2d} \\
     &\leq d \abs{S_q}^{1/d} (\sup_S \rho)^{1/d} + C_{d,k,\rho} n^{-1/2d} \abs{S_q}^{1/2d} \frac{\log^2 n}{\log \log n}.
\end{align*}
Using our result for $S$ and $\ell(X_{n\rho} \cap S_q) = \ell(Y_n \cap S)$, we have $\p(E) \geq 1 - C_{d,k} n^{-k}$ for $n > C_{d,k} \tilde{K}^{-1} \log(n)^{2d}$.


\end{proof}

\section{Lemmas For Proving Convergence Rates}\label{sec:supporting}
In this section we establish our primary lemma for proving Theorems \ref{thm:mainaux} and \ref{thm:mainfull}. In particular, we prove a maximum principle type result that shows that if $\varphi$ is a semiconcave (see Definition \ref{def:semiconvexity}), strictly increasing supersolution of (\ref{eq:PDEaux}), then the maximum of $u_n - \varphi$ occurs in a neighborhood of $\partial_R \Omega$ with high probability. An analogous result holds for $\varphi - u_n$ when $\varphi$ is a semiconvex, strictly increasing subsolution of (\ref{eq:PDEaux}). In proving Lemma \ref{thm:maxnearboundary} we shall employ Lemmas \ref{lem:chaininAn} and \ref{lem:mindisttoboundary}, whose proofs are presented after Lemma \ref{thm:maxnearboundary}.
\begin{lemma}\label{thm:maxnearboundary}
Let $\rho \in C^{0,1}(\R^d)$ satisfy \eqref{eq:rhobounds}. Given $R \in (0,1)$, let $\varphi \in C(\Omega_R)$ be a non-negative function such that $0<\uv{\gamma} \leq \varphi_{x_i} \leq \ov{\gamma}$ holds in the viscosity sense on $\Omega_R$ for some constants $\ov{\gamma} \geq 1$ and $\uv{\gamma} \leq 1$. Given $\alpha \geq 1$, $k \geq 1$, $\delta \in (0,1)$, and $\vare \in (0,1)$, let $R_n = \ov{\gamma}^{1/2} n^{-1/2d}\vare^{-1/2}\frac{\log^{2}( n)}{\log \log ( n)}$. Then there exist constants $C_d \leq 1$ and $C_{d,k,\rho} \geq 1$ such that when $\vare  \leq C_d\min\set{\alpha^{-1} \uv{\gamma}^{2} \ov{\gamma}^{-1}\log(n)^{-2},  R^{d-1} \delta}$ and $1 \geq\lambda \geq C_{d,k,\rho}(R_n + \uv{\gamma}^{-2}\alpha\vare)$ the following statements hold.
\begin{enumerate}
\item[(a)]
Suppose $\varphi$ is semiconcave on $\Omega_R$ with semiconcavity constant $\alpha \geq 1$ and satisfies $H(D \varphi) \geq \rho$ on $\Omega_{R+\delta}$. Assume further that 
\begin{align*}
    \sup_{\Omega_R} \left(u_n - (1+\lambda) \varphi  \right) \geq 2\vare.
\end{align*}
Then there exists a constant $C_{d,\rho,k} \geq 1$ such that for all $n>1$ with $n^{1/d} > C_{d,\rho, k} \vare^{-1} \ov{\gamma} \log(n)^{2}$ we have
\begin{align*}
\p\left( \sup_{\Omega_R} \left(u_n - (1+\lambda) \varphi  \right) = \sup_{\Omega_{R+\delta}} \left(u_n - (1+\lambda) \varphi  \right) \right) \leq C_{d,k} \vare^{-6d } n^{-k}.
\end{align*}
\item[(b)]
Suppose $\varphi$ is semiconvex on $\Omega_R$ with semiconvexity constant $\alpha \geq 1$ and satisfies $H(D \varphi) \leq \rho$ on $\Omega_{R+\delta}$. Then there exists a constant $C_{d,\rho,k} \geq 1$ such that for all $n>1$ with $n^{1/d} >  C_{d,\rho,k}\alpha^{-1}\uv{\gamma}^{2}\vare^{-2}$ we have
\begin{align*}
\p\left( \sup_{\Omega_R}\left((1-\lambda) \varphi - u_n  \right) = \sup_{\Omega_{R+\delta}} \left((1-\lambda) \varphi - u_n  \right) \right) \leq C_{d,k} \vare^{-6d} n^{-k}.
\end{align*}
\end{enumerate}
\end{lemma}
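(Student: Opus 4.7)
The plan is to argue (a) by contradiction and deduce (b) by a symmetric argument; I describe (a) in detail. Suppose the supremum of $u_n - (1+\lambda)\varphi$ over $\Omega_R$, which is at least $2\varepsilon$ by hypothesis, is attained at some $x_0 \in \Omega_{R+\delta}$. The maximum property gives $u_n(x_0) - u_n(x) \geq (1+\lambda)(\varphi(x_0)-\varphi(x))$ for every $x \in [0,x_0]\cap \Omega_R$, so setting
\[
A_n = \{x \in [0,x_0]\cap \Omega_R : u_n(x_0) - u_n(x) \leq \varepsilon\},\quad
A = \{x \in [0,x_0]\cap \Omega_R : (1+\lambda)(\varphi(x_0)-\varphi(x)) \leq \varepsilon\},
\]
we have $A_n \subset A$. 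Lemma \ref{lem:chaininAn}, applied to a trimmed subchain of a longest chain reaching $x_0$, supplies with high probability a chain of length at least $\varepsilon - C n^{-1/d}$ inside $A_n \cap X_{n\rho}$, so that $u_n(A) \geq u_n(A_n) \geq \varepsilon - C n^{-1/d}$; this is the lower bound I aim to contradict.

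The next step approximates $A$ by an orthogonal simplex of the form \eqref{eq:gen_simplex}. Monotonicity $\varphi_{x_i} \geq \underline{\gamma}$ yields $\varphi(x_0) - \varphi(x) \geq \underline{\gamma}\sum_i (x_{0,i} - x_i)$ for $x \leq x_0$, hence $\mathrm{diam}(A) \leq \varepsilon/\underline{\gamma}$. Semiconcavity with constant $\alpha$ provides a supergradient $p$ of $\varphi$ at $x_0$ with $\varphi(x) \leq \varphi(x_0) + p\cdot(x-x_0) + \tfrac{\alpha}{2}\abs{x-x_0}^2$, so for $x \in A$
\[
p\cdot(x_0-x) \leq (\varphi(x_0)-\varphi(x)) + \tfrac{\alpha}{2}\abs{x-x_0}^2 \leq \tilde{\varepsilon} := \frac{\varepsilon}{1+\lambda} + \frac{\alpha \varepsilon^2}{2\underline{\gamma}^2}.
\]
Thus $A \subset S_{x_0, P}$ with side lengths $P_i = \tilde{\varepsilon}/p_i$. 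Because $\varphi$ is a viscosity supersolution, $\prod_i p_i = H(p) \geq \rho(x_0)$, so \eqref{eq:measureSp} gives $\abs{S_{x_0,P}} \leq \tilde{\varepsilon}^d/(d^d \rho(x_0))$, and the Lipschitz regularity of $\rho$ combined with the diameter bound produces
\[
d\left(\sup_{S_{x_0,P}}\rho\right)^{1/d}\abs{S_{x_0,P}}^{1/d} \leq \tilde{\varepsilon}\left(1 + C_\rho\,\varepsilon/\underline{\gamma}\right).
\]

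Applying Theorem \ref{thm:simplexrate} to $S_{x_0,P}$ yields, with probability at least $1 - C_{d,k} n^{-k}$,
\[
u_n(S_{x_0,P}) \leq d\left(\sup_{S_{x_0,P}}\rho\right)^{1/d}\abs{S_{x_0,P}}^{1/d} + C_{d,\rho,k} R_n \varepsilon,
\]
after absorbing $\abs{S_{x_0,P}}^{1/(2d)} \leq C\tilde{\varepsilon}^{1/2}/\rhomin^{1/(2d)}$ and $\tilde{\varepsilon} \leq 2\varepsilon$ (which follows from the smallness assumption on $\varepsilon$). Chaining all inequalities,
\[
\varepsilon - Cn^{-1/d} \leq u_n(A_n) \leq u_n(A) \leq u_n(S_{x_0,P}) \leq \frac{\varepsilon}{1+\lambda} + \frac{\alpha\varepsilon^2}{2\underline{\gamma}^2} + C_{d,\rho,k} R_n \varepsilon,
\]
and since $\lambda/(1+\lambda) \geq \lambda/2$ for $\lambda \leq 1$, rearranging gives $\lambda\varepsilon/2 \leq \alpha\varepsilon^2/(2\underline{\gamma}^2) + C_{d,\rho,k} R_n \varepsilon + C n^{-1/d}$, which contradicts the hypothesis on $\lambda$ for $n$ sufficiently large. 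This analysis conditions on a fixed (random) maximizer $x_0$; the stated probability bound is obtained by discretizing $\Omega_{R+\delta}$ at scale $\sim \varepsilon^c$ for suitable $c$, snapping each candidate maximizer to the grid with Lipschitz errors of $\varphi$, its supergradient, and $\rho$ absorbed into the $R_n\varepsilon$ budget, and then union-bounding over the $O(\varepsilon^{-Cd})$ grid points, producing the $\varepsilon^{-6d}$ prefactor. Part (b) is dual: if $(1-\lambda)\varphi - u_n$ attains its max at an interior $x_0$, then $u_n(x_0)-u_n(x) \leq (1-\lambda)(\varphi(x_0)-\varphi(x))$ for $x \leq x_0$, so the analogous sets satisfy $A \subset A_n$ with $u_n(A_n) \leq \varepsilon + Cn^{-1/d}$ via chain counting, and the semiconvex \emph{lower} quadratic bound $\varphi(x_0)-\varphi(x) \leq p\cdot(x_0-x) + \tfrac{\alpha}{2}\abs{x-x_0}^2$ together with the subsolution condition $H(p)\leq \rho(x_0)$ yields an inscribed simplex $S \subset A$ of volume $\abs{S} \geq \tilde{\varepsilon}^d/(d^d \rho(x_0))$; the lower rate from Theorem \ref{thm:simplexrate} applied to $S$ forces the contradiction.

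The main obstacle is matching all error contributions to the budget $\lambda\varepsilon$: the semiconcavity correction $\alpha\varepsilon^2/\underline{\gamma}^2$ must be dominated by $\lambda\varepsilon$ (giving the hypothesis $\lambda \geq C\alpha\varepsilon/\underline{\gamma}^2$), the simplex fluctuation $R_n \varepsilon$ must likewise be dominated (giving $\lambda \geq CR_n$), and the discretization scale must be tuned so that the Lipschitz snapping errors all land within this same budget while keeping the polynomial prefactor $\varepsilon^{-6d}$. A more subtle point is the simultaneous use of a supergradient $p$ of $\varphi$ at $x_0$ (for the semiconcave Taylor inequality) and the viscosity supersolution condition $H(p)\geq \rho(x_0)$ at the same $p$; compatibility is available because semiconcavity forces $\varphi$ to be twice differentiable almost everywhere with superdifferential reducing to the classical gradient, at which points the viscosity PDE holds classically.
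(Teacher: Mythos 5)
Your outline matches the paper's argument in its main lines: the contradiction setup, the pair of sets $A_n\subset A$, the enclosure of $A$ in an orthogonal simplex via the semiconcave Taylor expansion, the application of Theorem \ref{thm:simplexrate}, and the final contradiction via Proposition \ref{prop:perturb}. You are also correct that semiconcavity provides twice-differentiability almost everywhere, which the paper exploits by replacing the maximizer $x_0$ by a nearby point $x_n$ (at cost $\vare^3$ in the sup) where $D^2\varphi(x_n)\leq \alpha I$ and the classical gradient satisfies the PDE.

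The genuine gap is in how you make the probability bound rigorous. You apply Theorem \ref{thm:simplexrate} directly to the simplex $S_{x_0,P}$, whose base point $x_0$ and parameters $P$ are random (they depend on where $u_n-(1+\lambda)\varphi$ attains its maximum and on the supergradient there), whereas Theorem \ref{thm:simplexrate} is stated only for a fixed, deterministic simplex. You recognize that a discretization and union bound are needed, but the mechanism you propose --- ``snapping each candidate maximizer to the grid with Lipschitz errors of $\varphi$, its supergradient, and $\rho$'' --- does not work: the supergradient of a semiconcave function is \emph{not} Lipschitz in the base point (consider $-|x_1|$, where the superdifferential jumps across $\{x_1=0\}$), so you cannot control how $P$ changes when $x_0$ is snapped to a grid point, and discretizing $\Omega_{R+\delta}$ alone also produces at most $O(\vare^{-3d})$ grid points at scale $\vare^3$, not the $\vare^{-6d}$ you claim. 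The paper instead discretizes the family of simplices itself: it fixes $\mathcal S=\{S_{y,s}:y\in\Omega_R\cap d^{-1/2}\vare^3\Z^d,\; s\in\Gamma_\vare\}$ where $\Gamma_\vare\subset\vare^3\Z^d$ is a lattice of side-length parameters constrained to the a priori range $[(4\ov\gamma)^{-1}\vare,\,4\uv\gamma^{-1}\vare]$ (so $|\mathcal S|\leq C_d\vare^{-3d}\cdot C_d\vare^{-3d}=C_d\vare^{-6d}$), proves the rate for all members of $\mathcal S$ on a single high-probability event $E$ by union bound \emph{before} running the deterministic contradiction argument, and then shows $S_{x_n,p}\subset S_{y,q}$ for some $(y,q)\in\Omega_R^\vare\times\Gamma_\vare$ simply by choosing $y\geq x_n$ and $q\geq p+\vare^3\mathds 1$ from the lattice. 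This enclosure is a one-sided geometric comparison that needs no continuity of the gradient map at all. Without replacing your Lipschitz-snapping step by this simplex-family discretization and containment, the argument does not close.
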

\begin{proof}
First we introduce the notation used throughout the proof. Let $\Omega_R^\vare = \Omega_R \cap d^{-1/2}\vare^3\Z^d$ and we define a collection of simplices $\s = \set{S_{x,s}: x\in \Omega_{R+\delta}^\vare,s\in \Gamma_\vare}$ where $S_{x,s}$ is given by \eqref{eq:gen_simplex} and
\begin{align*}
    \Gamma_\vare = \set{s \in \vare^3\Z^d : (4\ov{\gamma})^{-1}\vare \leq s_i \leq 4\uv{\gamma}^{-1}\vare}.
\end{align*}
Lemma \ref{lem:mindisttoboundary} implies that $S \subseteq \Omega_R$ for all $S \in \mathcal{S}$ when $\vare \leq C_d \delta R^{d-1}$. Let $E_S$ be the event that
\begin{align}\label{eq:simplexbound}
    u_n(S) - d(\sup_S \rho^{1/d})\abs{S}^{1/d} \leq \frac{C_{d,k,\rho} \abs{S}^{1/2d} \log^{2} n}{n^{1/2d} \log \log n}
\end{align}
and let $E$ be the event that $E_S$ holds for all $S \in \mathcal{S}$. For any $n >C_{d,k,\rho} \abs{S}^{-1} \log(n)^{2d}$, we have $\p(E_S) \geq 1-C_{d,k} n^{-k}$ by Theorem \ref{thm:simplexrate}. By choice of $\Gamma_\vare$, we have $(4\ov{\gamma})^{-1}\vare \leq d\abs{S}^{1/d} \leq 4\uv{\gamma}^{-1} \vare$ for $S \in \mathcal{S}$. As we assume that $n^{1/d} > C_{d,\rho,k}\vare^{-1} \ov{\gamma} \log(n)^{2}$, we have $n > C_{d,k,\rho}\abs{S}^{-1} \log(n)^{2d}$ for all $S \in \mathcal{S}$. As $\abs{\Gamma_\vare} \leq C_d \vare^{-3d}$ and $\abs{\Omega_R^\vare} \leq C_d \vare^{-3d}$, we have $\abs{\mathcal{S}} \leq C_d \vare^{-6d}$. By the union bound, we have $\p\left( E \right) \geq 1-C_{d,k} \vare^{-6d} n^{-k}$. For the remainder of the proof we assume that $E$ holds. Then for each $S \in \mathcal{S}$ we have
\begin{align*}
    u_n(S)  \leq d \abs{S}^{1/d} (\sup_S \rho^{1/d}) + \frac{C_{d,k,\rho} \abs{S}^{1/2d} \log^{2}( n)}{n^{1/2d} \log \log ( n)} \leq d\abs{S}^{1/d}(\sup_S \rho^{1/d}) (1 + C_{d,k,\rho} R_n)
\end{align*}
where $R_n = \ov{\gamma}^{1/2} \vare^{-1/2} n^{-1/2d}\frac{\log^{2}( n)}{\log \log ( n)}$. Let $w = (1+\lambda) \varphi$, and we show that
\begin{align*}
    \sup_{\Omega_R} \left(u_n - w  \right) \neq \sup_{\Omega_{R+\delta}} \left(u_n - w  \right).
\end{align*}
Assume for contradiction that
\begin{align*}
    \sup_{\Omega_R} \left(u_n - w  \right) = \sup_{\Omega_{R+\delta}} \left(u_n - w  \right).
\end{align*}
Since $\varphi$ is semiconcave with semiconcavity constant $\alpha$, at almost every $x\in \Omega_R$ we have that $\varphi$ is twice differentiable at $x$ with $D^2 \varphi(x) \leq \alpha I$. Hence, there is $x_n \in \Omega_{R+\delta}$ such that $D^2 \varphi(x_n) \leq \alpha I$ and $u_n(x_n) - w(x_n) + \vare^3 \geq \sup_{\Omega_R} (u_n - w)$. Therefore, we have
\begin{align}\label{eq:touchingproperty}
u_n(y) - u_n(x_{n}) \leq w(y) - w(x_{n}) + \vare^3 \text{ for all } y\in \Omega_R.
\end{align}
As we assume $\varphi \geq 0$ and $\sup_{\Omega_R}(u_n - w) \geq 2\vare$, we have $u_n(x_n) \geq \vare$. We define the sets
\begin{align*}
A_n &= \set{x\in [0,x_n]\cap \Omega_R: u_n(x_n) - u_n(x)\leq \vare} \\
A &= \set{x\in [0,x_n]\cap \Omega_R: w(x_n) - w(x)\leq \vare + \vare^3}.
\end{align*}
By Lemma \ref{lem:chaininAn} we have $u_n(A_n) \geq \vare$. By (\ref{eq:touchingproperty}) we have $A_n \subset A$. As $w_{x_i} = (1+\lambda)\varphi_{x_i} \geq \varphi_{x_i}$, we have $A \subset B(x_n,2 \uv{\gamma}^{-1}\vare)$. By Taylor expansion, we have
\begin{align*}
w(x) &\leq w(x_n) + D w(x_n)\cdot (x-x_n) + (1+\lambda)\alpha \abs{x-x_n}^2 \\
&\leq w(x_n) + D w(x_n)\cdot (x-x_n) + 2 \alpha \abs{x-x_n}^2.
\end{align*}
Hence, when $x\in A$ we have
\begin{align*}
-\vare-\vare^3 \leq w(x) - w(x_n) &\leq D w(x_n )\cdot (x - x_n ) + 2\alpha \abs{x-x_n }^2 \\
&\leq D w(x_n )\cdot (x - x_n ) + C\alpha \uv{\gamma}^{-2}\vare^2.
\end{align*}
Letting $p_i = \frac{C \alpha \vare^2 \uv{\gamma}^{-2} + \vare}{v_{x_i}(x_n)}$, we have $A \subseteq S_{x_n, p}$. We show there exist $y\in \Omega_R^\vare$ and $q\in \Gamma_\vare$ such that $S_{x_{n},p} \subseteq S_{y,q}$. Let $y\in \Omega^{\vare}_R$ such that $x_{n} \leq y$ and $\abs{y - x_n}\leq \vare^3$. Letting $\mathds{1}$ denote the all ones vector, we have $S_{x_n, p} \subseteq S_{y,p+\vare^3 \mathds{1}}$. We may choose $q$ so $p + 2\vare^3 \mathds{1} \geq q \geq p + \vare^3 \mathds{1}$ provided that $\ov{\gamma}^{-1}\vare \leq p_i \leq 4\uv{\gamma}^{-1}\vare - 2\vare^3$ for each $i$, which holds when $\vare \leq C\alpha^{-1} \uv{\gamma}^{2}$ and $\alpha \geq 1$. Then $S_{y,p+\vare^3 \mathds{1}} \subset S_{y,q}$. Using that $(q_1\ldots q_d)^{1/d} = d \abs{S_{y,q}}^{1/d}$, we have
\begin{align*}
\vare &\leq u_n(A_n) \\
&\leq u_n(S_{y,q}) \\
&\leq (1+C_{d,k,\rho} R_n)(\sup_{S_{y,q}} \rho)^{1/d} (q_1\ldots q_d)^{1/d} \\
&\leq (1 + C_{d,k,\rho} R_n) (\sup_{S_{y,q}} \rho)^{1/d} \left( \prod_{i=1}^d \left\{ \frac{ C \alpha \vare^2 \uv{\gamma}^{-2}  + \vare}{w_{x_i}(x_n)} + 2\vare^3 \right\} \right)^{1/d} \\
&\leq (1+ C_{d,k,\rho} R_n) (\sup_{S_{y,q}} \rho)^{1/d} \left( \prod_{i=1}^d \left\{ \frac{ C \alpha \vare^2 \uv{\gamma}^{-2} + \vare + 2 \ov{\gamma}\vare^3 }{w_{x_i}(x_n )} \right\}\right)^{1/d}.
\end{align*}
As our assumptions on $\vare$ imply $\vare \leq C \ov{\gamma}^{-1}$, we have $\ov{\gamma}\vare^3 \leq C \alpha \vare^2 \uv{\gamma}^{-2}$. Hence we have
\begin{align*}
\left(w_{x_1}(x_n)\cdot \ldots w_{x_d}(x_n)\right)^{1/d} &\leq (1 + C_{d,k,\rho} R_n)(\sup_{S_{y,q}} \rho)^{1/d} \left(1 + C \uv{\gamma}^{-2}\alpha  \vare \right).
\end{align*}
As we assume $\rho \in \Czeroone$ and $\rho \geq \rhomin > 0$, we have $\rho^{1/d} \in C^{0,1}(\Omega_R)$, hence $\sup_{S_{y,q}} \rho^{1/d} \leq \rho(x_n)^{1/d} + C_{d,\rho}\vare \uv{\gamma}^{-1}$. By our assumption $n^{1/d} > C_{d,k,\rho}\vare^{-1} \ov{\gamma} \log(n)^2$, we have $R_n \leq C_{d,k,\rho}$. As $\vare \leq C\alpha^{-1} \uv{\gamma}^2 \ov{\gamma}^{-1}$, we have $\uv{\gamma}^{-2}\alpha \vare \leq C$. Hence we have
\begin{align*}
\left(w_{x_1}(x_n)\cdot \ldots w_{x_d}(x_n)\right)^{1/d} \leq \rho(x_n)^{1/d} + C_{d,k,\rho}(R_n + \uv{\gamma}^{-2}\alpha \vare ).
\end{align*}
Applying Proposition \ref{prop:perturb} with $\lambda \geq C_{d,k,\rho}(R_n + \uv{\gamma}^{-2}\alpha\vare)$, we obtain a contradiction. We conclude that we must have
\begin{equation*}
    \sup_{\Omega_R} \left(u_n - w  \right) \neq \sup_{\Omega_{R+\delta}} \left(u_n - w  \right).
\end{equation*}
We now prove (b). Let $\Omega_R^\vare$, $\Gamma_\vare$, and $\mathcal{S}$ be as in (a), and let $E_S$ be the event that
\begin{align}\label{eq:simplexbound}
    u_n(S) - d(\inf_S \rho^{1/d})\abs{S}^{1/d} \geq -\frac{C_{d,k,\rho} \abs{S}^{1/2d} \log^{2} n}{n^{1/2d} \log \log n}
\end{align}
and let $E$ be the event that $E_S$ holds for all $S \in \mathcal{S}$. For any $n >C_{d,k,\rho} \abs{S}^{-1} \log(n)^{2d}$, we have $\p(E_S) \geq 1-C_{d,k} n^{-k}$ by Theorem \ref{thm:simplexrate}. By choice of $\Gamma_\vare$, we have $\ov{\gamma}^{-1}\vare \leq d\abs{S}^{1/d} \leq 4\uv{\gamma}^{-1} \vare$ for $S \in \mathcal{S}$. As we assume that $n^{1/d} > C_{d,\rho,k}\vare^{-2}\alpha^{-1} \uv{\gamma}^2$ and $\vare \leq \uv{\gamma}^2 \ov{\gamma}^{-1}\log(n)^{-2}$, for all $S \in \mathcal{S}$ we have 
\begin{align*}
    n^{1/d} &> C_{d,\rho,k}\vare^{-2}\alpha^{-1} \uv{\gamma}^2 \geq \vare^{-1}\ov{\gamma}\log(n)^2 \geq C_{d,k,\rho}\abs{S}^{-1/d} \log(n)^{2}.
\end{align*}
By the union bound, we have $\p\left( E \right) \geq 1-C_{d,k} \vare^{-6d} n^{-k}$. For the remainder of the proof we assume that $E$ holds. Then for each $S \in \mathcal{S}$ we have
\begin{align*}
    u_n(S)  \geq d \abs{S}^{1/d} (\inf_S \rho^{1/d}) - \frac{C_{d,k,\rho} \abs{S}^{1/2d} \log^{2}( n)}{n^{1/2d} \log \log ( n)} \geq d\abs{S}^{1/d}(\inf_S \rho^{1/d}) (1 - C_{d,k,\rho} R_n)
\end{align*}
where $R_n = \ov{\gamma}^{1/2} \vare^{-1/2} n^{-1/2d}\frac{\log^{2}( n)}{\log \log ( n)}$. Let $w = (1-\lambda) \varphi$, and we show that
\begin{align*}
    \sup_{\Omega_R} \left(w - u_n  \right) \neq \sup_{\Omega_{R+\delta}} \left(w - u_n  \right).
\end{align*}
Assume for contradiction that
\begin{align*}
    \sup_{\Omega_R} \left(w - u_n  \right) = \sup_{\Omega_{R+\delta}} \left(w- u_n  \right).
\end{align*}
Since $\varphi$ is semiconvex with semiconvexity constant $\alpha$, at almost every $x\in \Omega_R$ we have that $\varphi$ is twice differentiable at $x$ with $D^2 \varphi(x) \geq -\alpha I$. Hence, there is $x_n \in \Omega_{R+\delta}$ such that $D^2 \varphi(x_n) \geq -\alpha I$ and $w(x_n) - u_n(x_n) + \frac{d}{c_d}n^{-1/d} \geq \sup_{\Omega_R} (w - u_n)$. Therefore, we have
\begin{align}\label{eq:touchingproperty}
u_n(x_n) - u_n(x) \leq w(x_n) - w(x) + \frac{d}{c_d}n^{-1/d} \text{ for all } y\in \Omega_R.
\end{align}
We define the sets
\begin{align*}
B_n &= \set{x\in [0,x_n]\cap \Omega_R: u_n(x_n) - u_n(x)\leq \vare - \frac{d}{c_d}n^{-1/d}} \\
B &= \set{x\in [0,x_n]\cap \Omega_R: w(x_n) - w(x)\leq \vare - \frac{2d}{c_d}n^{-1/d}}.
\end{align*}
By (\ref{eq:touchingproperty}) we have $B \subset B_n$. By Lemma \ref{lem:chaininAn} we have $u_n(B_n) \leq \vare$. Letting $p_i = \frac{\vare - K \alpha \vare^2 \uv{\gamma}^{-2}}{v_{x_i}(x_n)}$, we have $S_{x_n, p} \subset B(x_n, \uv{\gamma}^{-1}\vare)$. For $x\in S_{x_n,p}$ we have $1 + (x-x_n)\cdot p^{-1} \geq 0$, hence
\begin{align}\label{eq:sxnpinequalityb}
    Dw(x_n) \cdot (x_n - x) \leq \vare - K \alpha \vare^2 \uv{\gamma}^{-2}.
\end{align}
Using \eqref{eq:sxnpinequalityb}, $\abs{x-x_n}\leq \uv{\gamma}^{-1}\vare$, and $n^{-1/d} \leq \alpha \uv{\gamma}^{-2}\vare^2$, we have
\begin{align*}
w(x_n) - w(x) &\leq Dw(x_n)\cdot (x_n - x) + \alpha \abs{x-x_n}^2 \\
&\leq \vare - K \alpha \uv{\gamma}^{-2} \vare^2 + \alpha \uv{\gamma}^{-2} \vare^2 \\
&\leq \vare - (K-1)n^{-1/d}.
\end{align*}
Choosing $K$ so $(K-1) = \frac{2d}{c_d}$, we have $x\in B$. Hence, $S_{x_n, p} \subset B$. We now show there exist $y\in \Omega_R^\vare$ and $q\in \Gamma_\vare$ such that $S_{y,q} \subseteq S_{x_n,p}$. Let $y\in \Omega^{\vare}_R$ such that $x_{n} \geq y$ and $\abs{y - x_n}\leq \vare^3$. Letting $\mathds{1}$ denote the ones vector, we have $S_{y, p-\vare^3 \mathds{1}} \subseteq S_{x_n,p}$. We may choose $q$ so $p - \vare^3 \mathds{1} \geq q \geq p - 2\vare^3 \mathds{1}$ provided that $\ov{\gamma}^{-1}\vare +2\vare^3 \leq p_i \leq 4\uv{\gamma}^{-1}\vare$ for each $i$, which holds when $\vare \leq \frac{1}{8}\alpha^{-1} \uv{\gamma}^{2} \ov{\gamma}^{-1}$. Then $S_{y,q} \subset S_{x_n, p}$. Using that $(q_1\ldots q_d)^{1/d} = d \abs{S_{y,q}}^{1/d}$, we have
\begin{align*}
\vare &\geq u_n(B_n) \\
&\geq u_n(S_{y,q}) \\
&\geq (1 - C_{d,k,\rho} R_n)(\inf_{S_{y,q}} \rho)^{1/d} (q_1\ldots q_d)^{1/d} \\
&\geq (1 - C_{d,k,\rho} R_n) (\inf_{S_{y,q}} \rho)^{1/d} \left( \prod_{i=1}^d \left\{ \frac{\vare - C_{d} \alpha \vare^2 \uv{\gamma}^{-2} }{w_{x_i}(x_n)} - 2\vare^3 \right\} \right)^{1/d} \\
&\geq (1 - C_{d} R_n) (\inf_{S_{y,q}} \rho)^{1/d} \left( \prod_{i=1}^d \left\{ \frac{\vare - C_{d} \alpha \vare^2 \uv{\gamma}^{-2} - 2 \ov{\gamma}\vare^3 }{w_{x_i}(x_n )} \right\}\right)^{1/d}.
\end{align*}
As our assumptions on $\vare$ imply $\vare \leq \ov{\gamma}^{-1}$, we have $\ov{\gamma}\vare^3 \leq \alpha \vare^2 \uv{\gamma}^{-2}$. Hence we have
\begin{align*}
\left(w_{x_1}(x_n)\cdot \ldots w_{x_d}(x_n)\right)^{1/d} &\geq (1 - C_{d,k,\rho} R_n)(\inf_{S_{y,q}} \rho)^{1/d} \left(1 - C_d \uv{\gamma}^{-2}\alpha  \vare \right).
\end{align*}
As we assume $\rho \in \Czeroone$ and $\rho \geq \rhomin > 0$, we have $(\inf_{S_{y,q}} \rho^{1/d}) \geq \rho(x_n)^{1/d} - C_{d,\rho}\vare \uv{\gamma}^{-1}$. Hence we have
\begin{align*}
    \left(w_{x_1}(x_n)\cdot \ldots w_{x_d}(x_n)\right)^{1/d} &\geq \rho(x_n)^{1/d}(1 - C_{d,k,\rho} R_n)(1 - C_{d,\rho}\vare \uv{\gamma}^{-1}) \left(1 - C_d \uv{\gamma}^{-2}\alpha  \vare \right) \\
    &\geq \rho(x_n)^{1/d} - C_{d,k,\rho}(R_n + \uv{\gamma}^{-2}\alpha \vare ).
\end{align*}
Applying Proposition \ref{prop:perturb} with $\lambda = C_{d,k,\rho}(R_n + \uv{\gamma}^{-2}\alpha\vare)$ we obtain a contradiction. We conclude that we must have
\begin{equation*}
    \sup_{\Omega_R} \left(w - u_n  \right) \neq \sup_{\Omega_{R+\delta}} \left(w - u_n  \right). \qedhere
\end{equation*}

\end{proof}
\begin{lemma}\label{lem:chaininAn}
Given $x_0 \in \Omega_R$ and $\vare > 0$, let 
$$A_n = \set{x\in [0,x_0]\cap \Omega_R : u_n(x_0) - u_n(x) \leq \vare}.$$ 
Then $u_n(A_n) \leq \vare + \frac{d}{c_d}n^{-1/d}$. If $u_n(x_n) \geq \vare$, then additionally we have $u_n(A_n) \geq \vare$.
\end{lemma}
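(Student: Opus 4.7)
The key identity underlying both directions is $u_n(x) = \frac{d}{c_d} n^{-1/d} U_n(x)$, where $U_n(x)$ counts the longest chain in $[0,x]\cap X_{n\rho}\cap \Omega_R$, so both inequalities should follow by concatenating chains through $A_n$ with chains in an initial segment $[0,y]$ and exploiting coordinatewise monotonicity of $u_n$. The additive $\frac{d}{c_d}n^{-1/d}$ correction in the upper bound is precisely the cost of a single chain element, incurred when one of the spliced chains ends where the other begins.

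For the upper bound, the plan is to fix a longest chain $y_1 < y_2 < \cdots < y_k$ in $A_n\cap X_{n\rho}$ in the partial order, so that $k = \frac{c_d}{d}n^{1/d}u_n(A_n)$, and then prepend a longest chain of length $U_n(y_1)$ in $[0,y_1]\cap X_{n\rho}\cap \Omega_R$. Since every element of the prepended chain is $\leq y_1$ and the $y_i$ are strictly increasing, the concatenation is a chain in $[0,y_k]\cap X_{n\rho}\cap \Omega_R$ of length at least $U_n(y_1)+k-1$, where the $-1$ accounts for the edge case in which the prepended chain already ends at $y_1$. Hence $U_n(y_k)\geq U_n(y_1)+k-1$; multiplying through by $\frac{d}{c_d}n^{-1/d}$, using $u_n(y_k)\leq u_n(x_0)$ via monotonicity together with $y_k\leq x_0$, and using $u_n(y_1)\geq u_n(x_0)-\vare$ from $y_1\in A_n$, yields $u_n(A_n)\leq \vare+\frac{d}{c_d}n^{-1/d}$.

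For the conditional lower bound, assuming $u_n(x_0)\geq \vare$, I would take a longest chain $z_1 < z_2 < \cdots < z_m$ realising $U_n(x_0)=m$ and identify the nonempty suffix of its elements lying in $A_n$. Because $i\mapsto u_n(z_i)$ is nondecreasing, this suffix has the form $z_{j+1},\ldots,z_m$ for some $0\leq j < m$; if $j=0$ the whole chain lies in $A_n$ and $u_n(A_n)\geq u_n(x_0)\geq \vare$ immediately. Otherwise $u_n(z_j) < u_n(x_0)-\vare$, and the key observation is that maximality of $m$ forces $U_n(z_j)=j$ exactly, since any strictly longer chain in $[0,z_j]\cap X_{n\rho}\cap \Omega_R$ could be spliced onto $z_{j+1}<\cdots<z_m$ to produce a chain in $[0,x_0]\cap X_{n\rho}\cap \Omega_R$ of length exceeding $m$. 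Hence $u_n(z_j)=\frac{d}{c_d}n^{-1/d}j<u_n(x_0)-\vare$, which gives $m-j>\frac{c_d}{d}n^{1/d}\vare$, and since $z_{j+1},\ldots,z_m$ is a chain of that length in $A_n\cap X_{n\rho}$ we conclude $u_n(A_n)\geq \vare$.

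The one delicate point is the splicing argument used to force $U_n(z_j)=j$; it goes through precisely because the original longest chain has strictly increasing entries in the coordinatewise order, so the concatenation of a hypothetical longer chain in $[0,z_j]$ with the tail $z_{j+1}<\cdots<z_m$ remains a valid strict chain in $[0,x_0]\cap X_{n\rho}\cap \Omega_R$, directly contradicting $U_n(x_0)=m$. Everything else is a careful translation between chain counts and $u_n$ values through the scaling factor $\frac{d}{c_d}n^{-1/d}$.
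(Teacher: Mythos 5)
Your proof is correct and takes essentially the same route as the paper: for the upper bound, you concatenate a longest chain in $A_n$ with a longest chain in $[0,y_1]\cap\Omega_R\cap X_{n\rho}$ below its minimal element, and for the lower bound you extract the suffix lying in $A_n$ of a longest chain realizing $U_n(x_0)$ and pin down $U_n(z_j)=j$ by the splicing contradiction. The only cosmetic difference is that the paper isolates that splicing step as a standalone identity (their equation for $\ell([0,y_j]\cap S\cap X_{n\rho})=j$) before applying it, and then reconciles inequalities via the $j>1$ and $j=1$ cases, whereas you invoke the identity inline and close the argument by directly computing $m-j>\frac{c_d}{d}n^{1/d}\vare$.
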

\begin{proof}
First we show that $u_n(A_n) \leq \vare + \frac{d}{c_d}n^{-1/d}$. Let $\C_1$ be a longest chain in $A_n \cap X_{n\rho}$, and let $x$ be the coordinatewise minimal element of $\C_1$. Let $\C_2$ be a longest chain in $[0,x] \cap \Omega_R \cap X_{n\rho}$. Concatenating these chains, we have
\begin{align*}
    u_n(x_0) &\geq u_n(\C_1) + u_n(\C_2) - u_n(\C_1 \cap \C_2) \\
    & = u_n(A_n) + u_n(x) - \frac{d}{c_d}n^{-1/d}.
\end{align*}
Hence,
\begin{align*}
    u_n(A_n) \leq u_n(x_0) - u_n(x) + \frac{d}{c_d}n^{-1/d} \leq \vare + \frac{d}{c_d}n^{-1/d}.
\end{align*}
To prove that $u_n(A_n) \geq \vare$, we must first establish a useful property of longest chains. Given $S \subset \R^d$ and a longest chain $ \set{y_i}_{i=1}^k$ in $S$, we claim that 
\begin{align}\label{eq:longestchainproperty}
    \ell([0,y_j] \cap S \cap X_{n\rho}) = j.
\end{align}
It is clear that $\ell([0,y_j] \cap S \cap X_{n\rho}) \geq j$, as $\set{y_i}_{i=1}^j$ is a chain of length $j$ in $[0,y_j]\cap S \cap X_{n\rho}$. If $\ell([0,y_j] \cap S \cap X_{n\rho}) \geq j + 1$, then there exists a chain $\set{z_i}_{i=1}^{j+1}$ in $[0,y_j] \cap S \cap X_{n\rho}$. Concatenating this chain with $\set{y_i}_{i=j+1}^k$ yields a chain of length $k+1$, contradicting maximality of $\set{y_i}_{i=1}^k$. Now we prove the main result. Let $\set{x_i}_{i=1}^k$ be a longest chain in $[0,x_0] \cap \Omega_R \cap X_{n\rho}$, and let $j = \min \set{i \in \set{1,\ldots ,k} : x_i \in A_n}$. Letting $\C_1 = \set{x_i}_{i=1}^j$ and $\C_2 = \set{x_i}_{i=j}^k$, we have
\begin{align}\label{eq:An}
    u_n(x_0) &= u_n(\C_1) + u_n(\C_2) - u_n(\C_1 \cap \C_2)\\
    &\leq u_n(x_j) + u_n(A_n) - \frac{d}{c_d}n^{-1/d}. 
\end{align}
By \eqref{eq:longestchainproperty} we have $u_n(x_i) = \frac{d}{c_d} n^{-1/d}i$ for $1 \leq i \leq k$.
If $j > 1$, then using $u_n(x_0) - u_n(x_{j-1}) > \vare$ and $u_n(x_0) \geq \vare$ we have
\begin{align}\label{eq:An2}
    u_n(x_0) - u_n(x_j)  \geq \vare - \frac{d}{c_d} n^{-1/d}.
\end{align}
If $j = 1$, then \eqref{eq:An2} is an immediate consequence of $u_n(x_0) \geq \vare$. Combining \eqref{eq:An2} with \eqref{eq:An}, we see that
\begin{align*}
    u_n(A_n) \geq u_n(x_0) - u_n(x) + \frac{d}{c_d} n^{-1/d} \geq \vare.
\end{align*}
\end{proof}

\begin{lemma}\label{lem:mindisttoboundary}
Let $0 < \delta \leq R$. Then given $y \in \Omega_{R+\delta}$, there is a constant $C_d > 0$ such that
\begin{align*}
    \dist(y, \partial_R \Omega) \geq C_d \delta R^{d-1}.
\end{align*}
\end{lemma}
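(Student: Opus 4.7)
The plan is to prove this via a direct Lipschitz estimate for the function $f(x) := (x_1\cdots x_d)^{1/d}$, whose level sets in $[0,1]^d$ define $\Omega_R$ and $\partial_R\Omega$. Specifically, I will show that $f$ is $O(R^{-(d-1)})$-Lipschitz on the closed set $K := \{x\in[0,1]^d : f(x)\geq R\}$, after which the conclusion follows immediately: for any $y\in\Omega_{R+\delta}$ and any $z\in\partial_R\Omega$ (both in $K$),
\begin{equation*}
\delta \leq f(y)-f(z) \leq [f]_{\Lip(K)}\,|y-z|,
\end{equation*}
and taking the infimum over $z\in\partial_R\Omega$ gives the stated bound with $C_d = \sqrt{d}$ (or a similar dimensional constant).

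First I would establish the pointwise gradient bound. A direct calculation gives $\partial_i f(x) = f(x)/(d\,x_i)$ wherever $x_i>0$. The key observation is that since $x\in[0,1]^d$, we have $\prod_{j\neq i} x_j \leq 1$, and combining this with the identity $\prod_j x_j = f(x)^d$ yields the crucial lower bound $x_i \geq f(x)^d$. Plugging in, $|\partial_i f(x)| \leq 1/(d\,f(x)^{d-1})$, so on $K$ we obtain $|\partial_i f(x)|\leq 1/(d R^{d-1})$ and therefore $|Df(x)|\leq 1/(\sqrt{d}\,R^{d-1})$.

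The second step is to promote this pointwise bound to a genuine Lipschitz estimate on $K$, which requires that straight segments between two points of $K$ remain in $K$; equivalently, that $K$ is convex. This is where log-concavity enters: since $\log f(x) = \tfrac{1}{d}\sum_i \log x_i$ is concave on $\R^d_+$, the superlevel set $\{f\geq R\}$ is convex, and intersecting with the convex set $[0,1]^d$ preserves convexity. More concretely, along the segment $\gamma(t)=(1-t)y+tz$ between $y\in\Omega_{R+\delta}$ and $z\in\partial_R\Omega$, the concavity of $\log f$ yields $f(\gamma(t))\geq f(y)^{1-t}f(z)^{t}\geq R$, so $\gamma(t)\in K$. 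Applying the fundamental theorem of calculus, $f(y)-f(z) \leq |y-z|\sup_{t}|Df(\gamma(t))|\leq |y-z|/(\sqrt{d}R^{d-1})$, and rearranging gives $|y-z|\geq \sqrt{d}\,\delta R^{d-1}$.

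There is no serious obstacle here; the proof is essentially a routine Lipschitz-type estimate, with the only subtlety being the need to certify that the bound on $|Df|$ — which degenerates badly as $x_i\to 0$ — can be controlled uniformly on the region of interest. The observation that drives the whole argument is the elementary identity $x_i\geq f(x)^d$ on $[0,1]^d$, which converts a statement about smallness of one coordinate into a statement about smallness of $f$. The assumption $\delta\leq R$ is not used in my argument; it presumably serves only to keep $R+\delta$ bounded so that $\Omega_{R+\delta}$ is nontrivial in the regime of interest.
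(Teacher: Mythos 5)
Your proof is correct and uses essentially the same mechanism as the paper: the gradient bound $|\partial_i f(x)| = f(x)/(dx_i)$ combined with the observation that $x_i\geq R^d$ on $\Omega_R$, followed by the fundamental theorem of calculus along a straight segment. Your version is a modest refinement: by bounding $x_i\geq f(x)^d$ pointwise rather than invoking $x_i\geq R^d$ and $f\leq R+\delta$ separately, you avoid the hypothesis $\delta\leq R$, and by noting that $K=\{f\geq R\}\cap[0,1]^d$ is convex (via log-concavity of $f$), you explicitly justify why the segment remains in the region where the gradient bound holds — a point the paper glosses over by asserting (incorrectly, as stated) that the whole segment from the minimizer $x\in\partial_R\Omega$ to $y\in\Omega_{R+\delta}$ lies in $\Omega_R\setminus\Omega_{R+\delta}$.
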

\begin{proof}
Let $x \in \partial_R\Omega$ such that $\abs{x-y} =\dist(y, \partial_R \Omega)$. Then
\begin{align*}
    \set{(1-t)x + ty: t \in (0,1)} \subset \Omega_R \setminus \Omega_{R+\delta}.
\end{align*}
Letting $f(x) = (x_1\ldots x_d)^{1/d}$, we have
\begin{align*}
    f_{x_i}(x) = \frac{(x_1\ldots x_d)^{1/d}}{dx_i}.
\end{align*}
Using that $x_i \geq R^d$ for $x\in \Omega_R$ and that $f(x) \leq R + \delta$ for $x\in \Omega_R \setminus \Omega_{R+\delta}$, we have
\begin{align*}
    \norm{Df}_{L^\infty(\Omega_R \setminus \Omega_{R+\delta})} &\leq \frac{1}{d}R^{-d}(R+\delta) \leq \frac{2}{d} R^{-d+1}.
\end{align*}
Hence, we have
\begin{align*}
    \delta \leq f(y) - f(x) \leq \frac{2}{d} R^{-d+1} \abs{y - x}.
\end{align*}
\end{proof}
Next, we establish estimates on $u_n$, $v_n$, $u$, and $v$ that hold with high probability in a thin tube around $\partial_R \Omega$ and $\partial_0 \Omega$. To do so, we cover the neighborhood with rectangular boxes and apply Theorem \ref{thm:rectanglerate}. In the following Lemma we establish the existence of a suitable collection of rectangles.
\begin{lemma}\label{lem:covering}
The following statements hold.
\begin{enumerate}
    \item[(a)]
    Given $\vare > 0$, there exists a collection $\mathcal{R}$ of rectangles covering $\Omega_0 \setminus \Omega_\vare$ such that $\abs{E}^{1/d} = C\vare$ for each $E \in \mathcal{R}$ and $\abs{\mathcal{R}} \leq C_d \vare^{-d^2}$.
    \item[(b)]
    Given $R \in (0,\frac{1}{2}]$ and $0 < \vare \leq \frac{1}{2}R$, there exists a collection $\mathcal{R}$ of rectangles such that for each $x, y \in \Omega_R \setminus \Omega_{R+\vare}$ with $x < y$ and $[x,y] \subset \Omega_R \setminus \Omega_{R+\vare}$, there exists $E \in \mathcal{R}$ such that $[x,y] \subseteq E$. Furthermore, we have $C_d R^{d-1}\vare \leq \abs{E}^{1/d} \leq C_d R^{-1}\vare$ for each $E \in \mathcal{R}$ and $\abs{\mathcal{R}} \leq C_d R^{-2d(d-1)} \vare^{-2d}$.
\end{enumerate}
\end{lemma}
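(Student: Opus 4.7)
The plan is to construct $\mathcal{R}$ explicitly in each case. Part (a) is essentially immediate because the claimed cardinality bound $\varepsilon^{-d^2}$ is vastly larger than what is actually needed: I would let $\mathcal{R}$ consist of all axis-aligned closed cubes of side length $C\varepsilon$ in a uniform grid over $[0,1]^d$ that meet $\Omega_0\setminus \Omega_\varepsilon$. Then each cube $E$ satisfies $|E|^{1/d} = C\varepsilon$, covering is trivial, and the total count is at most $C_d \varepsilon^{-d} \leq C_d \varepsilon^{-d^2}$. The rest of the plan concerns (b), whose difficulty is twofold: every box $[x,y]$ lying in the thin slab $\Omega_R\setminus\Omega_{R+\varepsilon}$ must sit inside a single $E\in\mathcal{R}$, and the rectangles must have anisotropic shape respecting the local geometry of the slab.

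The first step for (b) is a local thickness estimate: if $[x,y]\subset \Omega_R\setminus\Omega_{R+\varepsilon}$, then for each $j$,
\[
 y_j - x_j \leq C_d\,\frac{\varepsilon}{R}\,x_j.
\]
This follows by testing $(z_1\cdots z_d)^{1/d} \leq R+\varepsilon$ at the corner $z=(x_1,\ldots,x_{j-1},y_j,x_{j+1},\ldots,x_d)\in[x,y]$, using $\prod_{i\neq j}x_i \geq R^d/x_j$ from $x\in\Omega_R$, and noting that $(1+\varepsilon/R)^d - 1 \leq C_d \varepsilon/R$ under the assumption $\varepsilon \leq R/2$. In particular, the slab is locally thin in direction $j$ in proportion to the local coordinate $x_j$, and any such $[x,y]$ has $y_j/x_j \leq (1+\varepsilon/R)^d \leq (3/2)^d$, so $x$ and $y$ differ by only a bounded multiplicative factor.

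To construct $\mathcal{R}$, I would use a dyadic scheme. For each tuple $(k_1,\ldots,k_d)$ of non-negative integers with $\sum(k_j+1)\leq d\log_2(1/R)+O_d(1)$ (the compatibility condition with $\prod x_j \gtrsim R^d$), form the enlarged dyadic shell $B_k^+ = \prod_j [2^{-k_j-C_d},2^{-k_j+C_d}]$, chosen wide enough that any $[x,y]$ with $x\in \prod_j[2^{-k_j-1},2^{-k_j}]$ lies inside $B_k^+$ by the thickness estimate. Partition $B_k^+$ with a sliding (overlapping) grid of axis-aligned rectangles with side length $C_d\,2^{-k_j}\varepsilon/R$ in direction $j$ and spacing equal to half that length, so that every $[x,y]$ of thickness $\leq C_d\, 2^{-k_j}\varepsilon/R$ meeting $B_k^+$ is contained in at least one grid member.

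The geometric mean side length of each subrectangle is $|E|^{1/d} = C_d(\varepsilon/R)\cdot 2^{-\sum k_j/d}$, and the constraint $0\leq \sum k_j \leq d\log_2(1/R)+O_d(1)$ yields the claimed two-sided bound $C_d R^{d-1}\varepsilon \leq |E|^{1/d}\leq C_d R^{-1}\varepsilon$. The number of admissible tuples is at most $C_d\log(1/R)^d$ and each yields at most $C_d(R/\varepsilon)^d$ subrectangles, for a total comfortably below $C_d R^{-2d(d-1)}\varepsilon^{-2d}$. The main obstacle is the combinatorial bookkeeping in the third step: ensuring $[x,y]$ is contained in a \emph{single} grid cell (rather than being split across adjacent ones) when $x$ and $y$ straddle a dyadic scale boundary. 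This is precisely what the enlargement $B_k \rightsquigarrow B_k^+$ and the half-width shift are designed to handle, so that once the local thickness estimate is in hand the rest reduces to careful counting.
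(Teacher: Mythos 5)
Your proposal is correct but takes a genuinely different route from the paper. The paper's proof of (b) is blunter and shorter: it sets $B = \{x \in [0,2]^d : R - \vare \leq (x_1\cdots x_d)^{1/d} \leq R + 2\vare\}$, lays a single uniform grid $B_h = B\cap h\Z^d$ with spacing $h = C_d R^{d-1}\vare$ (chosen so that one grid step in any direction changes $(x_1\cdots x_d)^{1/d}$ by at most $C_d R^{-(d-1)}h = C_d\vare$), and takes $\mathcal{R}$ to be the set of all boxes $[z_1,z_2]$ with $z_1,z_2\in B_h$ and $[z_1,z_2]\subset B$. Given $[x,y]$ in the slab, the estimate $R \leq (x_1\cdots x_d)^{1/d}/x_i \leq R^{-(d-1)}$ on $B$ gives $|x - y| \leq C_d R^{-1}\vare$ and locates grid corners $y^-\leq x$, $y^+\geq y$ with $[y^-,y^+]\in\mathcal{R}$; the cardinality bound $|\mathcal{R}| \leq C_d h^{-2d}$ is just the crude count of pairs of grid points. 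Your construction replaces this with a multiscale decomposition into anisotropic rectangles: you first derive the sharper local thickness bound $y_j - x_j \leq C_d(\vare/R)x_j$, then tile each dyadic shell $\prod_j[2^{-k_j-1},2^{-k_j}]$ with an overlapping half-shift grid whose cell in direction $j$ has side $\sim 2^{-k_j}\vare/R$. This costs you the bookkeeping of dyadic scales, the enlargement $B_k\rightsquigarrow B_k^+$ to handle boxes straddling a scale boundary, and the half-shift trick, but it buys a substantially sharper count $\sim \log(1/R)^d(R/\vare)^d$ in place of the paper's $R^{-2d(d-1)}\vare^{-2d}$ (the paper's bound is what the lemma states, so your improvement is strictly a bonus). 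A small point: after imposing the compatibility constraint from $\prod_j x_j \in [R^d,(3R/2)^d]$, $\sum_j k_j$ is pinned to $d\log_2(1/R) + O_d(1)$, so your cells actually satisfy the tighter $|E|^{1/d}\asymp\vare$ rather than merely lying in $[C_dR^{d-1}\vare, C_dR^{-1}\vare]$; your stated derivation of the two-sided bound (using the weaker constraint $\sum k_j\geq 0$ for the upper bound) is therefore correct but not sharp. For part (a) your observation is right and matches the spirit of the paper's remark that the case is ``similar but simpler''; a cube tiling with $O(\vare^{-d})$ cells suffices and is well under the stated $\vare^{-d^2}$.
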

\begin{proof}
We give the proof of (b) only, as the proof of (a) is similar but simpler. Given $h \in (0,1)$, let 
\begin{align*}
    B &= \set{x\in [0,2]^{d} : R - \vare \leq (x_1\ldots x_d)^{1/d} \leq R + 2 \vare}
\end{align*}
and set $B_h = B \cap h\Z^{d}$. We define
\begin{align*}
    \mathcal{R} = \set{[x_1, x_2]: (x_1, x_2) \in B_h \times B_h \text{ and } x_1 < x_2 \text{ and } [x_1,x_2] \subseteq B}
\end{align*}
and we show that $\mathcal{R}$ has the desired properties when $h$ is appropriately chosen. Let $z, y\in \Omega_R \setminus \Omega_{R+\vare}$ with $z < y$ and $[z,y] \subseteq \Omega_R \setminus \Omega_{R+\vare}$. Let $w(x) = (x_1\ldots x_d)^{1/d}$. Then $w_{x_i} = \frac{(x_1\ldots x_d)^{1/d}}{d x_i}$ and if $x\in B$ and $\vare \leq \frac{1}{2}R$ we have 
\begin{align}\label{eq:wbound}
    C_d R \leq \frac{(x_1\ldots x_d)^{1/d}}{x_i} \leq C_d R^{-(d-1)}.
\end{align}
Hence we have $\abs{z - y} \leq C_d R^{-1} \vare$. Letting $\mathds{1}$ denote the all ones vector, by \eqref{eq:wbound} we have $w(y + h\mathds{1}) \leq (R+\vare) + C_d R^{-(d-1)} h$ and $w(z - h\mathds{1}) \geq R - \vare - C_d R^{-(d-1)} h$. Then there exists a constant $C_d$ such that $[z - 2h\mathds{1}, y + 2h\mathds{1}] \subset B$ when $h = C_d R^{d-1} \vare$. Letting $h = C_d R^{d-1} \vare$, there exist $y^+ \in B_h$ and $y^- \in B_h$ such that $y + 2h\mathds{1}\geq y_i^+ \geq y + h\mathds{1}$ and $z - 2h\mathds{1} \leq y_i^- \leq z - h\mathds{1}$. Letting $A = \abs{[y^-, y^+]}^{1/d}$, we have $A \geq C_d R^{d-1} \vare$ and
\begin{align*}
    A\leq \frac{1}{d}\sum_{i=1}^d (y^+_i - y^-_i) \leq C_d (\abs{z-y} + h) \leq C_d R^{-1}\vare.
\end{align*}
Furthermore, we have
\begin{align*}
    \abs{\mathcal{R}} \leq C_d h^{-2d} \leq C_d R^{-2d(d-1)} \vare^{-2d}.
\end{align*}

\end{proof}
\begin{lemma}\label{lem:unbound}
Let $\rho \in C(\R^d)$ satisfy \eqref{eq:rhobounds}, and let $u_n$ and $v_n$ be given by \eqref{eq:scaledDepth}. and \eqref{eq:scaledDepthFull}. Given $k\geq 1$, $\vare \in (0,1)$, and $n > 0$, let $R_n = n^{-1/2d}\vare^{-1/2}\frac{\log^{2}( n)}{\log \log ( n)}$. Then the following statements hold.
\begin{enumerate}
    \item[(a)]
    For all $n>  C_{d,k}\rhomax^{-1} R^{-d^2}\vare^{-d} \log(n)^{4d}$ we have
    \begin{align*}
    \p\left(\sup_{\Omega_0 \setminus \Omega_{\vare}} v_n > C_{d,k,\rho} \vare \right) \leq C_{d,k} \vare^{-d^2} n^{-k}.
    \end{align*}
    \item[(b)]
    Given $R \in (0,1]$, $\vare \in (0,R]$, and $n>  C_{d,k}\rhomax^{-1} R^{-2d} \vare^{-d} \log(n)^{4d}$ we have
   \begin{align*}
    \p\left(\sup_{\Omega_R \setminus \Omega_{R+\vare}} u_n > C_{d,k,\rho} \vare \right) \leq C_{d,k} \vare^{-3d^2} n^{-k}.
    \end{align*}
\end{enumerate}
\end{lemma}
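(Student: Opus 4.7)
The plan is to reduce the supremum over a continuous region to a maximum over the finite family of rectangles provided by Lemma~\ref{lem:covering}, apply the box rate in Theorem~\ref{thm:rectanglerate}(a) to each rectangle (with $t=\sqrt{Ck\log n}$ chosen as in the proof of Theorem~\ref{thm:simplexrate}), and finish via a union bound.

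For part~(a), I would take the covering $\mathcal{R}$ of $\Omega_0\setminus\Omega_\vare$ from Lemma~\ref{lem:covering}(a) and exploit the monotonicity $v_n(x)=u_n([0,x])\leq u_n([0,b_E])$ whenever $x$ lies in some $E\in\mathcal{R}$ with upper corner $b_E$. The constraint $\abs{E}^{1/d}=C_d\vare$, together with $x\in\Omega_0\setminus\Omega_\vare$, forces $(b_{E,1}\cdots b_{E,d})^{1/d}\leq C_d\vare$, so Theorem~\ref{thm:rectanglerate}(a) applied to $[0,b_E]$ bounds $u_n([0,b_E])$ by $d\rhomax^{1/d}(b_{E,1}\cdots b_{E,d})^{1/d}$ plus a lower-order term that the hypothesis on $n$ absorbs into $C_{d,k,\rho}\vare$. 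Union-bounding over the at most $C_d\vare^{-d^2}$ rectangles yields the claimed failure probability.

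For part~(b), the decisive observation is that $f(z)=(z_1\cdots z_d)^{1/d}$ is nondecreasing in each coordinate. Hence for any chain $z^{(1)}<z^{(2)}<\cdots<z^{(k)}$ in $[0,x]\cap\Omega_R$ with $x\in\Omega_R\setminus\Omega_{R+\vare}$, all values $f(z^{(i)})$ lie in $[R,R+\vare]$, so the entire chain, and by the same monotonicity the whole rectangle $[z^{(1)},z^{(k)}]$, is contained in $\Omega_R\setminus\Omega_{R+\vare}$. Lemma~\ref{lem:covering}(b) then yields a single $E\in\mathcal{R}$ containing this rectangle, giving $u_n(x)\leq\max_{E\in\mathcal{R}}u_n(E)$. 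Applying Theorem~\ref{thm:rectanglerate}(a) to each of the at most $C_dR^{-2d(d-1)}\vare^{-2d}$ rectangles in $\mathcal{R}$ (each with $\abs{E}^{1/d}\leq C_dR^{-1}\vare$) and union-bounding then completes the proof.

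The main technical obstacle is verifying the hypotheses of Theorem~\ref{thm:rectanglerate} uniformly across the covering, in particular for the smallest rectangles in~(b) (where $\abs{E}^{1/d}\geq C_dR^{d-1}\vare$), which dictates the polylogarithmic lower bounds on $n$ in the statement. The choice $t=\sqrt{Ck\log n}$ ensures the failure probability on each rectangle is $\leq C_{d,k}n^{-k}$, so that the cardinality of $\mathcal{R}$ contributes only a polynomial-in-$\vare^{-1}$ factor to the total failure probability, matching the $\vare^{-d^2}$ and $\vare^{-3d^2}$ prefactors in the statement.
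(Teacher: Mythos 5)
Your plan follows the paper's argument almost exactly: reduce the supremum over the shell to a maximum over the finite covering from Lemma \ref{lem:covering}, apply the box rate from Theorem \ref{thm:rectanglerate} to each rectangle with $t\sim\sqrt{k\log n}$, and union-bound. The observation you supply for part (b) --- that because $z\mapsto(z_1\cdots z_d)^{1/d}$ is nondecreasing in each coordinate, any chain realizing $u_n(x)$ for $x\in\Omega_R\setminus\Omega_{R+\vare}$ lies entirely in the shell and hence spans a rectangle $[z^{(1)},z^{(k)}]$ contained in the shell --- is precisely the justification the paper leaves implicit when it writes $\sup_{\Omega_R\setminus\Omega_{R+\vare}}u_n\leq\max_{E\in\mathcal{R}}u_n(E)$. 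So at the structural level this is the paper's proof.

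Two quantitative details need care. In (b), you invoke Lemma \ref{lem:covering}(b) directly and quote $\abs{E}^{1/d}\leq C_d R^{-1}\vare$; pushing this through Theorem \ref{thm:rectanglerate}(a) gives $u_n(E)\leq C_{d,\rho}R^{-1}\vare$ plus lower-order terms, which is a factor $R^{-1}$ worse than the stated $C_{d,k,\rho}\vare$. The paper removes this factor by applying Lemma \ref{lem:covering}(b) with its $\vare$-parameter replaced by $\vare R$, which tightens the volume bound to $\abs{E}^{1/d}\leq C_d\vare$ while keeping $\abs{\mathcal{R}}\leq C_d\vare^{-3d^2}$; your sketch needs that rescaling (or a sharper covering obtained, say, by noting via superadditivity of the geometric mean that any $[a,b]\subset\Omega_R\setminus\Omega_{R+\vare}$ already satisfies $\abs{[a,b]}^{1/d}\leq\vare$). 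In (a), the inference that $\abs{E}^{1/d}=C_d\vare$ together with $E\cap(\Omega_0\setminus\Omega_\vare)\neq\emptyset$ forces $(b_{E,1}\cdots b_{E,d})^{1/d}\leq C_d\vare$ is false for a general rectangle: in $d=2$, $E=[\vare^2,\vare+\vare^2]\times[1/2,1/2+\vare]$ has $\abs{E}^{1/2}=\vare$ and meets $\Omega_0\setminus\Omega_\vare$, yet $(b_{E,1}b_{E,2})^{1/2}\approx\sqrt{\vare/2}\gg\vare$. This step is only safe because the covering in Lemma \ref{lem:covering}(a) can (and should) be built from corner boxes $[0,y]$ with $(y_1\cdots y_d)^{1/d}=C\vare$, for which the claim is tautological; equivalently, one can just run the same chain-spanning argument you used in (b), since $[0,x]\subset\Omega_0\setminus\Omega_\vare$ whenever $x\in\Omega_0\setminus\Omega_\vare$.
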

\begin{proof}
We will prove (b) only, as the proof of (a) is similar. Applying Lemma \ref{lem:covering} (b) with $\alpha = \vare R$, there exists a collection $\mathcal{R}$ of rectangles such that $\sup_{\Omega_R \setminus \Omega_{R+\vare}} u_n \leq \max_{R\in \mathcal{R}} u_n(R)$, $C_d R^{d}\vare\leq \abs{E}^{1/d} \leq C_d \vare$ for $E \in \mathcal{R}$, and $\abs{\mathcal{R}} \leq C_d R^{-2d(d-1)}\alpha^{-2d} \leq C_d \vare^{-3d^2}$. Given $R \in \mathcal{R}$, let $E_R$ be the event that
\begin{align*}
    u_n(R) \leq C_d(\sup_{R} \rho)^{1/d} \vare +  C_d k^{1/2} n^{-1/2d} \vare^{1/2} \frac{\log^{2} n}{\log \log n}. 
\end{align*}
If $E_R$ holds, then our assumption $n>  C_{d,k}\rhomax^{-1} R^{-d^2}\vare^{-d} \log(n)^{4d}$ implies that
$u_n(R) \leq C_{d,k,\rho} \vare$. Furthermore, we have $n > C_{d} (\sup_E \rho)^{-1} \abs{E}^{-1}$ for each $E \in \mathcal{R}$ and $\sqrt{k\log(n)} \leq \frac{(n \abs{E})^{1/2d}}{ \log \log n\abs{E}}$. Applying Theorem \ref{thm:rectanglerate} with $t = \sqrt{2k \log(n)}$,  we have $\p(E_R)  \geq 1 - C_{d} k n^{-k}$ for $n > C_d$. Letting $E$ be the event that $E_R$ holds for all $R \in \mathcal{R}$, we have $\p(E) \geq 1 - C_{d,k} \vare^{-3d^2} n^{-k}$ by the union bound. As $\sup_{\Omega_R \setminus \Omega_{R+\vare}} u_n \leq \max_{R\in \mathcal{R}} u_n(R)$, the result follows.
\end{proof}

\begin{lemma}\label{lem:ubound}
Given $R > 0$ and $\rho \in C(\R^d_+)$ satisfying \eqref{eq:rhobounds}, let $u$ and $v$ denote the solutions of \eqref{eq:PDEaux} and \eqref{eq:PDEfull} respectively. Then the following statements hold.
\begin{enumerate}
    \item[(a)]
    For all $\alpha > 0$ we have
    \begin{equation*}
    \sup_{\Omega_R \setminus \Omega_{R+\alpha}} u \leq d \rhomax^{1/d}\alpha.
\end{equation*}
    \item[(b)]
    We have
    \begin{align*}
        \sup_{\Omega_0} (v - \mathbbm{1}_{\Omega_R} u) \leq d\rhomax^{1/d} R.
    \end{align*}
\end{enumerate}
\end{lemma}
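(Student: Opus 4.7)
The plan is to prove both parts by comparison (Theorem \ref{thm:maximumprinciple}) against the explicit smooth supersolution
\[ \varphi(x) = d\rhomax^{1/d}(x_1\cdots x_d)^{1/d}. \]
A direct computation gives $\varphi_{x_i} = \rhomax^{1/d}(x_1\cdots x_d)^{1/d}/x_i > 0$ on $\R^d_+$, and hence $\varphi_{x_1}\cdots \varphi_{x_d} = \rhomax \geq \rho$; moreover $\varphi$ is nondecreasing in each coordinate, and the covering hypothesis on the boundary is met by taking $\Gamma$ equal to the curved face $\partial_R \Omega$ (or $\partial_0 \Omega$) and letting $\partial^{\ast}\Omega$ absorb the flat faces where some $x_i = 1$.

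For part (a), I would apply the comparison principle on $\Omega_R$ with $\tilde\varphi(x) := \varphi(x) - d\rhomax^{1/d} R$ in place of $\varphi$: subtracting a constant leaves the PDE untouched, makes $\tilde\varphi$ vanish on $\partial_R\Omega$ (matching $u = 0$ there), and preserves monotonicity, so the theorem yields $u \leq \tilde\varphi$ on $\ov{\Omega_R}$. For $x \in \Omega_R \setminus \Omega_{R+\alpha}$ we have $(x_1\cdots x_d)^{1/d} \leq R + \alpha$, and hence $\tilde\varphi(x) \leq d\rhomax^{1/d}\alpha$, which is exactly (a).

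For part (b), I would split on whether $x \in \Omega_R$. On $\Omega_0 \setminus \Omega_R$ the indicator $\mathbbm{1}_{\Omega_R}$ vanishes, so it suffices to show $v \leq d\rhomax^{1/d} R$; applying comparison on $\Omega_0$ to $v$ and $\varphi$ gives $v \leq \varphi$, and $\varphi(x) \leq d\rhomax^{1/d} R$ for such $x$. On $\Omega_R$, I would consider $w := u + d\rhomax^{1/d} R$: adding a constant leaves the PDE unchanged, so $w$ satisfies $w_{x_1}\cdots w_{x_d} = \rho$ in $\Omega_R$, and on $\partial_R\Omega$ we have $w = d\rhomax^{1/d} R \geq v$ by the bound just established together with $(x_1\cdots x_d)^{1/d} = R$ there. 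A second application of Theorem \ref{thm:maximumprinciple} then gives $v \leq w$ on $\ov{\Omega_R}$, i.e., $v - u \leq d\rhomax^{1/d} R$. Combining the two cases proves (b). No serious obstacle is expected; the only bookkeeping is to verify in each application that $\Gamma \cup \Omega \cup \partial^{\ast}\Omega$ covers $\ov{\Omega}$ and that the supersolution side is smooth and nondecreasing, both of which are immediate for $\varphi$ and its shifts.
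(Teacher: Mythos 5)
Your argument is correct and follows essentially the same route as the paper's proof: the same explicit barrier $d\rhomax^{1/d}(x_1\cdots x_d)^{1/d}$ (shifted by a constant for part (a)), the same three applications of Theorem \ref{thm:maximumprinciple}, and the same case split in part (b) on whether $x \in \Omega_R$. The one step you leave implicit but should verify before invoking the comparison on $\Omega_R$ in part (b) is that $u + d\rhomax^{1/d}R$ is nondecreasing in each coordinate, which follows since $u$ is.
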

\begin{proof}
(a) Let $w(x) = d\rhomax^{1/d}(x_1\ldots x_d)^{1/d} - d\rhomax^{1/d}R$. Then $w = 0$ on $\partial_R \Omega$ and
\begin{align*}
    (w_{x_1}\ldots w_{x_d})^{1/d} = \rhomax^{1/d} \geq (u_{x_1}\ldots u_{x_d})^{1/d} \text{ on } \Omega_R.
\end{align*}
Observe that $\ov{\Omega_R} = \partial_R \Omega \cup \Omega_R \cup \partial^\ast \Omega_R$, so $\Omega = \Omega_R$ and $\Gamma = \partial_R \Omega$ satisfy the hypotheses of Theorem \ref{thm:maximumprinciple}. As $u$ satisfies \eqref{eq:PDEaux}, $u = 0$ on $\partial_R \Omega$. By Theorem \ref{thm:maximumprinciple}, we have $u \leq w$ on $\Omega_R$. Furthermore, when $x\in \Omega_R \setminus \Omega_{R+\alpha}$, we have $u(x) \leq w(x) \leq d \rhomax^{1/d}\alpha$.

(b) Let $w(x) = d\rhomax^{1/d}(x_1 \ldots x_d)^{1/d}$. Then $w = 0$ on $\partial_0 \Omega$ and $(w_{x_1}\ldots w_{x_d})^{1/d} = \rhomax^{1/d}$. By Theorem \ref{thm:maximumprinciple} we have $v \leq w$ in $\Omega_0$, hence $v \leq d \rhomax^{1/d} R$ in $\Omega_0 \setminus \Omega_R$. Since $u = 0$ on $\partial_R \Omega$, we have $v \leq u + d\rhomax^{1/d} R$ on $\partial_R \Omega$. Furthermore, we have  $(u_{x_1}\ldots u_{x_d})^{1/d} = (v_{x_1}\ldots v_{x_d})^{1/d}$ in $\Omega_R$. Hence, we may apply Theorem \ref{thm:maximumprinciple} to conclude that $v \leq u + d \rhomax^{1/d} R$ within $\Omega_R$, and the result follows.
\end{proof}

\section{Proofs of Convergence Rates}\label{sec:mainrates}

\subsection{Proof of Theorem \ref{thm:mainaux}}\label{subsec:auxrates}
In this section we supply the proof of Theorem \ref{thm:mainaux}. Roughly speaking, the proof approximates $u$ with a semiconcave function $\varphi$, uses Theorem \ref{thm:maxnearboundary} to show that the maximum of $u_n - \varphi$ and $\varphi - u_n$ is likely to be attained in a neighborhood of the boundary, and then applies the boundary estimates established in Lemma \ref{lem:unbound}. To produce a suitable approximation $\varphi$, we use inf and sup convolutions, whose properties are summarized in the following lemma. While the proofs of similar results can be found in standard references on viscosity solutions such as \cite{bardi1997, katzourakis2014introduction, crandall1992}, the estimates are not stated in the sharp form required here. The proofs of the following statements can instead be found in \cite{calder2018lecture}.
\begin{lemma}\label{lem:infconvolution}
Given an open and bound set $\Omega \subset \R^d$, $u \in C^{0,1}(\ov{\Omega})$, and $\alpha > 0$, consider the inf-convolution defined by
\begin{align*}
    u_\alpha(x) &= \inf_{y\in \ov{\Omega}} \left\{u(y) + \frac{1}{2\alpha}\abs{x-y}^2  \right\}.
\end{align*}
Then the following properties hold:
\begin{enumerate}
    \item[(a)]
    $u_\alpha$ is semiconcave with semiconcavity constant $\alpha^{-1}$.
    \item[(b)]
    There exists a constant $C > 0$ such that
    \begin{equation*}
        \norm{u-u_\alpha}_{L^\infty(\ov{\Omega})} \leq C \alpha [u]_{C^{0,1}(\ov{\Omega})}^2.
    \end{equation*}

    \item[(c)]
    There exists a constant $C > 0$ such that
    \begin{equation*}
            [u_\alpha]_{C^{0,1}(\ov{\Omega})} \leq C[u]_{C^{0,1}(\ov{\Omega})}.
    \end{equation*}
    \item[(d)]
    Assume $f\in C^{0,1}(\ov{\Omega})$ and $H \in C^{0,1}_{loc}(\R^d)$. If
    \begin{align*}
        H(Du) \geq f \text{ in } \Omega
    \end{align*}
    then
    \begin{align*}
        H(Du_\alpha) \geq f - C\alpha [u]_{C^{0,1}(\ov{\Omega})} [f]_{C^{0,1}(\ov{\Omega})}  \text{ in } M_\alpha(u)
    \end{align*}
    where 
    \begin{equation*}
        M_\alpha(u) = \set{x\in \Omega: \argmin_{y\in \ov{\Omega}}\left\{u(y) + \frac{1}{2\alpha}\abs{x-y}^2  \right\} \cap \Omega \neq \emptyset}.
    \end{equation*}
    
    \item[(e)]
     Let $y_\alpha \in  \argmin_{y\in \ov{\Omega}}\left\{u(y) + \frac{1}{2\alpha}\abs{x-y}^2  \right\}$. Then there exists a constant $C > 0$ such that
    \begin{align*}
        \abs{x-y_\alpha} \leq C \alpha [u]_{C^{0,1}(\ov{\Omega})}. 
    \end{align*}
\end{enumerate}
\end{lemma}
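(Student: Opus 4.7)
My plan is to derive all five properties from the single identity
\[ u_\alpha(x) - \frac{|x|^2}{2\alpha} = \inf_{y \in \ov\Omega}\Bigl\{ u(y) + \frac{|y|^2}{2\alpha} - \frac{x \cdot y}{\alpha} \Bigr\}, \]
together with a uniform a priori bound on the displacement $|x - y_\alpha(x)|$, where $y_\alpha(x)$ denotes any element of $\ov\Omega$ achieving the infimum in the definition of $u_\alpha(x)$.

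I would prove (a) and (e) first, then deduce (b) and (c) from them. Property (a) is immediate from the displayed identity, since its right-hand side exhibits $u_\alpha - |x|^2/(2\alpha)$ as a pointwise infimum of functions affine in $x$, hence concave; this is the definition of $u_\alpha$ being $\alpha^{-1}$-semiconcave. For (e), the workhorse estimate, I would combine $u(y_\alpha) + |x-y_\alpha|^2/(2\alpha) \leq u(x)$ (substituting $y = x$ in the infimum) with the one-sided Lipschitz bound $u(y_\alpha) \geq u(x) - [u]_{C^{0,1}}|x-y_\alpha|$ to conclude $|x - y_\alpha| \leq 2\alpha[u]_{C^{0,1}}$. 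Given (e), (b) follows by plugging the displacement bound back in: $u_\alpha(x) \geq u(x) - [u]_{C^{0,1}}|x-y_\alpha| + |x-y_\alpha|^2/(2\alpha) \geq u(x) - C\alpha[u]_{C^{0,1}}^2$, with the reverse $u_\alpha \leq u$ immediate from taking $y=x$. For (c), I would use $y_\alpha(x)$ as a (suboptimal) test point in the definition of $u_\alpha(x')$ for nearby $x'$, expand the quadratic difference $|x'-y_\alpha|^2 - |x-y_\alpha|^2 = (x'-x)\cdot(x'+x-2y_\alpha)$, and use (e) to bound $|x'+x-2y_\alpha|$ by $O(\alpha[u]_{C^{0,1}} + |x-x'|)$, yielding a Lipschitz constant for $u_\alpha$ of order $[u]_{C^{0,1}}$.

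The delicate step is (d), the approximate viscosity supersolution property. My plan is a shift argument: if $\varphi \in C^\infty(\R^d)$ is such that $u_\alpha - \varphi$ attains a local minimum at $x_0 \in M_\alpha(u)$, the definition of $M_\alpha(u)$ guarantees an interior minimizer $y_0 \in \Omega$, so $y_0 + h \in \ov\Omega$ for all sufficiently small $h$. Substituting the (generally suboptimal) choice $y = y_0 + h$ into the infimum for $u_\alpha(x_0 + h)$ causes the quadratic terms to cancel and yields the shift inequality $u_\alpha(x_0+h) - u_\alpha(x_0) \leq u(y_0+h) - u(y_0)$; combining with the local minimum inequality $u_\alpha(x_0+h) - u_\alpha(x_0) \geq \varphi(x_0+h) - \varphi(x_0)$ shows that the shifted test function $\tilde\varphi(y) := \varphi(x_0 + (y - y_0)) - \varphi(x_0) + u(y_0)$ touches $u$ from below at $y_0$. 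Applying the supersolution property of $u$ at $y_0$ with test function $\tilde\varphi$, and noting $D\tilde\varphi(y_0) = D\varphi(x_0)$, gives $H(D\varphi(x_0)) \geq f(y_0) \geq f(x_0) - [f]_{C^{0,1}}|x_0 - y_0| \geq f(x_0) - 2\alpha[u]_{C^{0,1}}[f]_{C^{0,1}}$ by (e), which is (d). The main obstacle is ensuring the shifted minimizer $y_0 + h$ stays in $\ov\Omega$, which is precisely why the conclusion of (d) is restricted to $M_\alpha(u)$ rather than all of $\Omega$, and why the $H \in C^{0,1}_{\rm loc}$ hypothesis is included (to make $H$ stable under the small perturbation of gradients implicit in passing from $y_0$ to $x_0$).
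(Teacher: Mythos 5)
Your arguments for (a), (b), (d), and (e) are correct and follow the standard inf-convolution machinery, which is what the cited lecture notes use as well. The shift argument for (d) is exactly right: substituting $y = y_0+h$ in the infimum for $u_\alpha(x_0+h)$ makes the quadratic penalties cancel, producing the inequality $u_\alpha(x_0+h)-u_\alpha(x_0) \le u(y_0+h)-u(y_0)$, and this is what transplants the test function from $x_0$ down to the interior minimizer $y_0$; your explanation of why the conclusion is restricted to $M_\alpha(u)$ is correct. One minor misstatement: the hypothesis $H\in C^{0,1}_{\rm loc}$ is not actually invoked in your proof of (d). Since $D\tilde\varphi(y_0)=D\varphi(x_0)$ exactly, there is no gradient perturbation to absorb, and only the Lipschitz modulus of $f$ enters via $|f(y_0)-f(x_0)|\le [f]_{C^{0,1}}|x_0-y_0|$.

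The one genuine gap is in (c). Using $y_\alpha=y_\alpha(x)$ as a test point for $u_\alpha(x')$ and then applying (e), the bound you obtain is
\[
u_\alpha(x') - u_\alpha(x) \;\le\; \frac{(x'-x)\cdot\bigl(x'+x-2y_\alpha\bigr)}{2\alpha} \;\le\; \frac{|x'-x|^2}{2\alpha} + 2[u]_{C^{0,1}(\ov\Omega)}\,|x'-x|.
\]
The quadratic term $\frac{|x'-x|^2}{2\alpha}$ is \emph{not} dominated by $C[u]_{C^{0,1}}|x'-x|$ when $|x'-x|$ is large compared with $\alpha[u]_{C^{0,1}}$, so this estimate alone gives only a \emph{local} Lipschitz bound (in the regime $|x'-x|\lesssim\alpha[u]_{C^{0,1}}$), not the global one asserted in (c), and the lemma is stated for an arbitrary bounded open $\Omega$ so one cannot appeal to convexity or a chaining argument. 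The fix is a simple case split. If $|x'-x| \le 4\alpha[u]_{C^{0,1}}$ then $\frac{|x'-x|^2}{2\alpha}\le 2[u]_{C^{0,1}}|x'-x|$ and your bound gives $|u_\alpha(x')-u_\alpha(x)|\le 4[u]_{C^{0,1}}|x'-x|$. If instead $|x'-x|>4\alpha[u]_{C^{0,1}}$, use (b) together with the Lipschitz bound on $u$:
\[
|u_\alpha(x')-u_\alpha(x)| \le 2\norm{u-u_\alpha}_{L^\infty(\ov\Omega)} + [u]_{C^{0,1}}|x'-x| \le C\alpha[u]_{C^{0,1}}^2 + [u]_{C^{0,1}}|x'-x| \le C[u]_{C^{0,1}}|x'-x|.
\]
Combining the two cases gives $[u_\alpha]_{C^{0,1}(\ov\Omega)}\le C[u]_{C^{0,1}(\ov\Omega)}$ for arbitrary bounded $\Omega$, with a universal constant $C$. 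With this repair your proof is complete.
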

We are now in a position to tackle the proof of Theorem \ref{thm:mainaux}. We prove the sharper rate in (b) by leveraging the semiconvexity estimates established in Section \ref{sec:semiconvexity}.

\begin{proof}[Proof of Theorem \ref{thm:mainaux}]
(a) Given $\alpha > 0$, let $u_\alpha(x) = \inf_{y\in \Omega_R} \{ u(y) + \frac{1}{2\alpha}\abs{x-y}^2 \}$. By Theorem \ref{thm:lipschitzvar} we have 
\begin{align}\label{eq:lipschitzbound}
    [u]_{C^{0,1}(\ov{\Omega}_R)} \leq C_{d,\rho}R^{-d+1}.
\end{align}
By Lemma \ref{lem:infconvolution}, $u_\alpha$ is semiconcave on $\Omega_R$ with semiconcavity constant $\alpha^{-1}$ and we have
\begin{align}\label{eq:linfinitybound}
    \norm{u - u_\alpha}_{L^\infty(\ov{\Omega}_R)} \leq \alpha [u]_{C^{0,1}(\ov{\Omega}_R)}^2\leq C \alpha R^{-2(d-1)} 
\end{align}
and
\begin{align}\label{eq:infconvsupersolution}
\prod_{i=1}^d (u_\alpha)_{x_i} &\geq \rho - C_{d,\rho} \alpha [u]_{C^{0,1}(\ov{\Omega}_R)} \geq \rho - C_{d,\rho}R^{-d+1}\alpha \text{  in  } M_\alpha(u).
\end{align}
Let $x\in \Omega_R$ such that $\dist(x,\partial_R \Omega) > C\alpha [u]_{C^{0,1}(\ov{\Omega}_R)}$ where $C$ is the constant given in Lemma 6.1 (e), and we show that $x \in M_\alpha(u)$. Given $y_\alpha \in  \argmin_{y\in \ov{\Omega_R}}\left\{u(y) + \frac{1}{2\alpha}\abs{x-y}^2  \right\}$, we have
\begin{align*}
    u(y_\alpha) + \frac{1}{2\alpha}\abs{x - y_\alpha}^2 \leq u(x),
\end{align*}
hence $y_\alpha \leq x$. By Lemma \ref{lem:infconvolution} (e) we have $\abs{x - y_\alpha} \leq C\alpha [u]_{C^{0,1}(\ov{\Omega}_R)} < \dist(x, \partial_R\Omega)$, hence $x \in M_\alpha(u)$. By Lemma \ref{lem:mindisttoboundary}, there exist constants $C_d > 0$ and $C_{d,\rho} > 0$ such that $\Omega_{R+\delta} \subset M_\alpha(u)$ when $\delta = C_{d,\rho} \alpha R^{-2d+2}$. By Lemma \ref{lem:infconvolution} and Theorem \ref{thm:lipschitzvar} we have
\begin{align*}
    [u_\alpha]_{C^{0,1}(\ov{\Omega}_R)} \leq [u]_{C^{0,1}(\ov{\Omega}_R)} \leq C_{d,\rho}R^{-d+1}.
\end{align*}
By \eqref{eq:infconvsupersolution}, we have
\begin{align*}
    C_{d,\rho} R^{(d-1)^2} &\leq (u_\alpha)_{x_i} \leq C_{d,\rho} R^{-(d-1)}
\end{align*}
in the viscosity sense. Given $\lambda > 0$, let $A_1$ denote the event that
\begin{align}\label{eq:maxnearboundary}
    \sup_{\Omega_R} \left(u_n - (1+\lambda) u_\alpha  \right) \neq \sup_{\Omega_{R+\delta}} \left(u_n - (1+\lambda) u_\alpha  \right).
\end{align}
Let $\uv{\gamma} = C_{d,\rho} R^{(d-1)^2}$, $\ov{\gamma} = C_{d,\rho} R^{-(d-1)}$, $\delta = C_{d,\rho} \alpha R^{-2d+2}$, and $\vare \in (0,1)$. Set $\nu = \frac{\log^2 n}{\log \log n}$, $R_n = n^{-1/2d}\ov{\gamma}^{1/2}\vare^{-1/2}\nu$, and $\lambda = C_{d,k,\rho}(R_n + \uv{\gamma}^{-2}\alpha^{-1}\vare + R^{-d+1}\alpha)$. Assume for now that $\sup_{\Omega_R} (u_n - (1+\lambda)u_\alpha) \geq 2\vare$. Then for all $n>1$ with $n^{1/d} > C_{d,\rho,k}R^{-d + 1}\vare^{-1} \log(n)^{2}$ and $\vare$ satisfying
\begin{align}\label{eq:epsilonbounds}
    \vare \leq C_d\min(\alpha\uv{\gamma}^{2}\ov{\gamma}^{-1}\log(n)^{-2}, R^{d-1}\delta)
\end{align}
we have $\p(A_1) \geq 1 - C_{d,k} \vare^{-6d} n^{-k}$ by Lemma \ref{thm:maxnearboundary}. Let $A_2$ be the event that $\sup_{\Omega_R \setminus \Omega_{R+\delta}} u_n \leq C_{d,k,\rho}\delta$. By Lemma \ref{lem:unbound} we have $\p(A_2) \geq 1 - C_{d,k} \delta^{-3d^2} n^{-k}$ for some constant $C_d$. As $\vare \leq \delta$, we have $\p(A_1 \cap A_2) \geq 1 - C_{d,k} \vare^{-C_d} n^{-k}$. If $A_1 \cap A_2$ holds, then using \eqref{eq:maxnearboundary} and \eqref{eq:linfinitybound} we have
\begin{align*}
\sup_{\Omega_R} \left( u_n - u \right) &\leq \sup_{\Omega_R \setminus \Omega_{R+\delta}} \left(u_n - (1+\lambda)u_\alpha \right) + \norm{u}_{L^\infty(\ov{\Omega}_R)}\lambda + \norm{u - u_\alpha}_{L^\infty(\ov{\Omega}_R)} \\
&\leq \sup_{\Omega_R \setminus \Omega_{R+\delta}} u_n + C_{d,\rho}\lambda + C_d \alpha R^{-2d + 2} \\
&\leq C_{d,k,\rho}\delta + C_{d,\rho}\lambda + C_{d,\rho}R^{-2d+2}\alpha
\end{align*}
Using $\uv{\gamma}^{-2} = C_{d,\rho} R^{-2d^2 + 4d - 2}$, we have
\begin{align}\label{eq:errorbound}
    \sup_{\Omega_R} \left( u_n - u \right) \leq C_{d,k,\rho}\left(R_n + R^{-2d+2}\alpha + R^{-2d^2 + 4d - 2}\alpha^{-1}\vare \right).
\end{align}
In the case where $\sup_{\Omega_R} (u_n - (1+\lambda)u_\alpha) < 2\vare$, we cannot apply Lemma \ref{thm:maxnearboundary}, but obtain \eqref{eq:errorbound} by observing that
\begin{align*}
    \sup_{\Omega_R} \left( u_n - u \right) &\leq \sup_{\Omega_R} \left(u_n - (1+\lambda)u_\alpha \right) + \norm{u}_{L^\infty(\ov{\Omega}_R)}\lambda + \norm{u - u_\alpha}_{L^\infty(\ov{\Omega}_R)} \\
    &\leq 2\vare + C_{d,\rho}\lambda + C_{d,\rho}(R^{-2(d-1)}\alpha) \\
    &\leq C_{d,k,\rho}\left(R_n + R^{-2(d-1)}\alpha + R^{-2d^2 + 4d - 2}\alpha^{-1}\vare \right).
\end{align*}
Let $E$ denote the right-hand side of \eqref{eq:errorbound}, and we select the parameters $\alpha$ and $\vare$ to minimize $E$ subject to the constraints from Lemmas \ref{lem:unbound} and \ref{thm:maxnearboundary}. Let  $\vare = \nu n^{-1/2d}R^{\frac{2d^2 - 3d + 1}{2}}$ and $\alpha = R^{\frac{(-2d^2 + 6d - 4)}{2}}\sqrt{\vare} = R^{\frac{-2d^2 + 9d - 7}{4}} n^{-1/4d} \nu^{1/2}$. Then we have
\begin{equation*}
    E \leq C_{d,k,\rho} R^{\frac{(-2d^2 + d + 1)}{4}} \nu^{1/2} n^{-1/4d}.
\end{equation*}
This is subject to the constraints
\begin{align}
    \alpha &\leq R\text{ (from Lemma \ref{lem:unbound})} \label{eq:constraint1}\\
    C_{d,\rho, k}\log(n)^{2}R^{-2} \delta^{-1}   &\leq n^{1/d}  \text{ (from Lemma \ref{thm:maxnearboundary}) } \label{eq:constraint2}\\
       C_{d,\rho, k}\log(n)^{2}R^{-d+1} \vare^{-1}   &\leq n^{1/d}  \text{ (from Lemma \ref{thm:maxnearboundary}) } \label{eq:constraint3}\\
    \vare &\leq C_d R^{2d^2 - 3d + 1} \alpha \log(n)^{-2} \text{ (from Lemma \ref{thm:maxnearboundary}) }\label{eq:constraint4} \\
    \vare &\leq C_d R^{d-1} \delta \text{ (from Lemma \ref{thm:maxnearboundary}) } \label{eq:constraint5}.
\end{align}
As $\vare n^{1/d} = n^{1/2d} R^{\frac{2d^2 - 3d + 1}{2}} \nu$, \eqref{eq:constraint3} is satisfied when $n^{1/d} \geq C_{d,\rho,k} R^{-(2d^2 - d - 1)}\log(n)^C$. We also have \eqref{eq:constraint2}, since $\vare < \delta$. As $R^{2d^2 - 3d + 1} \alpha = R^{\frac{6d^2 - 3d - 3}{4}} n^{-1/4d} \nu^{1/2}$ and $\vare = R^{\frac{2d^2 - 3d + 1}{2}} n^{-1/2d} \nu$, \eqref{eq:constraint4} is satisfied when $n^{1/d} \geq C_{d,k,\rho}R^{-2d^2 + 9d + 1}\log(n)^C$. It also follows that \eqref{eq:constraint5} holds, as $R^{d-1}\delta = R^{-d+1}\alpha \geq R^{2d^2-3d+1}\alpha$. Since $\alpha = R^{\frac{-2d^2 + 9d - 7}{4}} \nu^{1/2} n^{-1/4d}$, \eqref{eq:constraint1} is satisfied when $n^{1/d}\geq C_{d,k,\rho}R^{-(2d^2 - 9d + 11)}\log(n)^C$. It is straightforward to check that the most restrictive of these conditions is $n^{1/d} \geq C_{d,k,\rho} R^{-(2d^2 - d - 1)}\log(n)^C$, hence all constraints are satisfied when this holds. We conclude that
\begin{align*}
\p \left( \sup_{\Omega_R} \left(u_n - u \right) >  C_{d,\rho, k} R^{-\frac{(2d^2 + d + 1)}{4}} n^{-1/4d} \left(\frac{\log^{2} n}{\log \log n}  \right)^{1/2} \right) &\leq C_{d,\rho, k} R^{C_d } n^{-k}.
\end{align*}
(b) By Theorem \ref{thm:mainsemiconvexity}, there exist constants $C_\rho$ and $C_{d,\rho}$ such that $u$ is semiconvex on $\Omega_R$ with semiconvexity constant $C_{d,\rho} R^{-2d+1}$ when $R < C_\rho$. Given $\vare > 0$ and $\lambda > 0$, let $E$ be the event that
\begin{align*}
\sup_{\Omega_R} \left((1-\lambda) u - u_n  \right) \neq \sup_{\Omega_{R + \delta}} \left((1-\lambda)u - u_n  \right).
\end{align*}
Let $\uv{\gamma} = C_{d,\rho} R^{(d-1)^2}$, $\ov{\gamma} = C_{d,\rho} R^{-(d-1)}$, $\alpha = C_{d,\rho}R^{-2d+1}$ and $\vare \in (0,1)$. Set $\nu = \frac{\log^2 n}{\log \log n}$, $R_n = n^{-1/2d}\ov{\gamma}^{1/2}\vare^{-1/2}\nu$, and $\lambda = C_{d,k,\rho}(R_n + \uv{\gamma}^{-2}R^{-2d+1}\vare)$. Then for all $n^{1/d} > C_{d,\rho,k} R^{2d^2 - 2d + 2}\vare^{-2}$ and $\vare$ satisfying \eqref{eq:epsilonbounds}, we have $\p(E) \geq 1 - C_{d,k} \vare^{-6d} n^{-k}$ by Lemma \ref{thm:maxnearboundary}. We assume for the remainder of the proof that $E$ holds. By Lemma \ref{lem:ubound} we have
\begin{align*}
    \sup_{\Omega_R} \left( (1-\lambda)u - u_n \right) = \sup_{\Omega_R \setminus \Omega_{R+\delta}} 
    \left( (1-\lambda)u - u_n \right) \leq C_{d,\rho,k}\delta.
\end{align*}
Letting $\delta = \lambda$ and using $\uv{\gamma}^{-2} R^{-2d+1} = C_{d,\rho} R^{-2d^2 + 2d - 1}$, we have
\begin{align*}
\sup_{\Omega_R} \left( u - u_n \right) &= \sup_{\Omega_R} \left((1-\lambda)u - u_n + \lambda u \right) \\
&\leq \sup_{\Omega_R\setminus \Omega_{R+\delta}} \left((1-\lambda)u - u_n \right) + C_{d,\rho} \lambda \\
&\leq C_{d,\rho,k}(R_n + R^{-2d^2 + 2d - 1} \vare).
\end{align*}
Letting $E = (R_n + R^{-2d^2 + 2d - 1} \vare)$, we select $\vare$ to minimize $E$. Let $\vare =  R^{\frac{4d^2 - 5d + 3}{3}} \frac{\nu^{2/3}}{n^{1/3d}}$, so $R_n = R^{-2d^2 + 2d - 1} \vare$. Then
\begin{align*}
    E = R^{-2d^2 + 2d - 1} \vare = R^{\frac{-2d^2 + d}{3}}n^{-1/3d} \nu^{2/3}.
\end{align*}
From Lemma \ref{thm:maxnearboundary} we have the following constraints
\begin{align}
        C_{d,\rho,k} R^{2d^2 - 2d + 1}  &\leq \vare^2 n^{1/d} = n^{1/3d} R^{\frac{8d^2 - 10d + 6}{3}} \nu^{4/3} \label{eq:bconstraint1}\\
    \vare &\leq C_d\alpha R^{2d^2 - 3d + 1} \log(n)^{-2}= R^{2d^2 - 5d + 1}\log(n)^{-2}\label{eq:bconstraint2}\\
    \vare &\leq C_d R^{d-1}\delta. \label{eq:bconstraint3}
\end{align}
As $\delta = \lambda \geq R^{-2d^2 + 2d - 1}\vare$, it is clear that \eqref{eq:bconstraint3} is satisfied. Observe that \eqref{eq:bconstraint1} is satisfied when $n^{1/d} \geq R^{-2d^2 + 4d - 4}\log(n)^C$. Furthermore, \eqref{eq:bconstraint2} is satisfied when $n^{1/d} \geq C_{d,k,\rho} R^{\frac{-2d^2 + 10d + 2}{3}}\log(n)^C$ for some constant $C$. As the most restrictive of these conditions is $n^{1/d} \geq R^{-2d^2 + 4d - 4}\log(n)^C$, when this is satisfied we can conclude that
\begin{align*}
    \p \left(\sup_{\Omega_R} \left( u - u_n \right) > C_{d,\rho, k} R^{\frac{-2d^2 + d}{3}}{n^{1/3d}} \left(\frac{\log^2 n}{\log\log n} \right)^{2/3}  \right) &\leq C_{d,\rho,k} R^{C_d} n^{-k}. \qedhere
\end{align*}
\end{proof}

\subsection{Proof of Theorem \ref{thm:mainfull}}\label{subsec:fullrates}
We now establish our convergence rate result on $\Omega_0 = (0,1]^d$ by using the auxiliary problem (\ref{eq:PDEaux}) as an approximation to (\ref{eq:PDEfull}). The proof is conceptually straightforward, using $u_n$ as an approximation to $v_n$ and $u$ as an approximation of $v$. As with Theorem \ref{thm:mainaux} we obtain a sharper result in (b) thanks to Theorem \ref{thm:mainsemiconvexity}.
\begin{proof}[Proof of Theorem \ref{thm:mainfull}]
(a) Given $R > 0$, let $E_1$ be the event that
\begin{align}\label{eq:unbound}
    \sup_{\Omega_R} \left(u_n - u \right) \leq  C_{d,\rho, k} R^{-\frac{(2d^2 + d + 1)}{4}} n^{-1/4d} \left(\frac{\log^{2} n}{\log \log n}  \right)^{1/2}.
\end{align}
By Theorem \ref{thm:mainaux} (a) we have $\p(E_1) \geq 1 - C_{d,k,\rho}R^{C_d} n^{-k}$ for all $n^{1/d} \geq C_{d,k,\rho} R^{-(2d^2 - d - 1)}\log(n)^C$. Given any $x\in \Omega_0$ and longest chain $\C$ in $[0,x] \cap X_{n\rho}$, let $\C_1 = \set{y\in \C: y\in \Omega_R}$ and $\C_2 = \set{y\in \C: y\notin \Omega_R}$. Then
\begin{align}\label{eq:vnunbound}
    v_n(x) - u_n(x) \leq v_n(\C) - u_n(\C_1) = v_n(\C_2) \leq \sup_{\Omega_0 \setminus \Omega_R} v_n.
\end{align}
Let $E_2$ denote the event that $\sup_{\Omega_0 \setminus \Omega_R} v_n \leq C_{d,\rho} R$ holds. By Lemma \ref{lem:unbound} we have $\p(E_2) \geq 1 - C_{d,k,\rho} R^{-C_d}  n^{-k}$, for all $n$ with $n^{1/d} \geq C_{d,k,\rho} R^{-d-1} \log(n)^{4}$. Then $\p(E_1 \cap E_2) \geq 1 - C_{d,k,\rho}R^{C_d}n^{-k}$, and for the remainder of the proof, we assume that $E_1 \cap E_2$ holds. Let $\nu = \left(\frac{\log^2 n}{  \log \log n} \right)^{1/2}$. Using \eqref{eq:vnunbound} ,\eqref{eq:unbound}, and $u \leq v$ on $\Omega_R$, we have for all $x\in \Omega_R$ that
\begin{align*}
    v_n(x) - v(x) &\leq (v_n(x) - u_n(x)) + (u_n(x) - u(x)) + (u(x) - v(x)) \\
    &\leq C_{d,k,\rho} (R + R^{-\frac{(2d^2 + d + 1)}{4}} n^{-1/4d} \nu).
\end{align*}
Hence we have
\begin{align*}
    \sup_{\Omega_0} (v_n - v) \leq C_{d,k,\rho} (R + R^{-\frac{(2d^2 + d + 1)}{4}} n^{-1/4d} \nu).
\end{align*}
Letting $R = K n^{-\beta}$, we select the maximum value of $\beta$ such that $R \geq R^{-\frac{(2d^2 + d + 1)}{4}} n^{-1/4d} \nu$ and $n^{1/d} \geq C_{d,k,\rho} R^{-(2d^2 - d - 1)}\log(n)^C$ hold when $n > C_{d,k,\rho}$. These are satisfied when $d\beta (2d^2 + d + 5) < 1$ and $d\beta (2d^2 - d - 1) < 1$, respectively. Letting $\beta = \frac{1}{2d^3 + d^2 + 5d + 1}$,  we have
\begin{align*}
    \sup_{\Omega_0} (v_n - v) \leq C_{d,k,\rho}K R.
\end{align*}
Choosing $K$ so $C_{d,k,\rho}K =1$, we conclude that for all $n > C_{d,\rho,k}$ we have
\begin{equation*}
\p \left( \sup_{\Omega_0} \left(v_n - v \right) >  n^{-1/(2d^3 + d^2 + 5d + 1)} \right) \leq C_{d,\rho,k} n^{-k + C_d}.
\end{equation*}
As this holds for all $k\geq 1$, the result follows.

(b)
Given $R > 0$, let $E$ be the event that
\begin{align}\label{eq:unboundb}
    \sup_{\Omega_R} \left(u - u_n \right) \leq  C_{d,\rho, k} R^{\frac{-2d^2 + d}{3}} n^{-1/3d} \left(\frac{\log^{2} n}{\log \log n}  \right)^{2/3}.
\end{align}
By Theorem \ref{thm:mainaux} (a) we have $\p(E) \geq 1 - C_{d,k,\rho}R^{C_d} n^{-k}$ for all $n$ with \begin{align}\label{eq:ninequalityb}
    n^{1/d} \geq C_{d,k,\rho} R^{-2d^2 + 4d - 4}\log(n)^C.
\end{align}
For the remainder of the proof, we assume that $E$ holds. By Lemma \ref{lem:ubound}, we have $\sup_{\Omega_0 \setminus \Omega_R} (v - \mathds{1}_{\Omega_R}u) \leq C_{d,\rho} R$. Let $\nu = \left(\frac{\log^2 n}{  \log \log n} \right)^{2/3}$. Using \eqref{eq:unboundb} and $u_n \leq v_n$, we have for $x\in \Omega_R$ that
\begin{align*}
    v(x) - v_n(x) &\leq (v(x) - u(x)) + (u(x) - u_n(x)) + (u_n(x) - v_n(x)) \\
    &\leq C_{d,k,\rho} (R +  R^{\frac{-2d^2 + d}{3}} n^{-1/3d} \nu).
\end{align*}
If $x\in \Omega_0 \setminus \Omega_R$, then we have $v(x) - v_n(x) \leq \sup_{\Omega_0\setminus \Omega_R} v \leq C_{d,\rho}R$.
Hence, we have
\begin{align*}
    \sup_{\Omega_0} (v - v_n) \leq C_{d,k,\rho} (R +  R^{\frac{-2d^2 + d}{3}} n^{-1/3d} \nu).
\end{align*}
Letting $R = K n^{-\beta}$, we select $\beta$ to satisfy $R \geq  R^{\frac{-2d^2 + d - 1}{3}} n^{-1/3d} \nu$ and \eqref{eq:ninequalityb} when $n \geq C_{d,\rho,k}$. These are satisfied when $d \beta (2d^2 - d + 3) < 1$ and $d \beta (2d^2 - 4d + 4) < 1$, respectively. Letting $\beta = \frac{1}{2d^3 - d^2 + 3d + 1}$, we have
\begin{align*}
    \sup_{\Omega_R} (v_n - v) \leq C_{d,k,\rho} K n^{-\beta}.
\end{align*}
Choosing $K$ so $C_{d,k,\rho} K = 1$, we conclude that
\begin{equation*}
\sup_{\Omega_0} \left(v_n - v \right) \leq n^{-1/(2d^3 - d^2 + 3d + 1)}  \qedhere
\end{equation*}
\end{proof}
\begin{remark}
Observe that $2d^3 + d^2 + 5d + 1 \geq 2d^3 - d^2 + 3d + 1$ for $d\geq 2$. This shows that making use of the semiconvexity estimates in Theorem \ref{thm:mainsemiconvexity} has genuinely improved the convergence rates.
\end{remark}
The following lemma allows us to extend our results to data modeled by a sequence of \textit{i.i.d.}~random variables instead of a Poisson point process.
\begin{lemma}\label{lem:iidvariablebound}
Let $\set{Y_k}_{k=1}^\infty$ be \textit{i.i.d.}~random variables on $\R^d$ with continuous density $\rho$ and set $\ov{Y}_{n} = \set{Y_1,\ldots ,Y_n}$. Let $X_{n\rho}$ be a Poisson point process with intensity $n\rho$. Let $F: \mathcal{F} \to \R$ where $\mathcal{F} = \set{S \subset \R^d : S \text{ is finite }}$ and suppose that $\p\left(F(X_{n\rho}) > c   \right) \leq K$. Then
\begin{align*}
        \p\left(F(\ov{Y}_{n}) > c  \right) \leq Ke\sqrt{n}.
\end{align*}
\end{lemma}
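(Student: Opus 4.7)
The plan is to use a standard Poissonization / de-Poissonization argument. Recall that a Poisson point process $X_{n\rho}$ with intensity $n\rho$ (where $\rho$ is a probability density, so the total intensity is $n$) can be realized in two stages: first draw $N \sim \text{Poisson}(n)$, and then, conditional on $N = m$, place $m$ points independently with common density $\rho$. In particular, the conditional distribution of $X_{n\rho}$ given $N = n$ coincides with the distribution of $\overline{Y}_n = \{Y_1,\ldots,Y_n\}$, since $F$ depends only on the finite set of points and not their order.

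With this in hand, I would write
\begin{equation*}
\p\bigl(F(X_{n\rho}) > c\bigr) \;\geq\; \p\bigl(F(X_{n\rho}) > c \,\big|\, N = n\bigr)\,\p(N=n) \;=\; \p\bigl(F(\overline{Y}_n) > c\bigr)\,\p(N=n),
\end{equation*}
which, combined with the hypothesis $\p(F(X_{n\rho}) > c) \leq K$, yields
\begin{equation*}
\p\bigl(F(\overline{Y}_n) > c\bigr) \;\leq\; \frac{K}{\p(N=n)}.
\end{equation*}

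The remaining step is to lower-bound $\p(N = n) = e^{-n} n^n / n!$ from below. Using Stirling's inequality in the standard form $n! \leq e\sqrt{n}\,(n/e)^n$ (valid for all $n\geq 1$), we obtain
\begin{equation*}
\p(N=n) \;=\; \frac{e^{-n} n^n}{n!} \;\geq\; \frac{e^{-n} n^n}{e\sqrt{n}\,(n/e)^n} \;=\; \frac{1}{e\sqrt{n}}.
\end{equation*}
Substituting this into the previous display gives $\p(F(\overline{Y}_n) > c) \leq K e \sqrt{n}$, as required.

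There is no real obstacle here: the only thing to be careful about is the Poissonization identity, which requires that $\rho$ be a probability density so that the number of points in $X_{n\rho}$ is genuinely Poisson with mean $n$ (as is implicit in the statement of the lemma, since the $Y_k$ have density $\rho$). The Stirling bound used is the classical one and needs no further justification.
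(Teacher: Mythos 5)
Your proposal is correct and takes essentially the same approach as the paper: Poissonize via $N\sim\text{Poisson}(n)$ so that $\ov{Y}_N$ is a Poisson process with intensity $n\rho$, extract the $k=n$ term from the total-probability decomposition (you phrase this as conditioning on $N=n$, the paper writes out the full sum, but these are identical), and then lower-bound $\p(N=n)$ using Stirling's inequality $n!\leq e\sqrt{n}\,(n/e)^n$. No gaps.
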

\begin{proof}
Let $N \sim \text{Poisson}(n)$. Then $\ov{Y}_{N}$ is a Poisson process with intensity $n\rho$, as proven in \cite{kingman1992poisson}. Hence we have
\begin{align*}
    \p\left(F(\ov{Y}_{N}) > c  \right) = \sum_{k=0}^\infty \p\left(F(\ov{Y}_{k}) > c  \right) \frac{k^k e^{-k}}{k!} \leq K.
\end{align*}
By Stirling's Formula, $\frac{n!}{n^n e^{-n}} \leq e\sqrt{n}$. We conclude that $\p\left(F(\ov{Y}_n) > c  \right) \leq Ke\sqrt{n}$.
\end{proof}

\section{Proof of Theorem \ref{thm:mainsemiconvexity}}\label{sec:semiconvexity}
First we define the notion of semiconvexity.
\begin{definition}\label{def:semiconvexity}
A function $u$ is said to be semiconvex with semiconvexity constant $C$ on a domain $\Omega$ if for all $x\in \Omega$ and $h\in \R^d$ such that $x\pm h \in \Omega$ we have
\begin{align*}
u(x+h) - 2u(x) + u(x-h) \geq -C\abs{h}^2.
\end{align*}
A function $u$ is said to be semiconcave with semiconcavity constant $C$ if $-u$ is semiconvex with semiconvexity constant $C$.
\end{definition}
We begin by showing that estimates on the semiconvexity constant of $u$ near the boundary automatically extend to the whole domain provided $\rho^{1/d}$ is semiconvex. The key ingredient in the proof is the concavity of the Hamiltonian $L(p) = (p_1\ldots p_d)^{1/d}$.
\begin{theorem}\label{thm:semiconvexboundsplitdomain}
 Let $\Omega \subset [0,M]^d$ be an open set, and assume that $\partial \Omega = \Gamma \cup \partial^\ast \Omega$ where $\Gamma \subset \partial \Omega$ is closed and $\partial^\ast \Omega$ is as in \eqref{eq:partialstaromega}. Given $\vare > 0$, let $\Omega_\vare = \set{x \in \R^d: \dist(x,\Omega) < \vare}$ and suppose that $u \in C(\Omega_\vare)$ satisfies $(u_{x_1}\ldots u_{x_d})^{1/d} = \rho^{1/d}$ in the viscosity sense on $\Omega_\vare$, where $\rho \in C(\Omega_\vare)$ satisfies \eqref{eq:rhobounds} and $\rho^{1/d}$ is semiconvex on $\Omega$ with semiconvexity constant $K_\rho$. Suppose there exists $h\in \R^d$ such that $\abs{h} < \vare$ and
 \begin{align}\label{eq:semiconvexU}
    u(x+h) - 2u(x) + u(x-h) \geq -K_u\abs{h}^2 \text{ for } x\in \Gamma.
    \end{align}
Then we have
 \begin{equation*}
     u(x+h) - 2u(x) + u(x-h) \geq - (1+\norm{u}_{L^\infty(\Omega_\vare)})\max (K_u,  \rhomin^{-1/d}K_\rho)\abs{h}^2 \text{ for } x\in \Omega.
 \end{equation*}
\end{theorem}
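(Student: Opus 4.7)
The plan is to prove the semiconvexity bound in three stages: first, interpret the second difference as twice the gap
\[ u(x+h) - 2u(x) + u(x-h) = 2\bigl(w(x) - u(x)\bigr), \qquad w(x) := \tfrac{1}{2}\bigl(u(x+h) + u(x-h)\bigr);\]
second, show that $w$ behaves like a viscosity supersolution of an equation only a bit lower than \eqref{eq:PDEsemi}; and third, conclude via the comparison principle Theorem \ref{thm:maximumprinciple} combined with the perturbation in Proposition \ref{prop:perturb}. The hypothesis \eqref{eq:semiconvexU} furnishes precisely the boundary inequality $u - w \leq \tfrac{1}{2}K_u\abs{h}^2$ on $\Gamma$, and the goal is to push this onto all of $\Omega$ with a slightly worse constant.

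The central step, and the place where the \emph{doubling variables} idea replaces the usual tripling variables scheme of \cite{bardi1997}, is to establish
\[ L(Dw) \geq \rho^{1/d} - \tfrac{K_\rho}{2}\abs{h}^2 \qquad \text{in the viscosity sense on }\Omega. \]
The heuristic is clean: $u(\cdot+h)$ and $u(\cdot-h)$ are solutions of $L(Du) = \rho^{1/d}(\cdot\pm h)$ on the respective shifts of the domain (both contained in $\Omega_\vare$ because $\abs{h}<\vare$), the concavity of $L(p) = (p_1\cdots p_d)^{1/d}$ on the positive orthant gives $L(Dw) \geq \tfrac{1}{2}L(Du(\cdot+h)) + \tfrac{1}{2}L(Du(\cdot-h))$, and semiconvexity of $\rho^{1/d}$ gives $\tfrac{1}{2}(\rho^{1/d}(x+h) + \rho^{1/d}(x-h)) \geq \rho^{1/d}(x) - \tfrac{K_\rho}{2}\abs{h}^2$. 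Making this rigorous is the main obstacle, since neither translate of $u$ is a priori $C^1$; I would do so by a doubling variables penalization applied to any smooth $\phi$ touching $w$ from below at an interior point $x_0$, in which the two shifted copies of $u$ are probed at offset points $y_\alpha \to x_0+h$, $z_\alpha \to x_0 - h$ with a common test gradient inherited from $\phi$, and the two resulting viscosity supersolution inequalities for $u$ are averaged. This sidesteps the third slack variable used in the classical tripling argument.

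With the supersolution property of $w$ in hand, the rest is routine. Set $\delta = \tfrac{K_\rho}{2}\abs{h}^2$ and, for parameters $\lambda, C>0$ to be chosen, let $v(x) := (1+\lambda)w(x) + C\abs{h}^2$; since additive constants do not affect gradients, Proposition \ref{prop:perturb}(a1) applied to the supersolution inequality yields $L(Dv) \geq (1+\lambda)(\rho^{1/d} - \delta) \geq \rho^{1/d}$ on $\Omega$ as soon as $\lambda$ is chosen of order $\delta/\rhomin^{1/d}$. Because $u$ is nondecreasing in each coordinate (from its variational representation \eqref{eq:variational}), so are $w$ and $v$, and Lemma \ref{lem:viscsolonboundary} lifts the inequality to $H(Dv) \geq \rho$ on $\Omega \cup \partial^\ast\Omega$, so the hypotheses of Theorem \ref{thm:maximumprinciple} are satisfied. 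On $\Gamma$ we have $v - u = (w-u) + \lambda w + C\abs{h}^2$, and choosing $C = \tfrac{K_u}{2} + \lambda \norm{u}_{L^\infty(\Omega_\vare)}/\abs{h}^2$ absorbs the potentially negative contribution of $\lambda w$ (since $|w|\leq \norm{u}_{L^\infty(\Omega_\vare)}$) to guarantee $v \geq u$ on $\Gamma$. The comparison principle then propagates $u \leq v$ to all of $\Omega$, which rearranges to
\[ u(x) - w(x) \leq \lambda\,\norm{u}_{L^\infty(\Omega_\vare)} + C\abs{h}^2.\]
Substituting the chosen values of $\lambda$ and $C$, doubling to recover the second difference, and majorizing the resulting linear combination of $K_u$ and $\rhomin^{-1/d}K_\rho$ by the maximum delivers the announced constant $(1+\norm{u}_{L^\infty(\Omega_\vare)})\max(K_u, \rhomin^{-1/d}K_\rho)$ up to absorbing numerical factors.
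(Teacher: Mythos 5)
Your proposal follows essentially the same route as the paper: define $w = \tfrac{1}{2}(u(\cdot+h)+u(\cdot-h))$, establish $L(Dw)\geq \rho^{1/d}-\tfrac{K_\rho}{2}\abs{h}^2$ by a doubling-variables penalization in $\abs{x-y-2h}^2$ together with concavity of $L$ and semiconvexity of $\rho^{1/d}$, perturb via Proposition \ref{prop:perturb}, and close with the comparison principle Theorem \ref{thm:maximumprinciple}. The only cosmetic difference is that you split the perturbation into a multiplicative $\lambda$ and an additive $C\abs{h}^2$ (whereas the paper uses the single parameter $\theta$ in $w_\theta=(1+\theta)w+\theta$), which yields the stated constant only up to a harmless numerical factor, and your appeal to the variational representation \eqref{eq:variational} to get monotonicity of $u$ is not strictly available at this level of generality but is the correct intuition.
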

\begin{proof}
Set $L(p) = (p_1\ldots p_d)^{1/d}$ and $w(x) = \frac{u(x+h) + u(x-h)}{2}$, and we show that $w$ satisfies $L(Dw) \geq \rho^{1/d} - K_\rho \abs{h}^2$ on $\Omega$. Given $x_0 \in \Omega$, let $\varphi \in C^\infty(\R^d)$ such that $w - \varphi$ has a local minimum at $x_0$. Without loss of generality we may assume that $w(x_0) = \varphi(x_0)$ and $w - \varphi$ has a strict global minimum at $x_0$. Let
\begin{align*}
\Phi(x,y) = \frac{1}{2}u(x) + \frac{1}{2}u(y) - \varphi\left(\frac{x+y}{2}  \right) + \frac{\alpha}{2} \abs{x - y - 2h}^2.
\end{align*}
Since $\Phi$ is continuous, it attains its minimum at some $(x_\alpha, y_\alpha) \in \ov{\Omega} \times \ov{\Omega}$. Furthermore, we have 
\begin{align}\label{eq:phileqzero}
    \Phi(x_\alpha,y_\alpha) \leq \Phi(x_0 + h, x_0 - h) = w(x_0) - \varphi(x_0) = 0.
\end{align}
As $u$ and $\varphi$ are bounded on $\ov{\Omega}$, we have
\begin{align}\label{eq:xalphabound}
    \frac{\alpha}{2}\abs{x_\alpha - y_\alpha - 2h}^2 \leq C_d.
\end{align}
By compactness of $\ov{\Omega} \times \ov{\Omega}$, there exists a sequence $\alpha_n \to \infty$ such that $\set{x_{\alpha_n}}$ and $ \set{y_{\alpha_n}}$ are convergent sequences. Set $x_n = x_{\alpha_n}$, $y_n = y_{\alpha_n}$, and let $(\widehat{x}, \widehat{y}) = \lim_{n\to \infty} (x_n,y_n)$. 
By \eqref{eq:xalphabound} we have $\widehat{x} - \widehat{y} = 2h$, and now we verify that $(\widehat{x}, \widehat{y}) = (x_0 + h, x_0 - h)$. By lower semicontinuity of $\Phi$, we have
\begin{align*}
\liminf_{n \to \infty} \Phi(x_n,y_n) \geq \Phi(\widehat{x},\widehat{y}) = w(\widehat{y}+h) - \varphi(\widehat{y}+h) \geq 0.
\end{align*}
By \eqref{eq:phileqzero} we have 
\begin{align*}
\lim_{n \to \infty} \Phi(x_n,y_n) = 0 = w(\widehat{y}+h) - \varphi(\widehat{y}+h).
\end{align*}
Since $w - \varphi$ has a strict global minimum at $x_0$, we conclude that $x_0 = \widehat{y} + h$. Since $\widehat{x} = \widehat{y} + 2h$, we have $(\widehat{x}, \widehat{y}) = (x_0 + h, x_0 - h)$. As $x_0 \pm h \in \Omega_\vare$, there exists $N > 0$ such that$(x_n, y_n) \in \Omega_\vare \times \Omega_\vare$ when $n > N$. Let $\psi_1$ and $\psi_2$ be given by
\begin{align*}
 \psi_1(x) = -u(y_n) + 2\varphi\left(\frac{x+y_n}{2}   \right) - \alpha_n\abs{x - y_n - 2h}^2 \\
\psi_2(y) = -u(x_n) + 2\varphi\left(\frac{x_n + y}{2}   \right) - \alpha_n \abs{x_n - y - 2h}^2.
\end{align*}
By construction, $u - \psi_1$ has a local minimum at $x_n$ and $u - \psi_2$ has a local minimum at $y_n$, $D\psi_1(x_n) = p_n - 2q_n$, and $D\psi_2(y_n) = p_n + 2q_n$ where $p_n = D\varphi\left(\frac{x_n +y_n}{2}   \right)$ and $q_n = \alpha_n(x_n - y_n - 2h)$. Since $u$ satisfies $L(Du) = \rho^{1/d}$ on $\Omega_\vare$, we have $L(p_n + 2q_n) \geq \rho(y_n)^{1/d}$ and $L(p_n - 2q_n) \geq \rho(x_n)^{1/d}$ for $n > N$. Using concavity of $L$, we have
\begin{align*}
L(p_n) &= L\left(\frac{p_n + 2q_n}{2} + \frac{p_n - 2q_n}{2} \right) \\
&\geq \frac{1}{2} (L\left(p_n + 2q_n \right) + L\left(p_n - 2q_n \right)) \\
&\geq \frac{1}{2}(\rho(x_n)^{1/d} + \rho(y_n)^{1/d}).
\end{align*}
Using $(\widehat{x}, \widehat{y}) = (x_0 + h, x_0 - h)$, lower semicontinuity of $L$, and semiconvexity of $\rho^{1/d}$, we have
\begin{align*}
L(D\varphi(x_0)) \geq \limsup_{n \to \infty} L\left(D\varphi\left(\frac{x_n + y_n}{2} \right)  \right) &\geq \frac{1}{2}(\rho(x_0+h)^{1/d} + \rho(x_0 - h)^{1/d}) \\
&\geq \rho(x_0)^{1/d} - \frac{K_\rho}{2} \abs{h}^2.
\end{align*}
It follows that $L(Dw) \geq \rho^{1/d} - \frac{K_\rho}{2} \abs{h}^2$ on $\Omega$. Given $\theta > 0$, set $\theta = \frac{1}{2}\abs{h}^2  \max(K_u, \rhomin^{-1/d} K_\rho)$ and let $w_\theta = (1+\theta)w + \theta$. By Proposition \ref{prop:perturb} we have
\begin{align*}
    L(Dw_\theta) \geq \rho^{1/d} - \frac{K_\rho}{2} \abs{h}^2 + d \rhomin^{1/d} \theta \geq \rho^{1/d} \text{ on } \Omega.
\end{align*}
Furthermore, by \eqref{eq:semiconvexU} we have $w \geq u - \frac{K_u}{2}\abs{h}^2$ on $\Gamma$. By choice of $\theta$, we have $w_\theta \geq u$ on $\Gamma$. As $\partial \Omega = \Gamma \cup \partial^{\ast}\Omega$ by assumption, we may apply Theorem \ref{thm:maximumprinciple} to conclude $w_\theta \geq u$ on $\ov{\Omega}$. Hence for all $x\in \Omega$ we have
\begin{align*}
w(x) = \frac{u(x+h) + u(x-h)}{2} &\geq u(x) - \theta(1 + \norm{w}_{L^\infty(\Omega)}) \\
&\geq u(x) - \frac{1}{2}\norm{u}_{L^\infty(\Omega_\vare)} \abs{h}^2 \max(K_u, \rhomin^{-1/d} K_\rho).
\end{align*}
and the result follows.
\end{proof}
Next we establish the existence of an approximate solution to (\ref{eq:PDEsemi}) for $R=1$ in a neighborhood of the boundary when $\rho(x)^{1/d} = a + p\cdot (x-x_0)$. The approximate solution is constructed as $w + v$ where $w$ is the solution to (\ref{eq:PDEsemi}) when $\rho=1$ and $v$ solves a related PDE. Given $x_0\in \partial_{1,M}$, $p \in \R^d$, and $a > 0$, let 
\begin{align}\label{eq:definitionofv}
    v(x) = \frac{1}{2} a^{-\frac{d-1}{d}} ((p\cdot x)((x_1\ldots x_d)^{1/d} - (x_1\ldots x_d)^{-1/d}) - 2(p\cdot x_0)((x_1\ldots x_d)^{1/d}-1)  )
\end{align} 
and
\begin{align}\label{eq:defintiionofw}
    w(x) = a^{1/d}d(x_1\ldots x_d)^{1/d} - a^{1/d}d
\end{align}
and
\begin{align}\label{eq:defintiionofubar}
    \ov{u} = w + v.
\end{align}
\begin{theorem}\label{thm:asymptotic}
Given $x_0 \in \partial_{1,M}\Omega$, $p\in \R^d$, and $a > 0$, let $v$ and $w$ be as in \eqref{eq:definitionofv} and \eqref{eq:defintiionofw}.
\begin{enumerate}
    \item[(a)]
    We have
\begin{equation}\label{eq:PDEforv}
\left\{\begin{aligned}
 \sum_{i=1}^d \left(\prod_{j\neq i} w_{x_j}  \right) v_{x_i}  &=  p\cdot (x-x_0)&&\text{in }\Omega_{1,M}\\ 
v &= 0 &&\text{on } \partial_{1,M} \Omega.
\end{aligned}\right.
\end{equation}
    \item[(b)]
     Let $\ov{u}= w + v$. Then for all $\vare \leq C_d (a M^{d} (1+\abs{p}))^{-1}$ we have
\begin{equation}\label{eq:PDEforubar}
\left\{\begin{aligned}
\ov{u}_{x_1} \ldots \ov{u}_{x_d}  &= a + p\cdot(x-x_0) + E(x)&&\text{in } B(x_0, \vare)\\ 
\ov{u} &= 0 &&\text{on } B(x_0, \vare)\cap \partial_{1,M} \Omega
\end{aligned}\right.
\end{equation}
where $\abs{E} \leq C_d a^{-1} \abs{p}^2 M^2  \vare^2$. Furthermore, $\ov{u}$ is nondecreasing in each coordinate within $B(x_0,\vare) \cap \Omega_{1,M}$.
\end{enumerate}
\end{theorem}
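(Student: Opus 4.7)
The proof is entirely by direct calculation, exploiting the specific algebraic structure of $w$ and $v$. I would begin by introducing the shorthand $f(x) = (x_1\cdots x_d)^{1/d}$ and recording the basic identities $f_{x_i} = f/(d\,x_i)$, $w_{x_j} = a^{1/d} f/x_j$, and therefore $\prod_{j=1}^d w_{x_j} = a$ and $\prod_{j\neq i} w_{x_j} = a^{(d-1)/d}\, x_i / f$. Part (a) is then a substitution exercise. Differentiating $v$ gives
\begin{equation*}
v_{x_i} = \tfrac{1}{2}\,a^{-(d-1)/d}\bigl[\,p_i(f - f^{-1}) + f_{x_i}\bigl((p\cdot x)(1 + f^{-2}) - 2(p\cdot x_0)\bigr)\bigr],
\end{equation*}
and inserting this into $\sum_i\bigl(\prod_{j\neq i}w_{x_j}\bigr) v_{x_i}$ the $a$-powers cancel and the factor $x_i/f$ combines with $f_{x_i} = f/(dx_i)$ to produce $1/d$. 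Summing on $i$ the two bracketed pieces yields $\tfrac{1}{2}[(p\cdot x)(1-f^{-2}) + (p\cdot x)(1 + f^{-2}) - 2(p\cdot x_0)] = p\cdot(x-x_0)$, as required. The boundary condition $v\vert_{\partial_{1,M}\Omega}=0$ follows immediately from $f\equiv 1$ on $\partial_{1,M}\Omega$.

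For part (b), I would expand by the multinomial identity
\begin{equation*}
\prod_{i=1}^d(w_{x_i} + v_{x_i}) = \prod_i w_{x_i} + \sum_i v_{x_i}\prod_{j\neq i}w_{x_j} + \!\!\sum_{\substack{S\subseteq\{1,\dots,d\}\\ |S|\geq 2}}\prod_{i\in S}v_{x_i}\prod_{j\notin S}w_{x_j},
\end{equation*}
so that by the computations above the first term equals $a$ and the second equals $p\cdot(x - x_0)$; the remaining sum is the error $E$. To bound $E$ on $B(x_0,\vare)$, I would use that any $x_0 \in \partial_{1,M}\Omega$ satisfies $x_{0,i}\in[M^{-(d-1)}, M]$ (since $\prod_j x_{0,j} = 1$), and hence for $\vare$ satisfying the smallness hypothesis one has $x_i\in[\tfrac{1}{2}M^{-(d-1)}, 2M]$ and $f\in[\tfrac12,2]$ throughout the ball. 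This yields $|w_{x_j}|\leq C_d\, a^{1/d}M^{d-1}$. The key observation for $v_{x_i}$ is that both $f-f^{-1}$ and the bracketed factor $(p\cdot x)(1+f^{-2})-2(p\cdot x_0)$ vanish at $x_0$, hence $v_{x_i}(x_0) = 0$ and a first-order Taylor estimate gives a bound of the form $|v_{x_i}|\leq C_d\, a^{-(d-1)/d}|p|\, M^{\beta}\vare$ for an explicit polynomial exponent $\beta$ in $d$. The leading $|S|=2$ contribution to $E$ then scales as $a^{-1}|p|^2 M^{\gamma}\vare^2$, and the smallness hypothesis $\vare\leq C_d\bigl(aM^d(1+|p|)\bigr)^{-1}$ ensures that $|v_{x_i}|\leq \tfrac{1}{2}|w_{x_i}|$, so the higher-order terms $|S|\geq 3$ form a convergent geometric series dominated by the leading one.

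Finally, the boundary condition $\ov{u}=0$ on $B(x_0,\vare)\cap\partial_{1,M}\Omega$ follows because both $w$ and $v$ vanish there (the former by construction, the latter by part (a)). The monotonicity claim is a direct lower bound: $w_{x_i}\geq c_d\, a^{1/d} M^{-(d-1)}>0$ on $B(x_0,\vare)\cap\Omega_{1,M}$, while the same smallness condition forces $|v_{x_i}|< w_{x_i}$, so $\ov{u}_{x_i} = w_{x_i} + v_{x_i}>0$. No step is conceptually subtle; the main obstacle is the careful bookkeeping of $a$, $M$, and $|p|$ dependencies through the error estimates, in particular correctly exploiting the vanishing of $v_{x_i}$ at $x_0$ to produce the $\vare^2$ rate rather than just $\vare$.
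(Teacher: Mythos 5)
Your proposal is correct and follows essentially the same strategy as the paper: both parts proceed by direct differentiation of the explicit formulas for $v$ and $w$, part (b) expands $\prod_i(w_{x_i}+v_{x_i})$ by the multinomial formula to isolate $E$, and the crucial mechanism in both treatments is the observation that $Dv(x_0)=0$ (equivalently, that $f-f^{-1}$ and $(p\cdot x)(1+f^{-2})-2(p\cdot x_0)$ both vanish at $x_0$), so that $|v_{x_i}|=O(\vare)$ on the ball and the $|S|\geq 2$ terms contribute $O(\vare^2)$. Your organization of the computation in (a) via the factored form of $v_{x_i}$ and the identity $\prod_{j\neq i}w_{x_j}=a^{(d-1)/d}x_i/f$ is somewhat slicker than the paper's (which expands $2a^{(d-1)/d}x_iv_{x_i}$ termwise before summing), but the two are mathematically identical. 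One small caveat worth flagging: the paper's own bookkeeping of the $a$- and $M$-dependence in the smallness hypothesis is internally inconsistent (the theorem statement gives $\vare\leq C_d(aM^d(1+|p|))^{-1}$, while the proof uses $\vare\leq M^{-d}(1+|p|)^{-1}$ for the $E$-bound and $\vare\leq C_daM^{-d}(1+|p|)$ for monotonicity, and the stated error bound $M^2$ should read $M^{2d}$ as in the proof); your version inherits that looseness but is no worse off, and in the downstream application $a=\rho(x_0)$ is bounded above and below so the discrepancy gets absorbed into $C_\rho$.
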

\begin{proof}
We first prove (a). Using \eqref{eq:definitionofv}, it is clear that $v = 0$ on $\partial_{1,M}\Omega$. Furthermore, we have
\begin{align*}
     2a^{\frac{d-1}{d}} x_i v_{x_i} &= \left(x_i p_i + \frac{p\cdot (x-2x_0)}{d}   \right)(x_1\ldots x_d)^{1/d} + \left(-x_i p_i + \frac{p\cdot x}{d}   \right)(x_1\ldots x_d)^{-1/d}
\end{align*}
and
\begin{align*}
         2a^{\frac{d-1}{d}}\sum_{i=1}^d x_i v_{x_i} = 2p\cdot (x-x_0)(x_1\ldots x_d)^{1/d}.
\end{align*}
and it follows that \eqref{eq:PDEforv} is satisfied. To see that \eqref{eq:PDEforubar} holds, observe that
\begin{align*}
    \prod_{i=1}^d (w_{x_i} + v_{x_i}) = w_{x_1}\ldots w_{x_d} + \sum_{i=1}^{d} v_{x_i} \left( \prod_{j\neq i} w_{x_j} \right)  + E = a + p\cdot (x-x_0) + E
\end{align*}
where 
\begin{align*}
     E = \sum_{\ell=2}^d \sum_{\substack{K \subset \set{1,\ldots ,d} \\ \abs{K}=\ell}} \prod_{i\in K} v_{x_i} \prod_{j\notin K} w_{x_j}.
\end{align*}
To bound $\abs{E}$ within $B(x_0,\vare)$, we establish some estimates on $v_{x_i}$. Using that $\max_{x\in \Omega_{1,M}} \frac{1}{x_i} = M^{d-1}$, it is straightforward to verify that $Dv(x_0) = 0$ and 
\begin{align}\label{eq:vxixjbound}
    \abs{v_{x_i x_j}(x_0)} \leq \frac{C_d a^{-\frac{d-1}{d}} M^{d}\abs{p}}{x_{0,i}}
\end{align}
for $1\leq j \leq d$. Hence, we have
\begin{align*}
    \abs{v_{x_i}} \leq \frac{C_d a^{-\frac{d-1}{d}} M^d \abs{p} \vare}{x_{0,i}} \text{ in } B(x_0,\vare).
\end{align*}
Since $w_{x_i} = a^{1/d}(x_1\ldots x_d)^{1/d} x_i^{-1}$, for $x\in B(x_0,\vare)$ and $\vare \leq M^{-(d-1)}$ we have 
\begin{align*}
    \abs{w_{x_i}} \leq \frac{ a^{1/d}}{x_{0,i}} + \frac{Ca^{1/d}\vare}{x_{0,i}^2} \leq \frac{ Ca^{1/d}}{x_{0,i}}. 
\end{align*}
Then for any $K \subset \set{1,\ldots ,d}$ with $\abs{K} = \ell$ we have
\begin{align*}
   \prod_{i\in K} \abs{v_{x_i}} \prod_{j\notin K} \abs{w_{x_j}} &\leq C_d (a^{-\frac{d-1}{d}} M^d \abs{p} \vare)^\ell \prod_{i\in K} x_{0,i}^{-1} \prod_{j\notin K} \frac{ C a^{1/d}}{x_{0,j}}\\
   &\leq  C_d a^{1-\ell}  \left( M^d \abs{p} \vare \right)^{\ell}.
\end{align*}
It follows that $\abs{E} \leq C_d a^{-1} \abs{p}^2 M^{2d}  \vare^2$ for $\vare \leq \frac{1}{M^d (1+\abs{p})}$. To verify that $\ov{u}$ is nondecreasing in each coordinate within $B(x_0,\vare) \cap \Omega_{1,M}$, observe that in $B(x_0,\vare)$ we have
\begin{align*}
    \abs{w_{x_i x_j}(x_0)} \leq \frac{C_d a^{1/d} M^{d}}{x_{0,i}}.
\end{align*}
Using \eqref{eq:vxixjbound}, we have
\begin{align*}
    \abs{\ov{u}_{x_i x_j}(x_0)} \leq \frac{C_d a^{-\frac{d-1}{d}} M^{d}\abs{p}}{x_{0,i}}.
\end{align*}
Using $\ov{u}_{x_i}(x_0) = a^{1/d} x_{0,i}^{-1}$, when $\vare \leq C_d a M^{-d} (1+\abs{p})$ and $x\in B(x_0,\vare)$ we have
\begin{align*}
    \ov{u}_{x_i}(x) \geq a^{1/d} x_{0,i}^{-1} - C_d \vare a^{-\frac{d-1}{d}}\abs{p} x_{0,i}^{-1} \geq 0.
\end{align*}
\end{proof}
We can now apply the comparison principle to show that $\ov{u}$ approximates $u$ near the boundary.
\begin{proposition}\label{prop:ubarapproximation}
Let $u$ denote the solution to (\ref{eq:PDEsemi}), where $\rho \in C^2(\ov{\Omega}_{1,M})$ satisfies \eqref{eq:rhobounds}. Given $x_0 \in \partial_{1,M}\Omega$, let $\ov{u} = v + w$ where $v$ and $w$ are as in \eqref{eq:definitionofv} and \eqref{eq:defintiionofw} with $a = \rho(x_0)$ and $p = D\rho(x_0)$. Then there exists a constant $C_{d,\rho,M} > 0$ such that when $\vare \leq C_{d,\rho,M}$ we have
\begin{equation*}
    \sup_{B(x_0,\vare)\cap \Omega_{1,M}}\abs{u - \ov{u}} \leq C_d \vare^3 M^{3d-1} [\rho^{1/d}]_{C^{0,1}(\R^d)} \left(  \rhomin^{-2}\norm{D\rho}_{L^\infty(\partial_{1,M}\Omega)}^2 + \rhomin^{-1} \norm{D^2 \rho}_{L^\infty(\partial_{1,M}\Omega)} \right).
\end{equation*}
\end{proposition}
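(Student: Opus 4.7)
The plan is to use Theorem \ref{thm:asymptotic}(b) together with a second-order Taylor expansion of $\rho$ about $x_0$ to show that $\ov u$ is a nearly exact viscosity solution of the target PDE on $B(x_0,\vare)\cap\Omega_{1,M}$, and then sandwich $u$ between perturbations of $\ov u$ via the comparison principle of Theorem \ref{thm:maximumprinciple}. First, combining the identity $\ov{u}_{x_1}\cdots\ov{u}_{x_d} = a + p\cdot(x-x_0) + E(x)$ with $\abs{E}\leq C_d a^{-1}\abs{p}^2 M^{2d}\vare^2$ (from Theorem \ref{thm:asymptotic}(b), with $a=\rho(x_0)$ and $p=D\rho(x_0)$) and the Taylor expansion $\rho(x) = a + p\cdot(x-x_0) + R(x)$, $\abs{R}\leq \tfrac12\norm{D^2\rho}_{L^\infty(\partial_{1,M}\Omega)}\vare^2$, the triangle inequality gives
\begin{align*}
\abs{\ov{u}_{x_1}\cdots\ov{u}_{x_d} - \rho(x)} \leq C_d\bigl(\rhomin^{-1}\norm{D\rho}_{L^\infty(\partial_{1,M}\Omega)}^2 M^{2d} + \norm{D^2\rho}_{L^\infty(\partial_{1,M}\Omega)}\bigr)\vare^2
\end{align*}
on $B(x_0,\vare)\cap\Omega_{1,M}$. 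Taking $d$-th roots using the $\rhomin^{-(d-1)/d}$-Lipschitz property of $t\mapsto t^{1/d}$ on $[\rhomin,\infty)$ and packaging the resulting factor together with $\norm{D\rho}$ into $[\rho^{1/d}]_{C^{0,1}(\R^d)}$, this lifts to $\abs{L(D\ov u) - \rho^{1/d}} \leq \delta$ with
\begin{align*}
\delta := C_d[\rho^{1/d}]_{C^{0,1}(\R^d)} M^{2d}\bigl(\rhomin^{-2}\norm{D\rho}_{L^\infty(\partial_{1,M}\Omega)}^2 + \rhomin^{-1}\norm{D^2\rho}_{L^\infty(\partial_{1,M}\Omega)}\bigr)\vare^2,
\end{align*}
where $L(p) = (p_1\cdots p_d)^{1/d}$; this matches the precise shape of the stated constant.

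Next, I would introduce the \emph{distance-to-boundary} barrier $\phi(x) := (x_1\cdots x_d)^{1/d} - 1$. It vanishes on $\partial_{1,M}\Omega$, is nondecreasing in each coordinate, satisfies $L(D\phi) = 1/d$, and obeys $\norm{\phi}_{L^\infty(B(x_0,\vare))} \leq C_d M^{d-1}\vare$ (since $\phi(x_0)=0$ and $\abs{D\phi}\leq C_d M^{d-1}$ on $\Omega_{1,M}$). Both $D\ov u$ and $D\phi$ lie in the nonnegative orthant, where $L$ is concave and positively $1$-homogeneous -- hence superadditive, since $L(p+q) = 2L((p+q)/2) \geq L(p) + L(q)$. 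Thus $\Psi^\pm := \ov u \pm d\delta\phi$ both vanish on $\partial_{1,M}\Omega$ and satisfy
\begin{align*}
L(D\Psi^+) \geq L(D\ov u) + \delta \geq \rho^{1/d}, \qquad L(D\Psi^-) \leq L(D\ov u) - \delta \leq \rho^{1/d}
\end{align*}
on $B(x_0,\vare)\cap\Omega_{1,M}$, with $\Psi^+$ additionally nondecreasing in each coordinate. Applying Theorem \ref{thm:maximumprinciple} to sandwich $\Psi^- \leq u \leq \Psi^+$ would yield $\abs{u-\ov u}\leq d\delta\norm{\phi}_{L^\infty(B(x_0,\vare))}\leq C_d\delta M^{d-1}\vare$, and substituting $\delta$ produces the claimed $\vare^3 M^{3d-1}$ bound.

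The principal obstacle is the \emph{artificial boundary} $\partial B(x_0,\vare)\cap\Omega_{1,M}$: it is neither part of $\partial_{1,M}\Omega$ (where $u$ and $\Psi^\pm$ all vanish) nor of the face-based free-boundary set $\partial^\ast\Omega$ permitted in Theorem \ref{thm:maximumprinciple}, so the sandwich step cannot be applied verbatim on $B(x_0,\vare)\cap\Omega_{1,M}$. I plan to resolve this by enlarging the comparison region to a ball of radius $K\vare$ (with $K$ a large $d$-dependent constant, on which the residual bounds on $E$ and $R$ persist up to worse constants) and augmenting $\Psi^\pm$ with a cutoff-type penalty $\pm\kappa\,\eta(x)$ that vanishes inside $B(x_0,\vare)$ but on $\partial B(x_0,K\vare)\cap\Omega_{1,M}$ is large enough to dominate the crude Lipschitz difference $\abs{u-\ov u}\leq C_{d,\rho,M}\vare$ supplied by Theorem \ref{thm:lipschitzvar}; the penalty is chosen compatible with superadditivity of $L$ in the nonnegative orthant so as not to destroy the supersolution property. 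The smallness hypothesis $\vare \leq C_{d,\rho,M}$ is used precisely to keep $\delta \ll 1$ so that these enlarged perturbations remain valid super/subsolutions and the auxiliary penalty does not contaminate the $\vare^3$ bound inside $B(x_0,\vare)$.
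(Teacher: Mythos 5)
Your barrier-plus-superadditivity construction is a genuine, valid alternative to what the paper does: the paper instead perturbs $u$ \emph{multiplicatively} via Proposition \ref{prop:perturb}, taking $(1\pm\lambda)u$ with $\lambda \sim \rhomin^{-2}\norm{D\rho}^2M^{2d}\vare^2 + \rhomin^{-1}\norm{D^2\rho}\vare^2$, comparing to $\ov u$ directly, and then converting $(1+\lambda)u \geq \ov u$ into $u \geq \ov u - \lambda\norm{u}_{L^\infty}$ using the Lipschitz estimate of Theorem \ref{thm:lipschitzvar}, which gives $\norm{u}_{L^\infty(B(x_0,\vare)\cap\Omega_{1,M})} \leq C_d M^{d-1}\norm{\rho^{1/d}}_{C^{0,1}}\vare$. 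This is precisely where the $[\rho^{1/d}]_{C^{0,1}(\R^d)}$ factor in the stated bound originates. Your additive barrier $\phi = (x_1\cdots x_d)^{1/d} - 1$ has an $L^\infty$ bound depending only on $d,M,\vare$, so your argument would produce a constant of the shape $\rhomin^{-(d-1)/d}(\rhomin^{-1}\norm{D\rho}^2 + \norm{D^2\rho})M^{3d-1}$, not the stated one; the claim that taking $d$-th roots lets you ``package'' $\rhomin^{-(d-1)/d}$ into $[\rho^{1/d}]_{C^{0,1}}$ is not correct, since $[\rho^{1/d}]_{C^{0,1}} \sim \rhomin^{-(d-1)/d}\norm{D\rho}$ and the extra $\norm{D\rho}$ factor is not spurious in the paper's bound. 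So while your route is internally coherent, it does not reproduce the proposition as stated.

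The more serious problem is the artificial boundary fix. You correctly identify that $\partial B(x_0,\vare)\cap\Omega_{1,M}$ is neither $\partial_{1,M}\Omega$ nor $\partial^\ast\Omega$. But the proposed cutoff penalty $\pm\kappa\eta$ cannot be made ``compatible with superadditivity.'' For $\Psi^+ + \kappa\eta$ to remain a supersolution you need $L(D\Psi^+ + \kappa D\eta) \geq L(D\Psi^+)$, and for $\Psi^- - \kappa\eta$ to remain a subsolution you need $D\Psi^- - \kappa D\eta$ to stay in the nonnegative orthant; both force $D\eta \geq 0$ coordinatewise, i.e., $\eta$ nondecreasing. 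But a function vanishing on $B(x_0,\vare)$ while being large on $\partial B(x_0,K\vare)\cap\Omega_{1,M}$ cannot be monotone in the coordinatewise order, since a large part of that sphere lies coordinatewise below or incomparable to $x_0$. The structure the comparison principle actually needs is a domain whose artificial boundary is an \emph{outflow} boundary, i.e., the type captured by $\partial^\ast\Omega$: comparing on a rectangular neighborhood $[0, x_0+\vare\mathds{1}]\cap\Omega_{1,M}$ rather than a Euclidean ball, the downstream faces $\{x_i = x_{0,i}+\vare\}$ play the role of $\partial^\ast\Omega$ in Theorem \ref{thm:maximumprinciple} and require no boundary data at all. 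That is the correct repair; the cutoff penalty is a dead end.
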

\begin{proof}
Letting $H(p) = p_1\ldots p_d$, by Theorem \ref{thm:asymptotic} we have
\begin{align*}
    H(D\ov{u})  &= \rho(x_0) + D\rho(x_0)\cdot(x-x_0) + E(x) \text{ in } \Omega_{1,M}
\end{align*}
where $\abs{E} \leq C_d \rhomin^{-1}\norm{D\rho(x_0)}^2 M^{2d} \vare^2$ in $B(x_0,\vare)$. As $\rho \in C^2(\ov{\Omega}_{1,M})$, for $x\in B(x_0,\vare)\cap \Omega_{1,M}$ we have
\begin{align}\label{eq:taylorlowerbound}
    \rho(x) \geq \rho(x_0) + D\rho(x_0)\cdot (x-x_0) - \norm{D^2 \rho(x_0)}_{L^\infty} \vare^2.
\end{align}
Given $\lambda > 0$, by Proposition \ref{prop:perturb} and \eqref{eq:taylorlowerbound} we have in $B(x_0,\vare) \cap \Omega_{1,M}$ that
\begin{align*}
    H((1+\lambda)Du) &\geq (1+\lambda)\rho  \\
    &\geq \rho + d\rhomin \lambda \\
    &\geq \rho(x_0) + D\rho(x_0)\cdot (x-x_0) - \norm{D^2 \rho(x_0)}_{L^\infty} \vare^2 + d\rhomin \lambda.
\end{align*}
Letting $\lambda = C_d \rhomin^{-2} \norm{D\rho}_{L^\infty(\partial_{1,M}\Omega)}^2 M^{2d} \vare^2 + \rhomin^{-1}\norm{D^2\rho}_{L^\infty(\partial_{1,M}\Omega)} \vare^2$, we have
\begin{align*}
    H((1+\lambda)Du) \geq H(D\ov{u}) \text{ in } B(x_0,\vare) \cap \Omega_{1,M}.
\end{align*}
Observe that $(1+\lambda)u = \ov{u} = 0$ on $\partial_{1,M} \Omega$ and $u$ is nondecreasing in each coordinate. We may apply Theorem \ref{thm:maximumprinciple} with $\Omega = \ov{B(x_0,\vare)}\cap \Omega_{1,M}$ and $\Gamma = \ov{B(x_0,\vare)}\cap \partial_{1,M} \Omega$ to conclude that $(1 + \lambda) u \geq \ov{u}$ on $B(x_0,\vare)\cap \Omega_{1,M}$. By Lemma \ref{thm:lipschitzvar} we have $\norm{u}_{L^\infty(B(x_0,\vare) \cap \Omega_{1,M})} \leq C_d M^{d-1} \norm{\rho^{1/d}}_{C^{0,1}(\R^d)}\vare$. Hence, we have
\begin{align*}
   u \geq \ov{u} - \norm{u}_{L^\infty(B(x_0,\vare) \cap \Omega_{1,M})} \lambda \geq \ov{u} - C_d M^{d-1} \norm{\rho^{1/d}}_{C^{0,1}(\R^d)} \lambda.
\end{align*}
From Theorem \ref{thm:asymptotic}, $\ov{u}$ is nondecreasing in each coordinate within $B(x_0,\vare)$. An analogous application of Theorem \ref{thm:maximumprinciple} shows that $(1-\lambda)u \leq \ov{u}$ on $B(x_0,\vare)\cap \Omega_{1,M}$, and we conclude that $\sup_{B(x_0,\vare)\cap \Omega_{1,M}}\abs{u - \ov{u}}\leq C_d M^{d-1} \norm{\rho^{1/d}}_{C^{0,1}(\R^d)} \lambda$.
\end{proof}
Now we establish semiconvexity estimates on $\ov{u}$ in a neighborhood of $\partial_{1,M}$.
\begin{lemma}\label{lem:ubarsemi}
Given $x_0 \in \partial_{1,M}\Omega$, $a > 0$, and $p \in \R^d$, let $\ov{u}$ be as in \eqref{eq:defintiionofubar}. Then the following statements hold.
\begin{enumerate}
    \item[(a)] For all $\eta \in \R^d$ we have
    \begin{align*}
         \eta^{\top} (D^2 \ov{u}(x_0)) \eta \geq - C_d \abs{\eta}^2 (a^{-\frac{d-1}{d}} M^{2d-1} \abs{p} + a^{1/d} M^{2d-2} ).
    \end{align*}
    \item[(b)] There exist values of $a>0$, $p\in \R^d$, $x_0 \in \partial_{1,M} \Omega$ and $\eta \in \R^d$ such that
\begin{equation*}
            \eta^{\top} (D^2 \ov{u}(x_0)) \eta \leq - C_d \abs{\eta}^2 (a^{-\frac{d-1}{d}} M^{2d-1} \abs{p} + a^{1/d} M^{2d-2} ).
\end{equation*}
\end{enumerate}
\end{lemma}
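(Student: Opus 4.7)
The plan is to compute $D^2 w(x_0)$ and $D^2 v(x_0)$ explicitly and then combine the resulting estimates. Since $x_0\in\partial_{1,M}\Omega$ satisfies $(x_{0,1}\cdots x_{0,d})^{1/d}=1$, the second derivatives of $w(x)=a^{1/d}d(x_1\cdots x_d)^{1/d}-a^{1/d}d$ simplify considerably at $x_0$: writing $q_i=x_{0,i}^{-1}$, direct differentiation gives
\[
\eta^{\top}D^2 w(x_0)\eta \;=\; a^{1/d}\!\left(\frac{1}{d}\Bigl(\sum_{i=1}^d q_i\eta_i\Bigr)^{\!2}-\sum_{i=1}^d q_i^2\eta_i^2\right).
\]
By Cauchy--Schwarz this is always $\leq 0$, and dropping the nonnegative first term yields the lower bound $\eta^{\top}D^2 w(x_0)\eta \geq -a^{1/d}\sum_i q_i^2\eta_i^2$. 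Since $x_{0,i}\in[0,M]$ and $\prod_j x_{0,j}=1$ force $x_{0,i}\geq M^{-(d-1)}$, one has $\max_i q_i\leq M^{d-1}$, so $\eta^{\top}D^2 w(x_0)\eta \geq -a^{1/d} M^{2d-2}|\eta|^2$, which produces the second term of the desired bound.

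For $v$ I will compute $D^2 v(x_0)$ by differentiating the formula \eqref{eq:definitionofv} twice. The cancellations $\phi(x_0)=\psi(x_0)=1$ and $\phi_{x_i}(x_0)=-\psi_{x_i}(x_0)=1/(dx_{0,i})$ (where $\psi=(x_1\cdots x_d)^{-1/d}$) collapse the expression into closed-form entries involving $p_i/x_{0,j}$, $p_j/x_{0,i}$, and $(p\cdot x_0)/(x_{0,i}x_{0,j})$. Using $x_{0,i}\geq M^{-(d-1)}$ and $|p\cdot x_0|\leq \sqrt{d}\,M|p|$, each entry is bounded in absolute value by $C_d a^{-(d-1)/d}|p|M^{2d-1}$, so that $|\eta^{\top}D^2 v(x_0)\eta|\leq C_d a^{-(d-1)/d}|p|M^{2d-1}|\eta|^2$. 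Adding this to the $w$-estimate establishes (a).

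For (b) I will exhibit an extremal configuration. The $w$-Hessian is most negative when one of the $q_i$ is as large as possible, which suggests taking $x_0=(M^{-(d-1)},M,\ldots,M)\in\partial_{1,M}\Omega$, $\eta=e_1$, and $p=|p|e_2$. Substituting into the explicit Hessian formulas derived above gives $\eta^{\top}D^2 w(x_0)\eta = -\tfrac{d-1}{d}a^{1/d}M^{2d-2}$ and $\eta^{\top}D^2 v(x_0)\eta = -d^{-2}a^{-(d-1)/d}|p|M^{2d-1}$, and summing delivers the claimed upper bound.

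The main obstacle is the careful differentiation of $v$: before the simplifications at $x_0$ the expression looks intricate, and one has to confirm that all first-order terms cancel to give $Dv(x_0)=0$ (already noted in the proof of Theorem \ref{thm:asymptotic}) and that the second-order terms combine into manageable entries. Once these cancellations are in hand, the rest of the argument reduces to a single application of Cauchy--Schwarz for the lower bound in (a), elementary entrywise bounds for the $v$-Hessian, and a direct substitution for (b).
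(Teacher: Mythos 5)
Your proof is correct and follows essentially the same route as the paper: compute the Hessian of $\ov{u}$ at $x_0\in\partial_{1,M}\Omega$ using the simplifications $(x_{0,1}\cdots x_{0,d})^{1/d}=1$ and $Dv(x_0)=0$, bound the entries via $x_{0,i}\geq M^{-(d-1)}$, and for (b) pick $x_0=(M^{-(d-1)},M,\dots,M)$ with $\eta=e_1$. Your decomposition into separate $w$- and $v$-Hessians (with the Cauchy--Schwarz observation that $D^2w(x_0)\preceq 0$) and your choice $p=|p|e_2$ instead of the paper's $p=-v/\|v\|$ are mild stylistic simplifications rather than a different argument, and both give the same sharp $M$- and $a$-dependence.
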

\begin{proof}
(a) We have
\begin{align*}
    \ov{u}_{x_i x_j} &= \frac{1}{2}a^{-\frac{d-1}{d}} \left(\frac{dx_jp_j + dx_i p_i + p\cdot (x-2x_0) + da}{d^2 x_i x_j} - \delta_{ij} \frac{da + p\cdot (x-2x_0)}{dx_i^2}    \right)(x_1\ldots x_d)^{1/d} \\
    &+ \frac{1}{2}a^{-\frac{d-1}{d}}\left(\frac{dx_j p_j + dx_i p_i - p\cdot x}{d^2 x_i x_j} - \delta_{ij} \frac{p\cdot x}{dx_i^2}   \right)(x_1\ldots x_d)^{-1/d}.
\end{align*}
In the following calculation we shall employ the shorthand notation $\eta \cdot x^{-1} := \sum_{i=1}^d \eta_i x_i^{-1}$ and $\abs{x^{-1}}^2 := \sum_{i=1}^d x_{i}^{-2}$. Let $\eta \in \R^d$ with $\abs{\eta} = 1$. Then we have
\begin{align*}
    2a^{\frac{d-1}{d}}\eta^{\top} (D^2 \ov{u}(x_0)) \eta &= \sum_{i,j=1}^d \eta_i \eta_j \left(\frac{2dx_{0,i} p_i + 2dx_{0,j} p_j - 2p\cdot x_0 + da}{d^2 x_{0,i} x_{0,j}} - \delta_{ij} \frac{a}{x_{0,i}^2}   \right) \\
    &= \frac{2}{d} \sum_{i,j=1}^d \eta_i \eta_j \left(\frac{p_i}{x_{0,j}} + \frac{p_j}{x_{0,i}} - \frac{p\cdot x_0}{d x_{0,i} x_{0,j}}     \right) + \frac{a}{d} \sum_{i,j=1}^d \frac{\eta_i \eta_j}{x_i x_j} - a\sum_{i=1}^d \frac{\eta_i^2}{x_{0,i}^2} \\
    &= \frac{2}{d}(\eta \cdot x_0^{-1}) \left( 2(\eta \cdot p) - \frac{1}{d}(p\cdot x_0)(\eta \cdot x_0^{-1}) \right) + \frac{a}{d}(\eta \cdot x_0^{-1})^2 - a\norm{\eta \cdot x_0^{-1}}^2 \\
    &\geq -C_d \left(\abs{p}\abs{x_0^{-1}} + \abs{p}\abs{x_0}\abs{x_0^{-1}}^2 + a\abs{x_0^{-1}}^2 \right)
\end{align*}
Using that $\min_{x\in \ov{\Omega}_{1,M}} x_i = M^{-d+1}$, we have
\begin{align*}
    2a^{\frac{d-1}{d}}\eta^{\top} (D^2 \ov{u}(x_0)) \eta &\geq - C_d \left(M^{2d-1}\abs{p} + aM^{2d-2} \right).
\end{align*}
(b) Let $a=1$, $\eta = e_1$, and define $x_0 \in \partial_{1,M} \Omega$ by $x_{0,1} = \frac{1}{M^{d-1}}$, $x_{0,j} = M$ for $j=2,\ldots ,d$ and $p = -\frac{v}{\norm{v}}$ where $v = 2e_1 - \frac{x_0}{dx_{0,1}}$. Then $\left( M^{2d-1} \abs{p} + aM^{2d-2} \right) \leq 2 M^{2d-1}$. We have
\begin{align*}
    \eta^{\top} (D^2 \ov{u}(x_0)) \eta &= \frac{2}{dx_{0,1}} p\cdot \left(2e_i - \frac{x_0}{dx_{0,1}} \right) - \frac{1-(1/d)}{x_{0,1}^2} \\
    &\leq  -\frac{2}{d}M^{d-1} \norm{v} \\
    &= -\frac{2}{d}M^{d-1} \sqrt{(2-d^{-1})^2 + (d-1)d^{-1}M^{2d}} \\
    &\leq -C_d M^{2d-1}. \qedhere
\end{align*}
\end{proof}
\begin{remark}
Result (a) establishes an upper bound on the semiconvexity constant of $\ov{u}$, while result (b) shows that this is the best bound (up to a constant $C_d$) that can hold without additional restrictions on $\rho$. In (a)  if we assume also that $p\cdot x_0 \leq 0$ the result can be improved to 
\begin{align*}
        \eta^{\top} (D^2 \ov{u}(x_0)) \eta \geq - C_d (a^{-\frac{d-1}{d}}\abs{p} M^{d-1} + a^{1/d} M^{2d-2}).
\end{align*}
\end{remark}
\begin{proof}[Proof of Theorem \ref{thm:mainsemiconvexity}]
We will prove the result in two steps, first considering the $R=1$ case, and then proving the general case using a scaling argument. Given $M \geq 1$, let $u$ denote the solution of 
\begin{equation}\label{eq:PDEsemiR1}
\left\{\begin{aligned}
(u_{x_1}u_{x_2}\ldots u_{x_d})^{1/d}  &=  \rho^{1/d}&&\text{in }\Omega_{1,2M}\\
u &= 0 &&\text{on } \partial_{1,2M} \Omega.
\end{aligned}\right.
\end{equation}
Letting $w(x) = d\rhomax^{1/d} (x_1\ldots x_d)^{1/d} - d\rhomax^{1/d}$, we have $w = 0$ on $\partial_{1,2M}$ and $(w_{x_1}\ldots w_{x_d})^{1/d} = \rhomax^{1/d} \geq u$ on $\Omega_{1,2M}$. By Theorem \ref{thm:maximumprinciple} we have $u \leq w$ on $\Omega_{1,2M}$, hence $\norm{u}_{\Omega_{1,2M}} \leq C_d M \rhomax^{1/d}$. Given $\vare>0$, let $h\in \R^d$ with $\abs{h} = \vare$ and set $\Gamma_\vare = \set{x\in \Omega_{1,M}: \dist(x,\partial_{1,M} \Omega) < \vare}$ and $U_\vare = \Gamma_{3\vare} \setminus \Gamma_\vare$. Given $x\in U_\vare$, there exists $x_0 \in \partial_{1,M} \Omega$ such that $x\in B(x_0,3\vare)$. Let $\ov{u}$ be as in Proposition \ref{prop:ubarapproximation}, with $a = \rho(x_0)$ and $p = D\rho(x_0)$. By Proposition \ref{prop:ubarapproximation}, there exists a constant $C_{d,\rho,M} > 0$ such that when $\vare \leq C_{d,\rho, M}$ we have
\begin{align*}
    \sup_{B(x_0,3\vare)\cap \Omega_{1,M}}\abs{u - \ov{u}} \leq C_{d,\rho,M} \vare^3.
\end{align*}
By Lemma \ref{lem:ubarsemi} (a) we have
\begin{align*}
    \eta^{\top} D^2 \ov{u}(x_0)\eta \geq -C_d K_{\ov{u}}
\end{align*}
where 
\begin{align*}
    K_{\ov{u}} = \rhomin^{-(d-1)/d}\norm{D\rho}_{L^\infty(\partial_{1,M}\Omega)} M^{2d-1} + \rhomax^{1/d} M^{2d-2}.
\end{align*}
As $\ov{u}$ is smooth, there exists a constant $C_{d,\rho,M} > 0$ such that
\begin{align*}
    \inf_{y\in B(x_0,3\vare)}\eta^{\top} D^2 \ov{u}(y)\eta \geq -C_d K_{\ov{u}} - C_{d,\rho,M} \vare.
\end{align*}
and it follows that
\begin{align*}
    \ov{u}(x+h) - 2\ov{u}(x) + \ov{u}(x-h) \geq -\abs{h}^2 (C_d K_{\ov{u}} + C_{d,\rho,M} \vare).
\end{align*}
Hence, we have
\begin{align*}
    \frac{u(x+h) - 2u(x) + u(x-h)}{\abs{h}^2} &\geq \frac{\ov{u}(x+h) - 2\ov{u}(x) + \ov{u}(x-h)}{\abs{h}^2}- 4\vare^{-2}\sup_{B(x_0,3\vare)\cap \Omega_{1,M}}\abs{u - \ov{u}} \\ &\geq - (C_d K_{\ov{u}} + C_{d,\rho,M} \vare)
\end{align*}
This holds for all $x\in U_\vare$, hence also for all $x\in \ov{U_\vare}$. Letting $\Omega = \Omega_{1,M}\setminus \ov{\Gamma_\vare}$ and $\Gamma = \set{x\in \Omega_{1,M} : \dist(x,\partial_{1,M}\Omega) = \vare}$, we have $\partial \Omega = \Gamma \cup \partial^\ast \Omega$ and $\set{y \in \R^d : \dist(y,\Omega) < \vare} \subset \Omega_{1,2M}$ for $\vare \leq M$. As $\rho^{1/d}$ is semiconvex on $\Omega_{1,M}$ with semiconvexity constant $\norm{D^2 (\rho^{1/d})}_{L^\infty(\Omega_{1,M})}$, we may apply Theorem \ref{thm:semiconvexboundsplitdomain} to conclude that for all $x\in \Omega_{1,M} \setminus \ov{\Gamma_\vare}$ we have
\begin{align*}
    \frac{u(x+h) - 2u(x) + u(x-h)}{\abs{h}^2} &\geq -C_d(1 + \norm{u}_{L^\infty(\Omega_{1,2M})}) (K_{u} + C_{d,\rho,M}\vare) \\
    &\geq -C_d (1+M\rhomax^{1/d}) (K_{u} + C_{d,\rho,M}\vare)
\end{align*}
where 
\begin{align*}
    K_{u} = \max\left( K_{\ov{u}}, \rhomin^{-1/d}\norm{D^2 (\rho^{1/d})}_{L^\infty(\Omega_{1,M})} \right).
\end{align*}
As this holds for all $\vare < C_{d,\rho,M}$ and $h\in \R^d$ with $\abs{h}=\vare$, we conclude that for all $x\in \Omega_{1,M}$ and $h \in \R^d$ with $x\pm h \in \Omega_{1,M}$ we have
\begin{align*}
    \frac{u(x+h) - 2u(x) + u(x-h)}{\abs{h}^2} \geq -C_d (1+M \rhomax^{1/d}) K_{u}.
\end{align*}
Now we prove Theorem \ref{thm:mainsemiconvexity} in full generality. Let $u$ denote the solution of $\eqref{eq:PDEsemi}$ and let $q(x) = u(Rx)$. Then $q$ satisfies
\begin{equation*}
\left\{\begin{aligned}
q_{x_1}q_{x_2}\ldots q_{x_d}  &=  g&&\text{in } \Omega_{1, R^{-1}M}\\ 
q &= 0 &&\text{on } \partial_{1,R^{-1}M} \Omega
\end{aligned}\right.
\end{equation*}
where $g = R^d \rho(Rx)$. By our $R = 1$ result, $q$ satisfies
\begin{align*}
    \frac{q(x+h) - 2q(x) + q(x-h)}{\abs{h}^2} &\geq -C_d (1+M \rhomax^{1/d}) \max\left( K, R^2\rhomin^{-1/d}\norm{D^2 (\rho^{1/d})}_{L^\infty(\Omega_{R,M})} \right)
\end{align*}
where $K = R^{-2d+3}(E_1+E_2)$ with $E_1 = \rhomin^{-(d-1)/d}\norm{D\rho}_{L^\infty(\partial_{R,M}\Omega)} M^{2d-1}$ and $E_2 = \rhomax^{1/d} M^{2d-2}$. Then there exists a constant $C_\rho > 0$ such that when $R \leq C_\rho$ we have
\begin{equation*}
     \frac{q(x+h) - 2q(x) + q(x-h)}{\abs{h}^2} \geq -C_d(1+M\rhomax^{1/d})R^{-2d+3}\left(E_1 + E_2\right).
\end{equation*}
Hence for all $y\in \Omega_{1, R^{-1} M}$ and $h'\in \R^d$ with $h'\neq 0$, we have
\begin{align*}
    \frac{u(Ry+Rh') - 2u(Ry+Rh') + u(Ry-Rh')}{\abs{Rh'}^2} \geq -C_d(1+M\rhomax^{1/d})R^{-2d+1}\left(E_1 + E_2\right)
\end{align*}

Replacing $Rh'$ with $h$ and $Ry$ with $x$, we conclude that for all $x\in \Omega_{R,M}$ and $h \neq 0$ we have
\begin{equation*}
        \frac{u(x+h) - 2u(x) + u(x-h)}{\abs{h}^2} \geq -C_d(1+M\rhomax^{1/d})R^{-2d+1}\left(E_1 + E_2 \right). \qedhere
\end{equation*}
\end{proof}


\end{document}